\numberwithin{equation}{section}
\theoremstyle{plain}
\newtheorem{lemma}{Lemma}[section]
\newtheorem{prop}[lemma]{Proposition}
\newtheorem{theorem}[lemma]{Theorem}
\theoremstyle{definition}
\newtheorem{defn}[lemma]{Definition}
\newtheorem{eg}[lemma]{Example}
\theoremstyle{remark} 
\newtheorem{remark}[lemma]{Remark}
\newcommand{\bL}{\mathbf L}
\newcommand{\bZ}{\mathbf Z}
\newcommand{\mcV}{\mathcal V}
\newcommand{\mcW}{\mathcal W}
\newcommand{\mcX}{\mathcal X}
\newcommand{\sfC}{\mathsf{C}}
\newcommand{\sfD}{\mathsf{D}}
\newcommand{\sfK}{\mathsf{K}}
\newcommand{\sfS}{\mathsf S}
\newcommand{\sfT}{\mathsf T}
\newcommand{\fc}{\mathfrak{c}}
\newcommand{\fm}{\mathfrak{m}}
\newcommand{\fp}{\mathfrak{p}}
\newcommand{\fq}{\mathfrak{q}}
\newcommand{\cl}{\operatorname{cl}}
\newcommand{\End}{\operatorname{End}}
\newcommand{\Ext}{\operatorname{Ext}}
\renewcommand{\Gamma}{\varGamma}
\renewcommand{\Lambda}{\varLambda}
\newcommand{\fHom}{\operatorname{\mathcal{H}\!\!\;\mathit{om}}}
\newcommand{\Hom}{\operatorname{Hom}}
\newcommand{\Id}{\operatorname{Id}}
\newcommand{\KacInj}{\operatorname{\mathsf{K}_{\mathrm{ac}}\mathsf{Inj}}}
\newcommand{\Ker}{\operatorname{Ker}}
\newcommand{\KInj}{\operatorname{\mathsf{KInj}}}
\newcommand{\Coloc}{\operatorname{Coloc}}
\newcommand{\Loc}{\operatorname{Loc}}
\newcommand{\sHom}{\underline\Hom}
\newcommand{\PHom}{\operatorname{PHom}}
\newcommand{\mmod}{\operatorname{\mathsf{mod}}}
\newcommand{\Mod}{\operatorname{\mathsf{Mod}}}
\newcommand{\one}{\mathds 1}
\newcommand{\proj}{\operatorname{\mathsf{proj}}}
\newcommand{\res}{\operatorname{res}}
\newcommand{\Spec}{\operatorname{Spec}}
\newcommand{\stmod}{\operatorname{\mathsf{stmod}}}
\newcommand{\StMod}{\operatorname{\mathsf{StMod}}}
\newcommand{\cosupp}{\operatorname{cosupp}}
\newcommand{\supp}{\operatorname{supp}}
\title{Module categories for finite group algebras}
\author{David J. Benson, Srikanth B. Iyengar and Henning Krause}
\begin{document}

\begin{abstract}
This survey article is intended as an introduction to the recent categorical classification theorems of the three authors, restricting to the special case of the category of modules for a finite group.
\end{abstract}

\begin{classification}
Primary 20J06; Secondary 20C20, 13D45, 16E45, 18E30
\end{classification}

\begin{keywords}
Modular representation theory, 
local cohomology, 
stable module category,
idempotent modules, 
derived category, 
triangulated categories,
local-global principle,
stratification,
costratification.
\end{keywords}

\section*{Introduction}

The purpose of this paper is to explain the recent work of the three
authors on the classification of localising and colocalising
subcategories of triangulated categories
\cite{Benson/Iyengar/Krause:2008a, Benson/Iyengar/Krause:bik2,
  Benson/Iyengar/Krause:bik3, Benson/Iyengar/Krause:bik4}, and to put
it into context. In order to be as concrete as possible, we restrict
our attention to categories associated with the modular representation
theory of finite groups. A more leisurely discussion, filling in
requisite background from commutative algebra and triangulated
categories, is given in \cite{Benson/Iyengar/Krause:bik6}.

Let $G$ be a finite group and let $k$ be an algebraically closed 
field of characteristic $p>0$.
With a few exceptions in characteristic two, if the Sylow $p$-subgroups
of $G$ are non-cyclic then the group algebra $kG$ has wild representation
type. We therefore do not hope to classify the indecomposable
finitely generated $kG$-modules, and so we are left with several options.
We can restrict the kinds of modules that we're interested in;
we can look for properties of modules that we can prove without
the need for a classification; or we look for coarser classification
theorems. Examples abound of all three types of approach; we shall
be examining the last option.

The first indication that it might be possible to make some sort of
categorical classification theorem came with Mike Hopkins' 1987
classification \cite{Hopkins:1987a} of the thick subcategories of the
derived category $\sfD^b(\proj(R))$ of perfect complexes over a
commutative Noetherian ring $R$. His interest was in the nilpotence
theorem for the stable homotopy category, for which he regarded the
derived category of perfect complexes as a toy model. The
parametrisation of the thick subcategories was by specialisation
closed sets of prime ideals in $R$.

Neeman's 1992 paper \cite{Neeman:1992a} took up Hopkins' work, 
clarified it and went on to classify the localising subcategories of the 
unbounded derived category of $R$-modules $\sfD(\Mod(R))$.
This is the appropriate big 
category whose compact objects are the perfect complexes
$\sfD^b(\proj(R))$.
The parametrisation for the localising subcategories was by 
\emph{all} subsets
of the set of prime ideals of $R$, not just the specialisation
closed ones.

In 1997, the first author together with Carlson and Rickard
\cite{Benson/Carlson/Rickard:1997a} proved the analogue of Hopkins'
theorem for the stable category $\stmod(kG)$ of finitely generated
$kG$-modules, over an algebraically closed field $k$ of characteristic
$p$.  The parametrisation for the thick subcategories was by
specialisation closed subsets of homogeneous primes in the cohomology
ring $H^*(G,k)$, ignoring the maximal ideal of all positive degree
elements.

The corresponding big category is the stable category 
$\StMod(kG)$ of all $kG$-modules; the full subcategory of 
compact objects in this 
is the finitely generated stable module category $\stmod(kG)$.
It was expected by analogy with $\sfD(\Mod(R))$ that the 
classification of localising subcategories of $\StMod(kG)$ would
be by all subsets of the set of non-maximal homogeneous primes in
$H^*(G,k)$. But this turned out to be very hard to prove, and it
was not until a couple of years ago that we
managed to achieve this classification \cite{Benson/Iyengar/Krause:bik3}.

More recently, in 2009 Neeman \cite{Neeman:coloc} classified the
colocalising subcategories of $\sfD(\Mod(R))$; the corresponding
classification for $\StMod(kG)$ is given in
\cite{Benson/Iyengar/Krause:bik4}.

Other classifications follow similar models.  The work of Hovey,
Palmieri and Strickland \cite{Hovey/Palmieri/Strickland:1997a} and the
series of papers
\cite{Benson/Iyengar/Krause:2008a,Benson/Iyengar/Krause:bik2,
  Benson/Iyengar/Krause:bik3,Benson/Iyengar/Krause:bik4} lay the
general foundations. For background on the theory of support varieties
we refer to Solberg's survey \cite{Solberg:2006a}.
\bigskip

\begin{center}
\sc Contents.
\end{center}


\ \,1. The stable module category

\ \,2. Thick subcategories of $\stmod(kG)$

\ \,3. The derived category

\ \,4. Rickard idempotent modules and functors

\ \,5. Classification of tensor ideal thick subcategories

\ \,6. Localising subcategories of $\StMod(kG)$

\ \,7. The category $\KInj(kG)$

\ \,8. Support for triangulated categories

\ \,9. Tensor triangulated categories

10. The local-global principle

11. Stratifying triangulated categories

12. Graded polynomial algebras

13. The BGG correspondence

14. The Koszul construction

15. Quillen stratification

16. Applications

17. Costratification

\section{The stable module category}

We write $\Mod(kG)$ for the module category of $kG$. The objects are
the left $kG$-modules and the morphisms are the module
homomorphisms. This is an abelian category. We write $\mmod(kG)$ for
the full subcategory of finitely generated $kG$-modules. We refer the
reader to \cite{Benson:1991a} for basic constructions and facts
concerning $kG$-modules. The following statement is well known for
finitely generated modules, but is true more generally.

\begin{lemma}\label{le:pr=inj}
Projective and injective $kG$-modules coincide.
\end{lemma}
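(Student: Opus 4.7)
The approach is to establish that $kG$ is a Frobenius algebra---specifically, injective as a module over itself---and then upgrade from finitely generated to arbitrary modules using that $kG$ is Noetherian. My first step is to exhibit a nondegenerate associative $k$-bilinear form on $kG$, for example the form $(x,y) \mapsto \epsilon(xy)$ where $\epsilon\colon kG \to k$ picks out the coefficient of the identity with respect to the group basis. Its adjoint yields an isomorphism $kG \cong \Hom_k(kG, k)$ of left $kG$-modules. Since the functor $\Hom_k(-, k)$ sends a projective right $kG$-module to an injective left $kG$-module, and $kG$ is free as a right $kG$-module, this identifies $kG$ as an injective module over itself.

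From this, the implication ``projective $\Rightarrow$ injective'' is straightforward. Any projective $kG$-module is a summand of a free module $\bigoplus kG$. As $kG$ is finite dimensional over $k$, it is Noetherian, so arbitrary direct sums of injective modules are injective. Hence $\bigoplus kG$ is injective, and so is every summand.

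For the converse, I would invoke the structure theorem for injective modules over a Noetherian ring: every injective $kG$-module is a direct sum of injective hulls of simple modules. Since $kG$ is also Artinian and self-injective, each indecomposable projective $kG$-module has a simple socle and agrees with the injective hull of that socle; thus the indecomposable injectives coincide with the indecomposable projectives. Consequently every injective module is a direct sum of indecomposable projectives, and hence is itself projective. The main obstacle is this second direction for infinitely generated modules, which does not follow from formal duality and relies on these structural facts about injective modules over Noetherian (and in fact Artinian) rings.
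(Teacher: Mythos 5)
Your proof is correct, and its first half (self-duality of $kG$ via the form $(x,y)\mapsto\epsilon(xy)$, hence projective $\Rightarrow$ injective, with the Noetherian hypothesis invoked to handle infinite direct sums of injectives) is essentially the paper's argument, made slightly more explicit. Where you genuinely diverge is the converse. The paper avoids all structure theory of injectives: it embeds an arbitrary module $M$ into the \emph{free} module $kG\otimes_k M$ (with $G$ acting only on the left factor) via the explicit $kG$-linear map $m\mapsto\sum_{g\in G}g\otimes g^{-1}m$; if $M$ is injective this embedding splits, so $M$ is a summand of a free module and hence projective. You instead invoke Matlis' theorem that over a Noetherian ring every injective module decomposes into indecomposable injectives, identify these (using that $kG$ is Artinian and self-injective) with the indecomposable projectives via their simple socles, and conclude. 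That works, but it is heavier machinery, and the step ``thus the indecomposable injectives coincide with the indecomposable projectives'' silently uses that every injective hull of a simple actually occurs as an indecomposable projective --- i.e.\ the Nakayama-permutation/counting argument that $P\mapsto$ socle of $P$ is a bijection onto the simples; you should say this. What your route buys is a complete description of the injective $kG$-modules (direct sums of indecomposable projectives), which is more than the lemma asks; what the paper's route buys is a two-line, self-contained argument needing nothing beyond the split embedding into a free module, which is why it works verbatim for arbitrary (not finitely generated) modules without appealing to Matlis.
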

\begin{proof}
It is an easy exercise using the group basis to show that $kG$ is
self-dual as a $kG$-module. This implies that every projective 
module is injective. For the converse, we note that every
$kG$-module $M$ embeds in a free $kG$-module, via the
map $M \to kG\otimes_k M$ sending $m$ to $\sum_{g\in G}g\otimes g^{-1}m$. 
Here, we make $kG\otimes_k M$ into a
$kG$-module via $g(g'\otimes m)=gg'\otimes m$; thus a $k$-basis
of $M$ gives a free $kG$-basis for $kG\otimes_k M$. If $M$ is
injective then this embedding splits and $M$ is a summand of a free module,
hence projective.
\end{proof}

Projective $kG$-modules are well understood. Every projective is a
direct sum of finitely generated projective indecomposables, and every
projective indecomposable is the projective cover of a simple module.

To work ``modulo projectives'', we use the \emph{stable module category}
$\StMod(kG)$.
This has the same objects as the module category, but the morphisms
are given by
\[ \sHom_{kG}(M,N)=\Hom_{kG}(M,N)/\PHom_{kG}(M,N) \]
where $\PHom_{kG}(M,N)$ is the linear subspace of maps that factor
through some projective module.

We write $\stmod(kG)$ for the full subcategory of $\StMod(kG)$ 
whose objects are the modules stably isomorphic to finitely generated modules. 
Notice that if
a map between finitely generated modules factors through some projective
module then it factors through a finitely generated projective module.

The subcategory $\stmod(kG)$ is 
distinguished categorically as the \emph{compact objects}
in $\StMod(kG)$.

\begin{defn}
An object $M$ in $\StMod(kG)$ 
is said to be \emph{compact} if given any small
coproduct $\bigoplus_\alpha M_\alpha$, the natural map
\[ \bigoplus_\alpha\sHom_{kG}(M,M_\alpha)\to
\sHom_{kG}(M,\bigoplus_\alpha M_\alpha) \]
is an isomorphism.
\end{defn}

Although $\StMod(kG)$ and $\stmod(kG)$ are not abelian categories,
they come with a natural structure of triangulated category.
The ``shift'' is given by $\Omega^{-1}$, the functor assigning
to each module $M$ the cokernel of an embedding of $M$ into an injective
module. At the level of $\Mod(kG)$ this is not functorial, but for
$\StMod(kG)$ it is a functorial self-equivalence whose inverse is the
functor $\Omega$ taking a module $M$ to the kernel of a surjection from
a projective module onto $M$.

The distinguished triangles
\[ A \to B \to C \to \Omega^{-1}(A) \]
are by definition those isomorphic to diagrams coming from a
short exact sequence 
\[ 0\to A\to B \to C\to 0 \] 
of modules as follows. Given such a short exact sequence, we embed $B$
into an injective module $I$ and obtain the following commutative
diagram with exact rows:
\[ \xymatrix{0\ar[r]&A\ar[r]\ar@{=}[d]&B\ar[r]\ar[d]&C\ar[r]\ar[d]&0\\
0\ar[r]&A\ar[r]&I\ar[r]&\Omega^{-1}(A)\ar[r]&0.} \] This gives us a
map $C\to \Omega^{-1}A$ which is well defined in $\StMod(kG)$.

The right hand square in this diagram gives us a short exact sequence
\[ 0 \to B \to C\oplus I \to \Omega^{-1}(A) \to 0. \]
Since $C$ is isomorphic to $C\oplus I$ in $\StMod(kG)$,
applying the construction again gives rise to the rotated triangle
\[ B\to C\to \Omega^{-1}(A) \to \Omega^{-1}(B). \]

Most of the axioms for a triangulated category are easy to verify.
The only one that needs any comment is the octahedral axiom,
which is the expression in $\StMod(kG)$ of the
third isomorphism theorem in $\Mod(kG)$. More details can be
found in Buchweitz \cite{Buchweitz:1986}, and in Happel \cite{Happel:1988a}.

\section{Thick subcategories of $\stmod(kG)$}

\begin{defn}
For any triangulated category $\sfT$, a \emph{thick subcategory} $\sfS$
is a full triangulated subcategory that is closed under taking finite direct
sums and summands.
\end{defn}

It is worth spelling out what this means for the stable module category.
Being a full triangulated subcategory means that the subcategory is
closed under $\Omega$ and $\Omega^{-1}$, and it has the 
\emph{two in three property}: given a triangle in the stable module
category for which two of the objects are in $\sfS$, so is the third.

Pulling back to $\mmod(kG)$,
thick subcategories of $\stmod(kG)$ are in one to one correspondence
with the subcategories $\fc$ 
of $\mmod(kG)$ with the following properties:\begin{enumerate}
\item All projective modules are in $\fc$.
\item $\fc$ is closed under finite sums and summands.
\item If $M$ is in $\fc$ then so are $\Omega(M)$ and $\Omega^{-1}(M)$.
\item If $ 0\to A\to B\to C\to 0$ is a short exact sequence of modules
  with two of $A$, $B$ and $C$ in $\fc$ then so is the third.
\end{enumerate}

The clue to finding thick subcategories of $\stmod(kG)$ comes from the
theory of varieties for modules, which we now briefly describe. A
fuller treatment can be found in \cite{Benson:1991b}. The cohomology
ring $H^*(G,k)$ is defined to be $\Ext^*_{kG}(k,k)$ where $k$ denotes
the trivial representation. This is a finitely generated graded
commutative ring, and we write $V_G$ for its maximal ideal spectrum.

If $M$ is a finitely generated $kG$-module, we have a ring homomorphism
\[ H^*(G,k)=\Ext^*_{kG}(k,k) \xrightarrow{M \otimes_k -} \Ext^*_{kG}(M,M) \]
given by tensoring Yoneda extensions with $M$. As usual, we are tensoring
over $k$ with diagonal group action, which is exact. We write $I_M$ for
the kernel of this homomorphism, and $V_G(M)$ for the subvariety of
$V_G$ determined by this ideal. Since $I_M$ is a homogeneous ideal,
$V_G(M)$ is a closed homogeneous subvariety of the homogeneous variety $V_G$.
The following theorem summarises some
of the main properties of these varieties; not all of them are easy
to prove.

\begin{theorem}
\label{th:VGM}
Let $M$, $M_1$, $M_2$, $M_3$ be finitely generated $kG$-modules.
\begin{enumerate}
\item $V_G(M)=\{0\}$ if and only if $M$ is a projective module.
\item More generally, the dimension of the variety $V_G(M)$
determines the polynomial rate of growth of 
a minimal resolution of $M$.
\item $V_G(\Hom_{k}(M,k))=V_G(M)$.
\item $V_G(M_1\oplus M_2)=V_G(M_1)\cup V_G(M_2)$.
\item $V_G(M_1\otimes_k M_2)=V_G(M_1)\cap V_G(M_2)$.
\item If $0\to M_1\to M_2 \to M_3 \to 0$ is a short exact sequence
then for $i=1,2,3$, $V_G(M_i)$ is contained in the union of the
varieties of the other two modules.
\item If $0\ne\zeta\in H^n(G,k)$ is represented by a cocycle
$\hat\zeta\colon\Omega^n(k)\to k$, we write $L_\zeta$ for the kernel
of $\hat\zeta$. Then we have
$V_G(L_\zeta)=V_G\langle\zeta\rangle$, the variety determined by
the principal ideal $\langle\zeta\rangle$.
\item Given a closed homogeneous subvariety $V\subseteq V_G$, there
  exists a finitely generated module $M$ with $V_G(M)=V$.  Namely, if
  $V=V_G\langle \zeta_1,\dots,\zeta_n\rangle$ then we may take
  $M=L_{\zeta_1}\otimes_k\dots\otimes_k L_{\zeta_n}$ and use
  properties {\rm (v)} and {\rm (vii)}.
\end{enumerate}
\end{theorem}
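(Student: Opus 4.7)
The plan is to arrange the eight parts by increasing difficulty: dispatch the formal items (iii), (iv), (vi) and the easy direction of (i) from standard cohomological inputs; derive the realization items (vii) and (viii) from the construction of $L_\zeta$; and treat the converse of (i), together with (ii) and (v), as the deep inputs, each resting on a reduction to elementary abelian $p$-subgroups.

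For (vi), apply $\Hom_{kG}(M_j,-)$ and $\Hom_{kG}(-,M_j)$ to the short exact sequence; the resulting long exact sequences of $H^*(G,k)$-modules show that a homogeneous $\zeta$ annihilating the identity maps of two of the $M_j$ must annihilate a power of the identity of the third, giving the stated containment of varieties. Part (iv) follows from (vi) applied to both orientations of the split sequence $0 \to M_1 \to M_1 \oplus M_2 \to M_2 \to 0$; equivalently, $I_{M_1\oplus M_2} = I_{M_1}\cap I_{M_2}$ by block decomposition of Ext. For (iii), the $k$-dual induces an isomorphism of $H^*(G,k)$-algebras $\Ext^*_{kG}(M,M) \cong \Ext^*_{kG}(M^*,M^*)$, matching the kernels. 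For the easy half of (i): if $M$ is projective then $\Ext^{>0}_{kG}(M,M) = 0$, so $I_M$ contains the irrelevant ideal and $V_G(M) = \{0\}$.

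For (vii), the composition $L_\zeta \hookrightarrow \Omega^n k \xrightarrow{\hat\zeta} k$ vanishes by construction, and interpreting this via Yoneda products gives $\zeta\cdot\id_{L_\zeta} = 0$ in $\Ext^*_{kG}(L_\zeta, L_\zeta)$, so $\zeta \in I_{L_\zeta}$ and $V_G(L_\zeta) \subseteq V_G\langle\zeta\rangle$. The reverse inclusion is more delicate; I would obtain it by combining (ii) with a direct analysis of the minimal resolution of $L_\zeta$, showing $\dim V_G(L_\zeta) = \dim V_G\langle\zeta\rangle$, or alternatively by computing the rank variety of $L_\zeta$ on elementary abelian subgroups and transporting back via Quillen stratification. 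With (v) and (vii) available, part (viii) is formal: any closed homogeneous subvariety has the form $V_G\langle\zeta_1,\ldots,\zeta_n\rangle$ for a generating set of its defining ideal, and iterating (v) and (vii) yields $V_G(L_{\zeta_1}\otimes\cdots\otimes L_{\zeta_n}) = \bigcap_i V_G\langle\zeta_i\rangle$.

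The substantial content lives in (ii) and (v). Part (ii), the identification of the complexity of $M$ with $\dim V_G(M)$, is the Alperin--Evens theorem; its proof passes through Quillen's description of $\Spec H^*(G,k)$ as an equivariant limit of spectra over elementary abelian $p$-subgroups, where complexity is controlled by the Jordan type of the restriction. The converse of (i) then follows, since $\dim V_G(M) = 0$ forces complexity zero, hence finite projective dimension, hence projectivity by self-injectivity (Lemma~\ref{le:pr=inj}). The main obstacle is (v). The easy inclusion $V_G(M_1\otimes M_2)\subseteq V_G(M_1)\cap V_G(M_2)$ is straightforward: $\Ext^*_{kG}(M_1\otimes M_2, M_1\otimes M_2)$ is a module over each $\Ext^*_{kG}(M_i,M_i)$ via the $i$-th tensor factor, so $I_{M_1}+I_{M_2}\subseteq I_{M_1\otimes M_2}$. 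The reverse inclusion is Carlson's tensor product conjecture, settled by Avrunin--Scott through the identification of cohomological and rank varieties for elementary abelian $p$-groups, where the tensor product formula is transparent from the behaviour of Jordan types under tensor product; Quillen stratification then transports the result to arbitrary $G$. This rank-variety identification is the sole genuinely difficult step.
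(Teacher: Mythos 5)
The paper itself offers no proof of this theorem: it is stated as a summary of known results ("not all of them are easy to prove") with a pointer to the literature, essentially Benson's \emph{Representations and Cohomology II} and the original sources (Alperin--Evens for (ii), Carlson for (vii), Avrunin--Scott for the hard inclusion in (v)). Your outline reproduces exactly that standard route, with correct attributions and correct formal arguments for (i), (iii), (iv), (vi), the easy halves of (v) and (vii), and (viii), so in substance you are following the same path the paper is implicitly citing.

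One step deserves a caution. For the reverse inclusion $V_G\langle\zeta\rangle\subseteq V_G(L_\zeta)$ in (vii), your first suggested argument --- deducing it from (ii) by showing $\dim V_G(L_\zeta)=\dim V_G\langle\zeta\rangle$ --- does not suffice: a closed subvariety of the hypersurface $V_G\langle\zeta\rangle$ of the same dimension need not be the whole hypersurface (it could be a single irreducible component when $V_G\langle\zeta\rangle$ is reducible), so a dimension count cannot pin down the variety. Your alternative route is the correct one and is the standard proof: compute the rank variety of $L_\zeta$ restricted to (shifted) cyclic subgroups of elementary abelian subgroups, where $\zeta$ restricting to zero forces the restriction of $L_\zeta$ to be non-projective, and transport back via Avrunin--Scott/Quillen stratification. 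Similarly, in the easy half of (vii) it is safer to claim only that some power of $\zeta$ lies in the kernel ideal $I_{L_\zeta}$ (which is all you need); the sharper statement that $\zeta$ itself annihilates $\Ext^*_{kG}(L_\zeta,L_\zeta)$ is true but requires a small diagram argument beyond the bare vanishing of the composite $L_\zeta\hookrightarrow\Omega^n(k)\xrightarrow{\hat\zeta}k$.
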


\begin{defn}
We write $\mcV_G$ for the collection of closed homogeneous irreducible
subvarieties of $V_G$ (including $\{0\})$, 
or equivalently the spectrum of homogeneous prime
ideals $\fp\subseteq H^*(G,k)$ (including the maximal ideal $\fm$ 
of positive degree elements). We say that a subset $\mcV\subseteq\mcV_G$ is
\emph{specialisation closed} if $\fp\in\mcV$, $\fq\in\mcV_G$
and $\fq\supseteq \fp$ imply $\fq\in\mcV$.

If $\mcV$ is a specialisation closed subset of $\mcV_G$ then we write
$\fc_\mcV$ for the full subcategory of $\stmod(kG)$ 
consisting of modules $M$ such that
$V_G(M)$ is a finite union of elements of $\mcV$ (i.e., each irreducible
component of $V_G(M)$ is in $\mcV$).
\end{defn}

\begin{lemma}\label{le:CV}
If $\mcV$ is a specialisation closed subset of $\mcV_G$ then 
$\fc_\mcV$ is a thick subcategory of $\stmod(kG)$.
\end{lemma}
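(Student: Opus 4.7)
The plan is to verify the four conditions (i)--(iv) from the paragraph preceding this lemma, which characterise the preimages in $\mmod(kG)$ of thick subcategories of $\stmod(kG)$; each condition about $\fc_\mcV$ becomes a statement about $V_G(M)$ that is controlled by Theorem~\ref{th:VGM}.

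For (i), $V_G(P)=\{0\}$ for every projective $P$ by Theorem~\ref{th:VGM}(i), and the single irreducible component $\{0\}$ corresponds to the maximal ideal $\fm$; since $\fm$ contains every homogeneous prime of $H^*(G,k)$, specialisation closure forces $\fm\in\mcV$ whenever $\mcV$ is nonempty (the empty case being trivial). For closure under $\Omega$ and $\Omega^{-1}$ in (iii), I would feed the defining sequences $0\to\Omega M\to P\to M\to 0$ and $0\to M\to I\to\Omega^{-1}M\to 0$ (with $P,I$ projective, and hence injective by Lemma~\ref{le:pr=inj}) into Theorem~\ref{th:VGM}(vi); combined with $V_G(P)=V_G(I)=\{0\}$, this shows that $V_G(M)$ and $V_G(\Omega^{\pm 1}M)$ have the same nontrivial irreducible components. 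For (ii), closure under finite direct sums is Theorem~\ref{th:VGM}(iv), and closure under summands uses the inclusion $V_G(M_1)\subseteq V_G(M_1\oplus M_2)$ supplied by the same statement. Finally, (iv) is Theorem~\ref{th:VGM}(vi): the variety of the third module in a short exact sequence is contained in the union of the varieties of the other two.

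In each of (ii), (iii) and (iv) the same elementary step reappears: one must pass from an irreducible component $V\in\mcV$ of some larger variety to an irreducible closed subvariety $W\subseteq V$ and conclude that $W\in\mcV$. This is where specialisation closure is essential, and it is the main obstacle in the sense that without this hypothesis $\fc_\mcV$ need not be thick. The justification is immediate on translating to prime ideals: $W\subseteq V$ reads $\fp_W\supseteq\fp_V$, and the defining property of a specialisation closed set gives $\fp_W\in\mcV$. With this the four conditions are established and $\fc_\mcV$ is a thick subcategory of $\stmod(kG)$.
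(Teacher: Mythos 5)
Your argument is correct and is exactly the paper's approach: the paper's proof consists of the single line that the lemma ``follows directly from Theorem~\ref{th:VGM}'', and what you have written is the routine verification of conditions (i)--(iv) that this leaves to the reader, with the one genuinely necessary observation --- that specialisation closure lets you pass from an element of $\mcV$ to any smaller irreducible closed subvariety (equivalently, to any larger homogeneous prime) --- correctly identified and used in the right places.
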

\begin{proof}
This follows directly from Theorem~\ref{th:VGM}.
\end{proof}

We now come to an issue which will reappear in later contexts. Namely,
the thick subcategories of $\stmod(kG)$ appearing in Lemma~\ref{le:CV}
are closed under tensor products with finitely generated $kG$-modules, by
part (iii) of Theorem~\ref{th:VGM}.

\begin{lemma}
If $\fc$ is a thick subcategory of $\stmod(kG)$ then the following 
are equivalent:
\begin{enumerate}
\item $\fc$ is closed under tensor product with 
finitely generated $kG$-modules.\smallskip
\item $\fc$ is closed under tensor product with simple 
$kG$-modules.
\end{enumerate}
These conditions are automatically satisfied if $G$ is a finite
$p$-group.
\end{lemma}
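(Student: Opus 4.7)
The implication (i) $\Rightarrow$ (ii) is immediate, since simple $kG$-modules are finitely generated. So the content is in (ii) $\Rightarrow$ (i), and my plan is to argue by induction on the composition length of the tensor factor.

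Fix $M \in \fc$ and let $N$ be a finitely generated $kG$-module. Choose a composition series
\[ 0 = N_0 \subset N_1 \subset \cdots \subset N_n = N \]
with simple quotients $S_i = N_i/N_{i-1}$. For each $i$, the short exact sequence $0 \to N_{i-1} \to N_i \to S_i \to 0$ of $kG$-modules, upon tensoring over $k$ with $M$ (which is an exact operation since we are working over a field, with the diagonal $G$-action), yields a short exact sequence
\[ 0 \to M \otimes_k N_{i-1} \to M \otimes_k N_i \to M \otimes_k S_i \to 0, \]
and hence a distinguished triangle in $\stmod(kG)$. The induction hypothesis places $M \otimes_k N_{i-1}$ in $\fc$, while $M \otimes_k S_i$ lies in $\fc$ by assumption (ii). The two-in-three property of the thick subcategory $\fc$ then forces $M \otimes_k N_i$ into $\fc$, and the induction closes.

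For the final assertion, recall that if $G$ is a finite $p$-group and $k$ has characteristic $p$, then the trivial module $k$ is the only simple $kG$-module (for instance, because the augmentation ideal is nilpotent, so every simple module is a quotient of $k$). Since $M \otimes_k k \cong M$, condition (ii) holds vacuously for any subcategory of $\stmod(kG)$, and therefore so does (i).

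I do not anticipate a real obstacle here: the only point that requires a moment's care is verifying that tensoring with $M$ preserves exactness of the composition series filtration, but that is automatic because the tensor product is taken over the field $k$.
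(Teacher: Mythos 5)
Your argument is correct and follows essentially the same route as the paper: the paper also deduces (i)$\Leftrightarrow$(ii) from the fact that every finitely generated module has a finite filtration with simple quotients (your composition-series induction just spells out the details of tensoring the filtration and using the two-in-three property), and it handles the $p$-group case by noting that $k$ is the only simple module. Nothing further is needed.
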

\begin{proof}
The equivalence of (i) and (ii) follows from the fact that
every finitely generated 
module has a finite filtration in which the filtered quotients
are simple modules.
If $G$ is a finite $p$-group then the trivial module is the only 
simple $kG$-module.
\end{proof}

\begin{defn}
We say that a thick subcategory of $\stmod(kG)$ is \emph{tensor ideal}, or \emph{tensor closed},
if the equivalent conditions of the lemma are satisfied.
\end{defn}

We can now state the classification theorem. This was proved
in Theorem~3.4 of Benson, Carlson and Rickard 
\cite{Benson/Carlson/Rickard:1997a} in case $k$ is an algebraically
closed field, and in Theorem~11.4 of 
Benson, Iyengar and Krause \cite{Benson/Iyengar/Krause:bik3}
for general fields $k$.

\begin{theorem}\label{th:thick}
There is a one to one correspondence between tensor ideal thick 
subcategories of $\stmod(kG)$ and non-empty specialisation closed subsets
of $\mcV_G$. Under this correspondence, $\mcV$ corresponds to $\fc_\mcV$.
\end{theorem}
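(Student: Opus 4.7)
The plan is to construct a two-sided inverse to $\mcV\mapsto\fc_\mcV$. That this map is well defined into tensor ideal thick subcategories follows from Lemma \ref{le:CV} combined with Theorem \ref{th:VGM}(v), since tensoring cannot enlarge varieties. For the inverse, given a tensor ideal thick subcategory $\fc$, set
$$\mcV(\fc)=\bigl\{\fp\in\mcV_G : \fp \text{ is an irreducible component of } V_G(M) \text{ for some } M\in\fc\bigr\}.$$
Parts (v) and (vii) of Theorem \ref{th:VGM} show that $\mcV(\fc)$ is specialisation closed: if $\fp\in\mcV(\fc)$ is witnessed by $M\in\fc$ and $\fq\supseteq\fp$, pick homogeneous generators $\zeta_1,\dots,\zeta_n$ of $\fq$; then $M\otimes_k L_{\zeta_1}\otimes_k\cdots\otimes_k L_{\zeta_n}$ lies in $\fc$ because $\fc$ is tensor ideal, and it has variety $V_G\langle\zeta_1,\dots,\zeta_n\rangle$, the irreducible subvariety corresponding to $\fq$. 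The identity $\mcV(\fc_\mcV)=\mcV$ is then routine, the nontrivial inclusion using Theorem \ref{th:VGM}(viii) to realise each $\fp\in\mcV$ as the variety of a single module in $\fc_\mcV$.

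The substantial content is the identity $\fc=\fc_{\mcV(\fc)}$. The inclusion $\fc\subseteq\fc_{\mcV(\fc)}$ is immediate. For the converse, given $M\in\fc_{\mcV(\fc)}$, taking a finite direct sum of witnesses for the irreducible components of $V_G(M)$ produces a single $N\in\fc$ with $V_G(M)\subseteq V_G(N)$. The problem thus reduces to proving the following \textbf{key lemma}: if $V_G(M)\subseteq V_G(N)$ for finitely generated $kG$-modules $M$ and $N$, then $M$ lies in the tensor ideal thick subcategory generated by $N$.

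My approach for the key lemma follows Benson--Carlson--Rickard: choose cohomology classes $\zeta_1,\dots,\zeta_r$ whose common vanishing locus equals $V_G(N)$, so that by parts (v) and (vii) of Theorem \ref{th:VGM} the Koszul-type module $K=L_{\zeta_1}\otimes_k\cdots\otimes_k L_{\zeta_r}$ satisfies $V_G(K)=V_G(N)$. A delicate induction using the triangles $L_{\zeta_i}\to\Omega^{|\zeta_i|}(k)\to k$ and repeated application of the tensor product theorem then shows $M$ belongs to the tensor ideal thick subcategory generated by $K$, while a similar argument shows $K$ itself lies in the tensor ideal thick subcategory generated by $N$.

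The main obstacle is Theorem \ref{th:VGM}(v), the tensor product theorem for varieties, which is the deepest input; for a general finite group it requires either a detour through rank varieties for elementary abelian $p$-subgroups (the Avrunin--Scott theorem), or, in the framework of the present survey, the local-global principle and the stratification results developed in the later sections.
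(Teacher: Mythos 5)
Your reduction to the key lemma is correct and is essentially the same skeleton as the paper's. The correspondence $\mcV\mapsto\fc_\mcV$, the inverse $\fc\mapsto\mcV(\fc)$, the verification that both composites are the identity modulo the identity $\fc=\fc_{\mcV(\fc)}$, and the further reduction of that to ``$V_G(M)\subseteq V_G(N)$ implies $M\in\langle N\rangle^\otimes$'' are all sound, and the half of the key lemma you do sketch (showing $M\in\langle K\rangle^\otimes$ when $K=L_{\zeta_1}\otimes\cdots\otimes L_{\zeta_r}$ and $V_G(M)\subseteq V(\zeta_1,\dots,\zeta_r)$) works in the way you indicate: each $\zeta_i$ vanishes on $V_G(M)$, so some power $\zeta_i^{n_i}$ annihilates $\Ext^*_{kG}(M,M)$, which splits the tensored triangle and exhibits $M$, up to shift, as a summand of $L_{\zeta_1^{n_1}}\otimes\cdots\otimes L_{\zeta_r^{n_r}}\otimes M$; combined with the elementary fact that $L_{\zeta^n}\in\langle L_\zeta\rangle^\otimes$, this gives what you want.

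The gap is in the other half, where you assert that ``a similar argument shows $K$ itself lies in the tensor ideal thick subcategory generated by $N$.'' This is not similar at all: it is exactly an instance of the key lemma itself (with $M$ replaced by $K$ and $V_G(K)=V_G(N)$), so the sketch is circular. Your argument for the first half used the very special structure of $K$ as a tensor of Carlson modules $L_\zeta$, and that structure lives on the side of the module you are trying to place in the thick subcategory; for the second half $N$ is an arbitrary module with the given variety, and no amount of fiddling with triangles $L_\zeta\to\Omega^{|\zeta|}(k)\to k$ lets you build $K$ out of $N$ by purely finite means. This is precisely where the Rickard idempotent modules enter, and why the paper devotes Section~4 to them before proving the theorem: Section~4 even flags explicitly that ``The proof of Theorem~\ref{th:thick} depends in an essential way on the construction of certain infinitely generated modules.'' The actual argument (both in Benson--Carlson--Rickard and in the streamlined form given in Section~5) shows that $\Gamma_\fc(N)\to\Gamma_\mcV(N)$ is a stable isomorphism for $\fc=\langle N\rangle^\otimes$, by proving $L_\fc\Gamma_\mcV(N)$ has empty support; it uses the big tensor product theorem (Theorem~\ref{th:cVGM}(iv), for possibly infinitely generated modules), the detection theorem, and the fact that compact objects in a localising subcategory generated by a compact object lie in the thick subcategory it generates. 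None of this appears in your sketch. Also, a minor point: you cite the finitely generated tensor product theorem, Theorem~\ref{th:VGM}(v), as the deepest input, but the idempotent-module route actually needs Theorem~\ref{th:cVGM}(iv) applied to modules like $L_\fc\Gamma_\mcV(N)$, which are infinitely generated.
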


Note that the condition that $\mcV$ is non-empty is equivalent to the
condition that $\mcV$ contains $\{0\}$. This subset plays no role for
$\stmod(kG)$ but will make an appearance later.

If we remove the tensor ideal condition, then the classification 
becomes much harder. 
One example of a thick subcategory which is
usually not tensor ideal is the full subcategory of modules in
the principle block.
The \emph{principal block} $B_0(kG)$ is the block containing the
trivial module. So one might ask whether the thick subcategories of
$\stmod(B_0(kG))$  are all of the form 
$\fc_\mcV \cap \stmod(B_0(kG))$.
The obstruction to this being true
is given in terms of the \emph{nucleus}, 
defined as follows.

\begin{defn}
The \emph{nucleus} $Y_G$ of a finite group $G$ is the union of the images of
$\res^*_{G,H}\colon V_H\to V_G$, as $H$ runs over the subgroups of
$G$ such that $C_G(H)$ is not $p$-nilpotent. Recall that a finite
group is said to be 
\emph{$p$-nilpotent} if it has a normal subgroup whose order is prime to $p$
and whose index is a power of $p$.
\end{defn}

\begin{theorem}[Benson, Carlson and 
Robinson \cite{Benson/Carlson/Robinson:1990a}; 
Benson \cite{Benson:1995a}]\label{th:nucleus}
The subvariety $Y_G$ of $V_G$ is equal to the union of the $V_G(M)$ as
$M$ runs over the finitely generated modules $M$ in $B_0(kG)$
satisfying $H^*(G,M)= 0$. 

In particular, every non-projective
module in the principal block has non-trivial cohomology if and only if the
centraliser of every element of order $p$ in $G$ is $p$-nilpotent.
\end{theorem}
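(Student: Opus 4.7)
The plan is to prove the main equality $Y_G = Z_G$, where $Z_G$ denotes the union of the varieties $V_G(M)$ as $M$ runs over finitely generated modules in $B_0(kG)$ with $H^*(G,M) = 0$, and then deduce the ``in particular'' clause as a formal consequence. For that deduction, Theorem~\ref{th:VGM}(i) rephrases ``every non-projective $M \in B_0(kG)$ has non-trivial cohomology'' as ``$Z_G = \{0\}$'', while $Y_G = \{0\}$ is equivalent to $V_H = \{0\}$ for every subgroup $H$ with $C_G(H)$ not $p$-nilpotent. Reducing this to the cyclic subgroups $H = \langle g\rangle$ generated by elements $g$ of order $p$, and using that subgroups of $p$-nilpotent groups are $p$-nilpotent, $Y_G = \{0\}$ is precisely the stated condition on centralisers of $p$-elements.

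For the inclusion $Z_G \subseteq Y_G$ I would take $M \in B_0(kG)$ with $H^*(G,M) = 0$ and $\fp \in V_G(M)$, and use Quillen stratification to write $\fp = \res^*_{G,E}(\fq)$ for some elementary abelian $p$-subgroup $E$ and some $\fq \in V_E(M)$; the task becomes to show that $C_G(E)$ cannot be $p$-nilpotent. Suppose it were; then $C_G(E)$ has a normal $p'$-subgroup $K$ with $C_G(E)/K$ a $p$-group containing $E$, and the Brauer construction, which takes $K$-fixed points of the restriction of $M$ to $C_G(E)$, produces a finitely generated module in the principal block of $N_G(E)/K$ whose variety in $V_E$ still contains $\fq$. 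A transfer-restriction argument on principal-block cohomology would then promote a non-zero class at $\fq$ in $H^*(E,M)$ to a non-zero class in $H^*(G,M)$, contradicting the hypothesis.

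For the reverse inclusion $Y_G \subseteq Z_G$, given $H$ with $C_G(H)$ not $p$-nilpotent and $\fp = \res^*_{G,H}(\fq)$, I would construct a module $M \in B_0(kG)$ with $H^*(G,M) = 0$ and $\fp \in V_G(M)$. The failure of $p$-nilpotence on $C_G(H)$ furnishes a non-projective principal-block $C_G(H)$-module with vanishing cohomology, coming for instance from a non-trivial fusion pattern on $p$-elements of $C_G(H)$; inducing up to $G$ preserves both membership in the principal block and vanishing of $H^*(G,-)$, and tensoring with a suitable product of Carlson's $L_\zeta$ modules (Theorem~\ref{th:VGM}(vii,viii)) trims the support so that $\fp$ lies in $V_G(M)$ while leaving $H^*(G,M) = 0$ intact. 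The main obstacle throughout is the principal-block bookkeeping: one must verify that block idempotents survive the various restrictions, inductions and Brauer constructions so that cohomological vanishing on a local subgroup actually persists on $G$ within $B_0(kG)$, rather than being absorbed into a non-principal block component. This is the technical heart of the arguments in \cite{Benson/Carlson/Robinson:1990a} and \cite{Benson:1995a}.
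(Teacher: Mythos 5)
First, note that the survey you are working from does not prove this theorem at all: it is quoted from Benson--Carlson--Robinson \cite{Benson/Carlson/Robinson:1990a} and Benson \cite{Benson:1995a}, so the benchmark is the arguments in those papers. Your reduction of the ``in particular'' clause to the displayed equality is fine: by Theorem~\ref{th:VGM}(i) the right-hand side meets $V_G\smallsetminus\{0\}$ exactly when there is a non-projective principal-block module with vanishing cohomology, and $Y_G\subseteq\{0\}$ is equivalent (via Evens--Venkov finite generation, so that $\res^*_{G,H}(V_H)=\{0\}$ iff $p\nmid|H|$, plus the fact that subgroups of $p$-nilpotent groups are $p$-nilpotent) to the condition on centralisers of elements of order $p$.

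The two inclusions, however, are not proved by your sketch. For $Z_G\subseteq Y_G$, the proposed ``transfer--restriction argument'' promoting a non-zero class in $H^*(E,M{\downarrow_E})$ to a non-zero class in $H^*(G,M)$ cannot work as stated: the composite $\mathrm{tr}^G_E\circ\res^G_E$ is multiplication by $[G:E]\equiv 0 \pmod p$, and $H^*(E,M{\downarrow_E})$ is non-zero for \emph{every} non-zero finitely generated $M$ (fixed points under a $p$-group are non-zero), so its non-vanishing yields no contradiction with $H^*(G,M)=0$. Moreover $M{\downarrow_{C_G(E)}}$ need not lie in the principal block of $C_G(E)$, so the Brauer-construction step also needs justification. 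The actual argument of \cite{Benson/Carlson/Robinson:1990a} is a genuinely delicate induction using, among other things, the structure of the principal block of a $p$-nilpotent group; none of that content is present here. For $Y_G\subseteq Z_G$ there are two gaps: (a) the existence of a non-projective principal-block $C_G(H)$-module with vanishing cohomology is exactly the crux and is only asserted (``coming for instance from a non-trivial fusion pattern''); and (b) induction does not preserve the principal block --- $M{\uparrow^G}$ spreads over many blocks. One can pass to the principal-block summand without losing the vanishing of $H^*(G,-)$, since $\Ext^*_{kG}(k,-)$ kills non-principal blocks, but then one must show that the chosen point $\fp$ survives in the variety of that summand; this is precisely the delicate point, and tensoring with $L_\zeta$'s afterwards can only shrink the support, so it cannot repair it. Benson's proof of this inclusion in \cite{Benson:1995a} in fact proceeds through Rickard's infinitely generated idempotent modules and the support theory for big modules of \cite{Benson/Carlson/Rickard:1996a}, not through a direct induction of a finitely generated module from $C_G(H)$. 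So while your skeleton (prove the equality, deduce the corollary) and some ingredients (Shapiro's lemma, Quillen stratification, $L_\zeta$-modules) are sensible, the substantive steps either fail as described or presuppose the theorems being proved.
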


\begin{theorem}[Benson, Carlson and 
Rickard \cite{Benson/Carlson/Rickard:1997a}]\label{th:no-nucleus}
If $Y_G$ is empty or equal to $\{0\}$ then there is a one to one correspondence
between the thick subcategories of $\stmod(B_0(kG))$ and the non-empty
subsets of $V_G$. Under this correspondence, $\mcV$ corresponds to
$\fc_\mcV\cap\stmod(B_0(kG))$.
\end{theorem}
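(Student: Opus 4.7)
The plan is to reduce the statement to the tensor-ideal classification of Theorem~\ref{th:thick}, using the nucleus hypothesis $Y_G\subseteq\{0\}$ to remove the need for the tensor-ideal assumption. First I would check that both assignments make sense. For non-empty specialisation-closed $\mcV\subseteq\mcV_G$ the intersection $\fc_\mcV\cap\stmod(B_0(kG))$ is thick by Lemma~\ref{le:CV}, and conversely, for a thick subcategory $\fc$ of $\stmod(B_0(kG))$ the set $\mcV(\fc):=\bigcup_{M\in\fc}V_G(M)$ is specialisation-closed by Theorem~\ref{th:VGM}(iv),(vi), and contains $\{0\}=V_G(0)$.

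The composite $\mcV\mapsto\fc_\mcV\cap\stmod(B_0(kG))\mapsto\mcV$ would be checked via Theorem~\ref{th:VGM}(viii): every closed homogeneous $V\subseteq V_G$ is realised as $V_G(M)$ for some Carlson-type module $M=L_{\zeta_1}\otimes_k\cdots\otimes_k L_{\zeta_n}$, and projecting $M$ onto the principal block by the block idempotent $e_0$ gives a module in $B_0(kG)$ whose variety is still $V$. The nucleus hypothesis enters here to guarantee that the principal-block summand retains the full variety rather than collapsing to a proper subset.

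For the reverse composite, the heart of the matter is the claim: if $\fc\subseteq\stmod(B_0(kG))$ is thick, $M\in\fc$, and $N\in\stmod(B_0(kG))$ satisfies $V_G(N)\subseteq V_G(M)$, then $N\in\fc$. The standard construction produces $N$ (up to projective summands) by iteratively forming triangles built from $M$ and Carlson modules $L_\zeta$: choosing $\zeta_1,\ldots,\zeta_n$ whose vanishing locus is $V_G(N)$, the module $M\otimes_k L_{\zeta_1}\otimes_k\cdots\otimes_k L_{\zeta_n}$ has variety $V_G(N)$ by Theorem~\ref{th:VGM}(v),(vii), and each $L_{\zeta_i}$ fits in a short exact sequence with $\Omega^{n_i}(k)$ and $k$, giving a finite sequence of triangles whose iterated cones relate $N$ to $M$. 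These triangles live a priori in $\stmod(kG)$; one must push them down into $\stmod(B_0(kG))$ by applying $e_0$.

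The main obstacle is precisely this push-down step, because arbitrary thick subcategories of $\stmod(B_0(kG))$ are not a priori closed under tensor products with simples outside the principal block. This is where Theorem~\ref{th:nucleus} does its work: under the hypothesis $Y_G\subseteq\{0\}$, every non-projective module in $B_0(kG)$ has non-trivial cohomology, so any module in $B_0(kG)$ whose variety reduces to $\{0\}$ is already projective and hence zero in $\stmod(B_0(kG))$. Consequently the $B_0$-projections of the triangles above remain triangles in $\stmod(B_0(kG))$ with end-terms of the correct variety, and the iterative construction can be carried out entirely inside $B_0(kG)$. This cohomological rigidity of the principal block is the essential new input beyond the proof of Theorem~\ref{th:thick}.
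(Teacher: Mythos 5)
The survey states this theorem without proof, citing Benson--Carlson--Rickard, so your attempt must be judged on its own; and its central step has a genuine gap. The triangles coming from $L_{\zeta_i}\to\Omega^{n_i}(k)\to k$ show only that $M\otimes_k L_{\zeta_1}\otimes_k\cdots\otimes_k L_{\zeta_n}$ lies in the thick subcategory generated by $M$; they give no mechanism whatsoever for passing from that module to $N$. Two finitely generated modules with the same variety are not, for any elementary reason, built from one another by finitely many triangles --- that implication \emph{is} the hard content of the classification. In the tensor ideal case (Section~5 of the paper, following \cite{Benson/Carlson/Rickard:1997a}) it is proved with the Rickard idempotent functors $\Gamma_\fc$ and $L_\fc$, the decisive step being that $\sHom_{kG}(S, M^*\otimes_k L_\fc\Gamma_\mcV(N))\cong\sHom_{kG}(S\otimes_k M, L_\fc\Gamma_\mcV(N))=0$ for \emph{all simple} modules $S$, which uses the tensor ideal hypothesis to keep $S\otimes_k M$ in $\fc$. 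Inside $\stmod(B_0(kG))$ one cannot tensor with simples outside the principal block and stay in the subcategory, and your sketch never explains what replaces this step; ``iterated cones relate $N$ to $M$'' is asserted, not proved, and applying $e_0$ to triangles does not create the needed triangles ending in $N$.

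The way you invoke the hypothesis $Y_G\subseteq\{0\}$ is moreover vacuous: ``any module in $B_0(kG)$ whose variety reduces to $\{0\}$ is projective'' is true for every finite group by Theorem~\ref{th:VGM}(i) and has nothing to do with the nucleus. The genuine role of the nucleus in the Benson--Carlson--Rickard argument is much stronger: one needs the extension of Theorem~\ref{th:nucleus} to \emph{infinitely generated} modules in the principal block (Benson \cite{Benson:1995a}, Benson--Carlson--Rickard \cite{Benson/Carlson/Rickard:1996a}), applied to Rickard idempotent modules such as (the principal block part of) $L_\fc\Gamma_\mcV(N)$: vanishing of $\sHom^*_{kG}(k,-)$ forces the support into $Y_G$, hence projectivity when $Y_G\subseteq\{0\}$. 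This is exactly the substitute for testing against all simples, since $k$ is the only simple available inside the principal block. Finally, your claim that the nucleus hypothesis is what keeps the variety of $e_0(L_{\zeta_1}\otimes_k\cdots\otimes_k L_{\zeta_n})$ from collapsing is misplaced: that holds unconditionally, because $H^*(G,L_{\zeta_1}\otimes_k\cdots\otimes_k L_{\zeta_n})$ is supported on all of $V_G\langle\zeta_1,\dots,\zeta_n\rangle$ and cohomology only sees the principal block component. So the one place you claim the nucleus is needed it is not, and the place it is really needed is absent from your argument.
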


If $Y_G$ is bigger than $\{0\}$ then it appears to be easy to manufacture
infinite collections of thick subcategories supported on each line
through the origin in $Y_G$. It appears hopeless to classify these
thick subcategories. However, if we work modulo the modules supported
inside the nucleus, we again obtain a classification of thick subcategories.

\section{The derived category}

The anomalous role of the origin in the classification of thick 
subcategories of $\stmod(kG)$ can be repaired by moving to a 
slightly bigger category, namely the \emph{derived category}
$\sfD^b(\mmod(kG))$. Recall that the objects in this category are
the finite chain complexes of finitely generated $kG$-modules, and the arrows
are homotopy classes of maps of complexes, with the quasi-isomorphisms
inverted. A \emph{quasi-isomorphism} is a map of complexes that induces
an isomorphism in homology. The category $\sfD^b(\mmod(kG))$ is a 
triangulated category, in which the triangles are formed using mapping
cones.

We write $\sfK^b(\proj(kG))$ for the thick subcategory of
$\sfD^b(\mmod(kG))$ whose objects are the \emph{perfect complexes},
i.e., the complexes quasi-isomorphic to a bounded complex of finitely
generated projective $kG$-modules. Buchweitz~\cite{Buchweitz:1986}
(see also Rickard \cite{Rickard:1989a}) has defined a functor from
$\sfD^b(\mmod(kG))$ to $\stmod(kG)$ which is essentially surjective
and has kernel $\sfK^b(\proj(kG))$:
\[ \sfK^b(\proj(kG)) \to \sfD^b(\mmod(kG)) \to \stmod(kG). \]

The thick subcategory $\sfK^b(\proj(kG))$ is the unique minimal tensor
ideal thick subcategory of $\sfD^b(\mmod(kG))$. This allows us to
extend the theory of support varieties to $\sfD^b(\mmod(kG))$ in such
a way that an object has the origin in its support if and only if
it is non-zero. As before, if $\mcV$ is a specialisation closed
subset of $\mcV_G$, we write $\fc_\mcV$ for the thick subcategory
of $\sfD^b(\mmod(kG))$ consisting of objects whose support is a finite
union of elements of $\mcV$.
So the following theorem is an easy consequence of 
Theorem~\ref{th:thick}.

\begin{theorem}
There is a one to one correspondence between tensor ideal thick 
subcategories of $\sfD^b(\mmod(kG))$ and specialisation closed subsets
of $\mcV_G$. Under this correspondence, $\mcV$ corresponds to $\fc_\mcV$.
\end{theorem}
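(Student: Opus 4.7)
The plan is to derive the theorem directly from Theorem~\ref{th:thick} by means of the Buchweitz--Rickard quotient
\[
\sfK^b(\proj(kG)) \longrightarrow \sfD^b(\mmod(kG)) \longrightarrow \stmod(kG)
\]
recalled above. By a standard Verdier-quotient argument, the map $\sfD^b(\mmod(kG)) \to \stmod(kG)$ induces a bijection between thick subcategories of $\sfD^b(\mmod(kG))$ containing $\sfK^b(\proj(kG))$ and thick subcategories of $\stmod(kG)$. Because $\sfK^b(\proj(kG))$ is itself tensor closed and the tensor product descends to the quotient, this bijection restricts to tensor ideal thick subcategories on either side.

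Next, I would separate the tensor ideal thick subcategories of $\sfD^b(\mmod(kG))$ into the zero subcategory and those containing $\sfK^b(\proj(kG))$, using the observation in the text that $\sfK^b(\proj(kG))$ is the unique minimal non-zero tensor ideal thick subcategory of $\sfD^b(\mmod(kG))$. Theorem~\ref{th:thick} then furnishes a bijection between the non-zero tensor ideal thick subcategories of $\sfD^b(\mmod(kG))$ and the non-empty specialisation closed subsets of $\mcV_G$; pairing the zero subcategory with $\emptyset$ handles the remaining case. Since a non-empty specialisation closed subset automatically contains the origin (the maximal ideal $\fm$ dominates every other homogeneous prime), the two cases together biject with all specialisation closed subsets of $\mcV_G$.

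To see that this bijection sends $\mcV$ to $\fc_\mcV$, I would invoke the extended support mentioned in the excerpt: every non-zero object of $\sfD^b(\mmod(kG))$ has the origin in its support, and away from the origin the support agrees with that of its image in $\stmod(kG)$. For non-empty $\mcV$, the subcategory $\fc_\mcV$ is therefore the preimage under the quotient functor of the subcategory of $\stmod(kG)$ attached to $\mcV$ by Theorem~\ref{th:thick}, while for $\mcV = \emptyset$ it is the zero subcategory. The only real bookkeeping is tracking this compatibility of supports, together with the tensor-ideal condition, across the quotient; there is no substantive obstacle, which is in keeping with the author's remark that the theorem is an ``easy consequence'' of Theorem~\ref{th:thick}.
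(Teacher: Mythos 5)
Your argument is correct and fills in exactly the details the paper intends when it calls this an ``easy consequence'' of Theorem~\ref{th:thick}: use the Verdier localisation $\sfD^b(\mmod(kG))\to\stmod(kG)$ to set up the bijection on thick subcategories containing $\sfK^b(\proj(kG))$, note that the tensor product descends across the quotient so the bijection respects tensor ideals, dispatch the zero subcategory to $\varnothing$ using the fact that $\sfK^b(\proj(kG))$ is the unique minimal nonzero tensor ideal thick subcategory, and verify that $\fc_\mcV$ in $\sfD^b(\mmod(kG))$ is the preimage of $\fc_\mcV$ in $\stmod(kG)$ for non-empty $\mcV$. Since the paper states but does not prove this theorem, there is no substantively different route to compare against; your proof is the expected one.
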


Next, we observe that $G$ is $p$-nilpotent if and only if $k$ is the
only simple module in the principal block. This allows us to extend
Theorem~\ref{th:no-nucleus} as follows.

\begin{theorem}
If $Y_G$ is empty (i.e., if $G$ is $p$-nilpotent) 
then there is a one to one correspondence
between the thick subcategories of $\sfD^b(\mmod(B_0(kG)))$ and 
subsets of $V_G$. Under this correspondence, $\mcV$ corresponds to
$\fc_\mcV\cap\sfD^b(\mmod(B_0(kG)))$.
\end{theorem}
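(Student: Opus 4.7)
The strategy is to reduce to Theorem~\ref{th:no-nucleus} via the Buchweitz/Rickard sequence restricted to the principal block,
\[
\sfK^b(\proj(B_0(kG))) \longrightarrow \sfD^b(\mmod(B_0(kG))) \longrightarrow \stmod(B_0(kG)),
\]
in which the right-hand functor is a Verdier quotient with kernel $\sfK^b(\proj(B_0(kG)))$. First I would exploit the $p$-nilpotence hypothesis: it forces $k$ to be the unique simple module in $B_0(kG)$, so the principal block is (Morita equivalent to) a local finite-dimensional $k$-algebra with a single indecomposable projective $P_k$. In particular, $\sfK^b(\proj(B_0(kG)))$ has exactly two thick subcategories, namely $0$ and itself.

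Next I would apply Theorem~\ref{th:no-nucleus}, whose hypothesis is satisfied since $Y_G$ is empty, to conclude that thick subcategories of $\stmod(B_0(kG))$ biject with non-empty subsets $\mcV \subseteq V_G$. Composing with the standard Verdier correspondence, these non-empty subsets are paired with the thick subcategories of $\sfD^b(\mmod(B_0(kG)))$ that contain $\sfK^b(\proj(B_0(kG)))$, and they are identified with the subcategories $\fc_\mcV \cap \sfD^b(\mmod(B_0(kG)))$: the extended support on $\sfD^b$ agrees with the stable support on non-perfect objects and distinguishes perfect complexes as those having support $\{0\}$.

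The key remaining step is a dichotomy: every thick subcategory $\fc \subseteq \sfD^b(\mmod(B_0(kG)))$ either contains $\sfK^b(\proj(B_0(kG)))$ or is contained in it. Combined with the first paragraph, this pins down $\fc$ to be either the zero subcategory or a member of the Verdier bijection above. To prove the dichotomy, assume $\fc$ contains a non-perfect object $X$; it suffices to show $P_k \in \fc$. Since $B_0(kG)$ is local Artinian, the total cohomology of $X$ has a composition series with all factors isomorphic to $k$, and chasing this filtration through triangles arising from short exact sequences of modules, together with the triangle $\Omega k \to P_k \to k$, produces first $k$ and then $P_k$ inside $\fc$. This is the point at which I expect the main difficulty: in $\sfD^b$ the Heller operator is available only through such triangles, not as an autoequivalence, so $P_k$ must be assembled from $X$ using only finitely many cones and summands of triangles between actual modules. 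Once the dichotomy is established, adjoining the empty subset (matched with the zero subcategory) extends the bijection to all subsets of $V_G$.
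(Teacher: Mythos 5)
Your skeleton (Buchweitz--Rickard quotient, triviality of thick subcategories of the perfect complexes, a dichotomy, then adjoining the empty set) is exactly the extension mechanism the paper intends, but your justification of the dichotomy --- which is where all the content sits --- runs backwards. From a non-perfect $X\in\fc$ you filter the cohomology of $X$ by copies of $k$ and claim this ``produces $k$ inside $\fc$''; d\'evissage goes the other way: that filtration shows $X\in\operatorname{thick}(k)$, i.e.\ that $k$ builds $X$, not that $X$ builds $k$. In fact the intermediate claim is false. Since $k$ is the unique simple in $B_0$, $\operatorname{thick}(k)$ is all of $\sfD^b(\mmod(B_0(kG)))$, so if every thick subcategory containing a non-perfect object contained $k$, the only thick subcategories would be $0$, $\sfK^b(\proj(B_0(kG)))$ and the whole category. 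Concretely, take $G=E=(\bZ/p)^2$ (so $B_0=kE$ and $Y_G=\varnothing$) and $X=L_\zeta$ for a non-nilpotent $\zeta$ of positive degree: $X$ is a non-projective module, hence non-perfect, but by Theorem~\ref{th:VGM} and Lemma~\ref{le:CV} every object of $\operatorname{thick}(L_\zeta)$ has variety inside the proper subvariety $V_E\langle\zeta\rangle$, whereas $V_E(k)=V_E$; so $k\notin\operatorname{thick}(L_\zeta)$. The correct mechanism is the one the paper's one-line remark points at: $p$-nilpotence gives a single simple in $B_0$ (indeed $B_0\cong kP$ via the central idempotent $|O_{p'}(G)|^{-1}\sum_{n\in O_{p'}(G)}n$), so every thick subcategory of $\sfD^b(\mmod(B_0(kG)))$ is closed under $X\mapsto e_0(X\otimes_k N)$ for finitely generated $N$ (filter $N$ by simples; the non-principal simples contribute nothing to the $B_0$-part). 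Applying this with $N=kG$ puts the nonzero perfect complex $e_0(X\otimes_k kG)$ into $\fc$, and that, not the production of $k$, yields the dichotomy.

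A secondary point: the assertion that $\sfK^b(\proj(B_0(kG)))$ has exactly two thick subcategories is not a formal consequence of $B_0$ being (Morita equivalent to) a local algebra --- over the local ring $k[x]_{(x)}$ the analogous statement already fails --- it needs a Hopkins--Neeman type input. Here it follows from the classification of tensor ideal thick subcategories of $\sfD^b(\mmod(kP))$ (all thick subcategories are tensor ideal for the $p$-group $P$, and the perfects correspond to the origin alone), transported through the identification $B_0\cong kP$; alternatively the same single-simple tensor argument as above does it directly. With these two lemmas repaired, your reduction via the Verdier correspondence with $\stmod(B_0(kG))$ and Theorem~\ref{th:no-nucleus}, plus the empty set for the zero subcategory, goes through as you describe.
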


Again, if $Y_G$ is non-empty, we can work modulo objects supported 
inside the nucleus and obtain a classification of thick subcategories,
but this returns us to quotients of $\stmod(B_0(kG))$ so nothing new
is gained.

\section{Rickard idempotent modules and functors}\label{se:Rickard}

The proof of Theorem~\ref{th:thick} depends
in an essential way on the construction of certain 
infinitely generated modules, and the development of
a theory of support varieties in this context.

The modules in question are \emph{Rickard
idempotent modules}. Corresponding to any specialisation closed
subset $\mcV$ of $\mcV_G$, Rickard constructs two idempotent functors
on $\StMod(kG)$. He writes these as $E_\mcV$ and $F_\mcV$, but for
consistency with the notation of our series of papers we shall
use the notation $\Gamma_\mcV$ and $L_\mcV$. More generally, given
any thick subcategory $\fc$ of $\stmod(kG)$, we have functors
$\Gamma_\fc$ and $L_\fc$, and functorial triangles
\begin{equation}\label{eq:Rickard}
\Gamma_\fc(M) \to M \to L_\fc(M). 
\end{equation}

The defining properties of these triangles are given in terms of
\emph{localising subcategories} of $\StMod(kG)$.

\begin{defn}
Let $\sfT$ be a triangulated category with small products and coproducts.
A \emph{localising subcategory} of $\sfT$ is a thick subcategory
that is closed under coproducts, while a \emph{colocalising subcategory}
of $\sfT$ is a thick subcategory that is closed under products.
\end{defn}
The triangle of functors \eqref{eq:Rickard} is characterised by the 
following properties:
\begin{enumerate}
\item $\Gamma_\fc(M)$ is in the localising subcategory $\Loc(\fc)$ generated
by $\fc$.
\item If $N$ is in $\fc$ then $\sHom_{kG}(N,L_\fc(M))=0$.
\end{enumerate}
The functors $\Gamma_\fc$ and $L_\fc$ preserve small coproducts,
see for example \cite[Corollary~6.5]{Benson/Iyengar/Krause:2008a}.\smallskip

If $\fc$ is a tensor ideal thick subcategory then we have
\[ \Gamma_\fc(k) \otimes_k M \cong \Gamma_\fc(M), \qquad
L_\fc(k) \otimes_k M \cong L_\fc(M). \]
If $\fc=\fc_\mcV$ then we abbreviate $\Gamma_{\fc_\mcV}$ to $\Gamma_\mcV$
and $L_{\fc_\mcV}$ to $L_\mcV$, so that the Rickard triangle takes the form
\[ \Gamma_\mcV(M) \to M \to L_\mcV(M). \]

Rickard idempotent modules allow us to develop a theory of support
for modules in $\StMod(kG)$ as follows. Let $\fp\subseteq H^*(G,k)$ be 
homogeneous prime ideal. We choose specialisation closed subsets
$\mcV$ and $\mcW$ of $\mcV_G$ with the property that $\mcV\not\subseteq\mcW$
but $\mcV\subseteq\mcW\cup\{\fp\}$ (i.e., $\mcV\smallsetminus\mcW=\{\fp\}$).
Then we write $\Gamma_\fp$ for the functor 
$L_\mcW\Gamma_\mcV\cong\Gamma_\mcV L_\mcW$.
The functor $\Gamma_\fp$ defined in this way is independent of the
choice of $\mcV$ and $\mcW$ with the given properties, see
\cite[Theorem~6.2]{Benson/Iyengar/Krause:2008a}. Note that if $\fp$ is
the maximal prime $\fm$, we have $\Gamma_\fm=0$.

\begin{defn}
If $M$ is an object in $\StMod(kG)$, then the support
$\mcV_G(M)$ is the subset of $\mcV_G$ consisting of
\[ \{\fp\in\mcV_G\mid \Gamma_\fp(M)\ne 0\}. \]
\end{defn}

The following properties of $\mcV_G(M)$ are proved 
in \cite{Benson/Carlson/Rickard:1997a} for $k$ algebraically
closed, and in \cite{Benson/Iyengar/Krause:bik3} for a general field $k$.
This theorem should be compared with 
Theorem~\ref{th:VGM}.

\begin{theorem}
\label{th:cVGM}
Let $M$, $M_1$, $M_2$, $M_3$ be $kG$-modules.
\begin{enumerate}
\item $\mcV_G(M)=\varnothing$ 
if and only if $M$ is a projective module.
\item If $M$ is finitely generated then $\mcV_G(M)$ is equal
to the set of closed homogeneous irreducible subvarieties of $V_G(M)$.
\item For a small family $M_\alpha$ of $kG$-modules we have
\[ 
\mcV_G(\bigoplus_\alpha M_\alpha)=\bigcup_\alpha\mcV_G(M_\alpha). 
\]
\item $\mcV_G(M_1\otimes_k M_2)=\mcV_G(M_1)\cap \mcV_G(M_2)$.
\item If $0\to M_1\to M_2 \to M_3 \to 0$ is a short exact sequence
then for $i=1,2,3$, $\mcV_G(M_i)$ is contained in the union of the
supports of the other two modules.
\item $\mcV_G(\Gamma_\mcV(k))=\mcV\smallsetminus\{\fm\}$,
$\mcV_G(L_\mcV(k))=\mcV_G\smallsetminus\mcV$ and $\mcV_G(\Gamma_\fp(k))=\{\fp\}$ for $\fp\ne\fm$.
\item Given a subset $\mcV\subseteq V_G$, there exists a module $M$ with $\mcV_G(M)=\mcV$. For example we could
take $M=\bigoplus_{\fp\in\mcV}\Gamma_\fp(k)$.
\end{enumerate}
\end{theorem}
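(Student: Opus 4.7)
The plan is to leverage two formal properties of $\Gamma_\fp$: it is a triangulated functor on $\StMod(kG)$ that preserves small coproducts, and it satisfies $\Gamma_\fp(M) \cong \Gamma_\fp(k)\otimes_k M$ for every $M$. Both follow from the Rickard construction of $\Gamma_\mcV$ and $L_\mcW$ as tensor-ideal smashing functors. Items (iii), (v), (vi), (vii) then fall out formally; item (ii) follows by direct computation on finitely generated modules; and items (i) and (iv) are the two deep statements.

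For the formal parts: (iii) is immediate from coproduct-preservation, and (v) follows because a short exact sequence of modules yields a triangle in $\StMod(kG)$ to which the triangulated functor $\Gamma_\fp$ applies, giving the two-out-of-three vanishing property. For (vi), when $\fp \notin \mcV$ one chooses a specialisation-closed $\mcW \supseteq \mcV$ with $\fp \notin \mcW$: then $\Gamma_\mcV(k)$ lies in $\Loc(\fc_\mcW)$ and is annihilated by $L_\mcW$, through which $\Gamma_\fp$ factors, so $\Gamma_\fp(\Gamma_\mcV(k)) = 0$. When $\fp \in \mcV \smallsetminus \{\fm\}$, one exhibits a nonzero finitely generated module whose variety is the closed subvariety determined by $\fp$ via Theorem~\ref{th:VGM}(viii), and uses it together with (ii) to witness that $\Gamma_\fp(\Gamma_\mcV(k))$ is nonzero. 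The Rickard triangle then yields the statements for $L_\mcV(k)$ and $\Gamma_\fp(k)$. Item (vii) follows at once from (iii) and (vi) by taking $M = \bigoplus_{\fp\in\mcV}\Gamma_\fp(k)$.

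For (ii), on finitely generated $M$ one computes $\Gamma_\fp(M)$ from the recipe $\Gamma_\fp = L_\mcW \Gamma_{\mcV'}$ and from the following consequences of Theorem~\ref{th:VGM}: $\Gamma_{\mcV'}(M) \cong M$ when every irreducible component of $V_G(M)$ lies in $\mcV'$, and $\Gamma_{\mcV'}(M) = 0$ when none of them do. A short bookkeeping argument then gives $\Gamma_\fp(M) \ne 0$ precisely when the subvariety determined by $\fp$ is contained in $V_G(M)$ and $\fp \ne \fm$, identifying $\mcV_G(M)$ with the closed homogeneous irreducible subvarieties of $V_G(M)$.

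The two genuinely deep statements, and hence the main obstacles, are (i) and (iv). For (i), projective modules are zero in $\StMod(kG)$, so the easy direction is trivial. The converse is the \emph{local-global detection of projectivity} from \cite{Benson/Iyengar/Krause:bik3}: if $\Gamma_\fp(M) = 0$ for every $\fp \in \mcV_G$, then $M$ is projective. This is proved by working in the ambient category $\KInj(kG)$, reducing via Quillen stratification to the case of elementary abelian $p$-groups, and completing that case by a Koszul argument over the polynomial cohomology ring. The easy inclusion in (iv), $\mcV_G(M_1 \otimes_k M_2) \subseteq \mcV_G(M_1) \cap \mcV_G(M_2)$, follows from the identity $\Gamma_\fp(M_1 \otimes_k M_2) \cong \Gamma_\fp(M_1) \otimes_k M_2$. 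The reverse inclusion is the \emph{tensor product theorem}: $\Gamma_\fp(k)$ has the minimality property that $\Gamma_\fp(k) \otimes_k X$ vanishes if and only if $\Gamma_\fp(X)$ does, whence nonvanishing of both $\Gamma_\fp(M_i)$ forces nonvanishing of $\Gamma_\fp(M_1 \otimes_k M_2)$. This minimality is again extracted from the local-global principle, applied to Koszul-type objects built from a homogeneous system of parameters for $\fp$, and is the technical core of the whole theory.
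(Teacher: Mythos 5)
Your overall architecture matches the route the paper itself endorses: the formal items (iii), (v), (vi), (vii) come from the general $R$-linear support machinery of \cite{Benson/Iyengar/Krause:2008a}, and the reverse inclusion in (iv) is obtained from the Stratification Theorem~\ref{th:strat} exactly as in the paper's application section (the Dade-free proof of the tensor product theorem, valid for arbitrary $k$, as opposed to the original argument of \cite{Benson/Carlson/Rickard:1997a} via rank varieties and Theorem~\ref{th:Dade}). Two points, however, need repair. First, in your sketch of (ii) the asserted vanishing ``$\Gamma_{\mcV'}(M)=0$ when no irreducible component of $V_G(M)$ lies in $\mcV'$'' is false: a specialisation closed $\mcV'$ may consist of proper subvarieties of a component of $V_G(M)$ (say $V_G(M)$ an irreducible surface and $\mcV'$ the subvarieties of a line inside it), and then $\Gamma_{\mcV'}(M)\neq 0$, since $\supp(\Gamma_{\mcV'}M)=\mcV'\cap\mcV_G(M)$ is non-empty by the very statement (ii) you are trying to compute. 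The correct bookkeeping for a finitely generated (compact) $M$ is the identification of its support with the Zariski support of its cohomology: $\Hom^*_{kG}(C,\Gamma_\fp M)$ is the local cohomology at $\fp$ of $\Hom^*_{kG}(C,M)_\fp$, which is non-zero precisely when $\fp$ lies in the support of that module (Grothendieck non-vanishing); for $C=k$ or $C=M$ this gives $\mcV_G(M)=\{\fp\neq\fm \mid \fp\supseteq I_M\}$, i.e.\ (ii). This is Theorem~5.5 of \cite{Benson/Iyengar/Krause:2008a}, not a purely formal consequence of Theorem~\ref{th:VGM}.

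Second, you locate the depth of (i) in the wrong place. Since $H^*(G,k)$ is Noetherian of finite Krull dimension, the local-global principle (Theorem~\ref{th:BIK2:7.2}, or the finite-dimension version) gives $M\in\Loc^\otimes(\{\Gamma_\fp(M)\mid\fp\in\Spec H^*(G,k)\})$; so if $\Gamma_\fp(M)=0$ for all $\fp$ then $M=0$ in $\StMod(kG)$, i.e.\ $M$ is projective. No Quillen stratification, Chouinard reduction or Koszul argument is needed for this. Indeed the proof of Theorem~\ref{th:strat} that you invoke already uses this detection fact (one chooses a prime in the support of a non-zero restriction), so deriving (i) from the full stratification machinery, while not strictly circular, inverts the logical order of the theory. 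The genuinely deep ingredient in Theorem~\ref{th:cVGM} is (iv) alone (equivalently, minimality of each $\Gamma_\fp\StMod(kG)$), and there your argument coincides with the paper's.
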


The proof of the tensor product theorem 
given in \cite{Benson/Carlson/Rickard:1997a}
depends on comparison with rank varieties and 
the following version of Dade's lemma for modules in $\StMod(kG)$, 
which appeared as Theorem~5.2 of 
\cite{Benson/Carlson/Rickard:1996a}.

\begin{theorem}\label{th:Dade}
Let $E=\langle g_1,\dots,g_r\rangle\cong(\bZ/p)^r$,
let $k$ be an algebraically closed field of characteristic $p$, 
and let $K$ be an algebraically closed
extension field of $k$ of transcendence degree at least $r-1$.
Then a $kE$-module $M$ is projective if and only if for every choice
of $(\lambda_1,\dots,\lambda_r)\in K^r$ with not all the $\lambda_i$ equal
to zero, the restriction
of $K\otimes_k M$ to the cyclic subgroup of $KE$ of order $p$ generated by
\[ 1+\lambda_1(g_1-1)+\dots+\lambda_r(g_r-1) \]
is projective.
\end{theorem}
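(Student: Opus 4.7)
The forward direction is routine: if $M$ is $kE$-projective then it is free over the local Frobenius algebra $kE$, so $K\otimes_k M$ is free over $KE$. For any nonzero $(\lambda_1,\dots,\lambda_r)\in K^r$, the element $u=1+\sum_i\lambda_i(g_i-1)$ satisfies $u^p=1$ in characteristic $p$, generating a cyclic subgroup of order $p$, and since $u-1$ lies in the radical of $KE$ but not in its square, $KE$ is free as a module over the subalgebra $K[\langle u\rangle]\cong K[\bZ/p]$; restriction of $K\otimes_k M$ to $\langle u\rangle$ therefore remains free and in particular projective.

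For the converse I would first handle the finitely generated case, which is the classical Dade's lemma. The strategy is to introduce the rank variety $V_E^r(M)\subseteq k^r$, consisting of those $\lambda$ for which $M$ fails to be free on restriction to $\langle 1+\sum_i\lambda_i(g_i-1)\rangle$, and to identify it with the cohomological support variety $V_E(M)$ appearing in Theorem~\ref{th:VGM}. Since $V_E(M)=\{0\}$ exactly when $M$ is projective and $k$ is algebraically closed, any nonempty rank variety contains a $k$-rational point; no transcendental extension is needed at this stage.

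For arbitrary $M$ the approach is to specialize at a sufficiently generic point. By the transcendence degree hypothesis, pick $t_1,\dots,t_{r-1}\in K$ algebraically independent over $k$, set $L=k(t_1,\dots,t_{r-1})$, and $\alpha=\sum_{i<r}t_i(g_i-1)+(g_r-1)\in LE$; applied to $\lambda=(t_1,\dots,t_{r-1},1)$ the hypothesis asserts that $L\otimes_k M$ is free over $L[\langle 1+\alpha\rangle]\cong L[\alpha]/(\alpha^p)$, equivalently $\alpha^{p-1}(L\otimes_k M)=\Ker\alpha$. The main obstacle is descending this generic freeness to $kE$-freeness of $M$: if $M$ is not $kE$-free then, since $kE$ is Frobenius with $k$ as its only simple module, $\Ext^1_{kE}(k,M)\neq 0$, and I would try to show that this class survives the flat base change to $L$ and restricts non-trivially to $\Ext^1_{L[\langle 1+\alpha\rangle]}(L,L\otimes_k M)$, contradicting the projectivity hypothesis. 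The subtleties are that restriction to a shifted cyclic subgroup can \emph{a priori} kill cohomology, and non-freeness of an infinite-dimensional $M$ need not be concentrated in any finitely generated submodule, so the finite-dimensional Dade's lemma cannot be applied piecewise. I expect the resolution to combine a filtered-colimit argument on $\Ext^1$ with a Koszul-style calculation over the generic shifted subgroup, showing that $\Ext^1_{kE}(k,M)\neq 0$ forces generic non-vanishing.
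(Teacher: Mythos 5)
Your forward direction and your treatment of the finitely generated case are fine (the latter is classical Dade, via the identification of rank and cohomological support varieties). The problem is the step you yourself flag as the crux: your plan is to use the hypothesis only at the single generic point $\lambda=(t_1,\dots,t_{r-1},1)$ and to show that $\Ext^1_{kE}(k,M)\neq 0$ forces non-projectivity of the restriction to that one generic shifted cyclic subgroup. That implication is false, already for finitely generated modules. Take $M=L_\zeta$ for some $0\neq\zeta\in H^*(E,k)$ of positive degree (Theorem~\ref{th:VGM}(vii)): $M$ is non-projective, but its rank variety is the proper hypersurface cut out by (the image of) $\zeta$, and since $t_1,\dots,t_{r-1}$ are algebraically independent over $k$ the generic point does not lie on any proper $k$-subvariety; hence $K\otimes_k L_\zeta$ is free over the generic shifted subgroup. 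Infinite-dimensional examples are even more drastic: $\Gamma_\fp(k)$ has support exactly $\{\fp\}$ (Theorem~\ref{th:cVGM}(vi)), so it is projective on restriction to every shifted cyclic subgroup except those "at $\fp$". So no argument that tests a single $\lambda$, however generic, can work; the hypothesis must be used at points of every transcendence degree from $0$ to $r-1$, and this is precisely why the transcendence degree $r-1$ appears in the statement --- supports of infinitely generated modules can be arbitrary subsets of $\mcV_E$, and each prime is detected only by points of the matching transcendence degree.

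Note also that the survey does not prove this theorem: it quotes it as Theorem~5.2 of \cite{Benson/Carlson/Rickard:1996a}. The proof there is not a one-point specialisation plus descent of an $\Ext^1$ class, but an induction on the rank $r$, passing from $E$ to (shifted) subgroups and quotients of smaller rank and trading one transcendence degree for each drop in rank; the generic shifted subgroup enters through this induction, not as a single test object. Your closing remarks about filtered colimits and Koszul-type computations do not address this structural obstruction, so as it stands the proposal has a genuine gap in the only direction that is hard.
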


The role of this version of Dade's lemma in the development
given in \cite{Benson/Carlson/Rickard:1996a,Benson/Carlson/Rickard:1997a}
is what necessitates the requirement that $k$ is algebraically closed.
Later we shall describe another proof avoiding rank varieties
and avoiding any version of Dade's lemma, and which works for a
general field $k$.

\begin{remark}[Maximal versus prime ideals]
The reader will have observed that when we were dealing with 
the stable category $\stmod(kG)$ of finitely generated modules, 
we used $V_G$, the spectrum of maximal ideals in
$H^*(G,k)$, whereas for the stable category $\StMod(kG)$ of all modules
we used $\mcV_G$, the spectrum of homogeneous prime ideals in $H^*(G,k)$.
In the case of a finitely generated module, $\mcV_G(M)$
is determined by $V_G(M)$, see Theorem \ref{th:cVGM}~(ii), whereas
for an infinitely generated module it is not, see part (vii) of
the same theorem.

It would have been possible to use $\mcV_G$ consistently throughout,
but we chose not to, partly for historical reasons.
The origin of the use of $V_G$ is Quillen's work 
\cite{Quillen:1971b,Quillen:1971c}, and much of the literature on
finitely generated modules has been written in this context.

A theorem of Hilbert states that for the maximal ideal spectrum
and the homogeneous prime ideal spectrum of a finitely generated
commutative algebras over a field, 
each determines the other.
A graded commutative ring is commutative modulo its nil
radical, so this applies here.

The homogeneous prime spectrum of a nonstandardly graded ring
can be quite confusing. For example a polynomial ring can give a
spectrum with singularities even though it is regular in the
commutative algebra sense. An explicit example of this is the
ring $k[x,y,z]$ with $|x|=2$, $|y|=4$, $|z|=6$. The affine open
patch corresponding to $y\ne 0$ has coordinate ring generated
by $\alpha=x^2/y$, $\beta=xz/y^2$, $\gamma=z^2/y^3$, with the
single relation $\alpha\gamma=\beta^2$. This has a singularity
at the origin. This example arises as $H^*(G,k)$ modulo its nil 
radical with $G=(\bZ/p)^3\rtimes\Sigma_3$ for $p\ge 5$, and $k$ a
field of characteristic $p$.

Finally, in the Quillen stratification theorem as described in
\cite{Quillen:1971b,Quillen:1971c} (see also Section \ref{se:Qstrat} of
this survey), 
part of the statement
is that $N_G(E)/C_G(E)$ acts freely on the stratum corresponding
to $E$. This is true for inhomogeneous maximal, but not for
homogeneous prime ideals. Fortunately, our use of Quillen stratification
does not involve this feature.
\end{remark}

\section{Classification of tensor ideal thick subcategories}

The key step in the classification of tensor ideal thick subcategories
of $\stmod(kG)$ (Theorem~\ref{th:thick}) is the following theorem.

\begin{theorem}
Let $M$ be a finitely generated $kG$-module with
$k$ algebraically closed, and let $\mcV$ be the
collection of closed homogeneous irreducible subvarieties of
$V_G(M)$. Then the tensor ideal thick subcategory
generated by $M$ is equal to $\fc_\mcV$.
\end{theorem}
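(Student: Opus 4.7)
\emph{Plan.} I would prove both inclusions, one formal and one requiring the full machinery of Rickard idempotents in the big category.

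\emph{Easy inclusion.} For $\langle M\rangle_\otimes \subseteq \fc_\mcV$, note that $M \in \fc_\mcV$ by the very definition of $\mcV$, while Lemma~\ref{le:CV} together with Theorem~\ref{th:VGM}~(v) says that $\fc_\mcV$ is already a tensor ideal thick subcategory. Hence it contains $\langle M\rangle_\otimes$.

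\emph{Hard inclusion.} Let $N \in \fc_\mcV$; the goal is to show $N \in \fc := \langle M\rangle_\otimes$. I would pass to $\StMod(kG)$ and compare Rickard idempotents using the triangle $\Gamma_\fc(k) \to k \to L_\fc(k)$. First, using Theorem~\ref{th:VGM}~(viii), every closed irreducible $V \subseteq V_G(M)$ can be written as $V_G\langle \zeta_1,\dots,\zeta_n\rangle \cap V_G(M)$ for cohomology classes $\zeta_i$, so the module
\[
M \otimes_k L_{\zeta_1} \otimes_k \cdots \otimes_k L_{\zeta_n}
\]
lies in $\fc$ and has support exactly $V$ by parts~(v) and (vii) of Theorem~\ref{th:VGM}. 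Thus the supports of objects of $\fc$ fill out all of $\mcV$, which after comparison with $\fc_\mcV$ yields $\mcV_G(L_\fc(k)) = \mcV_G \smallsetminus \mcV$. Then for $N \in \fc_\mcV$, the tensor-product theorem in the big category (Theorem~\ref{th:cVGM}~(iv)) gives
\[
\mcV_G\bigl(N \otimes_k L_\fc(k)\bigr) \;=\; \mcV_G(N) \cap (\mcV_G \smallsetminus \mcV) \;=\; \varnothing,
\]
so Theorem~\ref{th:cVGM}~(i) forces $N \otimes_k L_\fc(k) = 0$ in $\StMod(kG)$, i.e.\ $N \cong \Gamma_\fc(N) \in \Loc(\fc)$. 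To conclude, I would invoke Neeman's theorem identifying the compact objects of the localising subcategory generated by a set of compact objects with the thick closure of those objects; since $N \in \stmod(kG)$ is compact and $\fc$ is already thick, this forces $N \in \fc$.

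\emph{Main obstacle.} The decisive inputs are parts~(i) and (iv) of Theorem~\ref{th:cVGM}: the vanishing criterion and the tensor-product theorem for arbitrary, not necessarily finitely generated modules. In Benson, Carlson and Rickard these rest on Dade's lemma (Theorem~\ref{th:Dade}) together with a detour through rank varieties after a large field extension, and it is exactly this step that compels the assumption that $k$ be algebraically closed. By contrast, the descent from $\Loc(\fc)$ back to $\fc$ itself is soft, relying only on the compact generation of $\StMod(kG)$ by $\stmod(kG)$.
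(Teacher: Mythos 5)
Your overall architecture is sound and matches the paper's in its main ingredients: the easy inclusion, the passage to Rickard idempotents in $\StMod(kG)$, the decisive use of parts (i) and (iv) of Theorem~\ref{th:cVGM} (which is indeed where algebraic closedness enters, via Theorem~\ref{th:Dade}), and the final descent from $\Loc(\fc)$ back to $\fc$ by the theorem on compact objects. But the pivotal step is asserted rather than proved. You claim that because $\fc$ contains objects whose supports fill out $\mcV$, it follows ``after comparison with $\fc_\mcV$'' that $\mcV_G(L_\fc(k))=\mcV_G\smallsetminus\mcV$; the containment you actually need, $\mcV_G(L_\fc(k))\cap\mcV=\varnothing$, does not follow from what you have written. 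The defining property of $L_\fc$ is a Hom-vanishing statement ($\sHom_{kG}(W,L_\fc(X))=0$ for $W\in\fc$), and converting Hom-vanishing into support-disjointness is exactly the non-trivial point: vanishing of $\sHom^*_{kG}(k,X)$ does not force $X=0$ in $\StMod(kG)$ (this failure is precisely the nucleus phenomenon of Theorem~\ref{th:nucleus}), so having objects of $\fc$ supported at a prime $\fp\in\mcV$ does not by itself kill $\Gamma_\fp L_\fc(k)$.

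The missing idea is the adjunction/duality trick with simple modules, which is the heart of the paper's proof: since $\fc$ is tensor ideal, $S\otimes_k M\in\fc$ for every simple $S$, so
\[ \sHom_{kG}(S,M^*\otimes_k L_\fc(X))\cong\sHom_{kG}(S\otimes_k M,L_\fc(X))=0, \]
whence $M^*\otimes_k L_\fc(X)$ is projective; only then does Theorem~\ref{th:cVGM}(iv) convert this into $\mcV_G(M)\cap\mcV_G(L_\fc(X))=\varnothing$. With this step inserted (taking $X=k$, or, as the paper does, $X=\Gamma_\mcV(N)$ for an arbitrary module $N$), your argument closes; moreover your detour through the modules $M\otimes_k L_{\zeta_1}\otimes_k\cdots\otimes_k L_{\zeta_n}$ then becomes unnecessary, since $M$ alone has $\mcV_G(M)=\mcV$ (up to the irrelevant prime $\fm$, for which $\Gamma_\fm=0$), so disjointness from $\mcV_G(M)$ is already disjointness from $\mcV$. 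The paper's proof is exactly this: form the triangle $\Gamma_\fc(N)\to\Gamma_\mcV(N)\to L_\fc\Gamma_\mcV(N)$, observe the third term has support contained in $\mcV$, kill it by the simple-module computation plus the tensor product theorem, conclude that $\Gamma_\fc(N)\to\Gamma_\mcV(N)$ is a stable isomorphism, and then finish with the compactness argument you cite.
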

\begin{proof}
Write $\fc$ for the tensor ideal thick subcategory generated by $M$.
It is clear that $\fc$ is contained in $\fc_\mcV$.
So the natural transformation $\Gamma_\fc\Gamma_\mcV\to\Gamma_\fc$
is an isomorphism. So if $N$
is any $kG$-module then we obtain a triangle
\[ \Gamma_\fc(N) \to \Gamma_\mcV(N)\to 
L_\fc\Gamma_\mcV(N).\]
The first two terms in this triangle are in $\Loc(\fc_\mcV)$, and hence
so is the third. So
\[ \mcV_G(L_\fc\Gamma_\mcV(N))\subseteq\mcV. \]

If $S$ is any simple $kG$-module then
\[ \sHom_{kG}(S,M^*\otimes_kL_\fc\Gamma_\mcV(N))
\cong \sHom_{kG}(S\otimes_kM,L_\fc\Gamma_\mcV(N))=0. \]
Thus $M^*\otimes_kL_\fc\Gamma_\mcV(N)$ is projective,
and hence
\[ \varnothing=\mcV_G(M^*\otimes_kL_\fc\Gamma_\mcV(N))
=\mcV_G(M)\cap\mcV_G(L_\fc\Gamma_\mcV(N))
=\mcV_G(L_\fc\Gamma_\mcV(N)). \]
It follows that $L_\fc\Gamma_\mcV(N)$ is projective,
and so $\Gamma_\fc(N) \to \Gamma_\mcV(N)$ is
a stable isomorphism. Thus $\fc=\fc_\mcV$.
\end{proof}

\section{Localising subcategories of $\StMod(kG)$}

First we describe the goal. As with the thick subcategories of
$\stmod(kG)$, we only hope to classify the tensor ideal localising
subcategories of $\StMod(kG)$; this is all localising subcategories
only in the case where $G$ is a $p$-group.

There are a lot more tensor ideal localising subcategories of
$\StMod(kG)$ than tensor ideal thick subcategories of $\stmod(kG)$,
as we can see by comparing Theorems~\ref{th:VGM} and \ref{th:cVGM}.
For \emph{any} subset $\mcV$, not just for a specialisation closed one,
let $\sfC_\mcV$ be the full subcategory of $\StMod(kG)$ consisting of
those modules $M$ with $\mcV_G(M)\subseteq\mcV$. The following lemma
is the analogue of Lemma~\ref{le:CV}.

\begin{lemma}
If $\mcV$ is a subset of $\mcV_G$ then $\sfC_\mcV$ is a tensor ideal localising 
subcategory of $\StMod(kG)$.
\end{lemma}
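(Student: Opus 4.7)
The proof is essentially an unpacking of Theorem~\ref{th:cVGM}, just as the analogous Lemma~\ref{le:CV} followed directly from Theorem~\ref{th:VGM}. The plan is to verify in turn each closure property demanded of a tensor ideal localising subcategory: (a) closure under the shift functors $\Omega^{\pm 1}$, (b) the two-in-three property for distinguished triangles, (c) closure under direct summands, (d) closure under small coproducts, and (e) closure under tensor products with arbitrary $kG$-modules. Unlike Lemma~\ref{le:CV}, here no specialisation-closure hypothesis is needed, because the characterising condition $\mcV_G(M)\subseteq\mcV$ involves the support itself rather than only its closure.

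For (a), observe that $\Omega M$ sits in a short exact sequence $0\to\Omega M\to P\to M\to 0$ with $P$ projective; by Theorem~\ref{th:cVGM}(i), $\mcV_G(P)=\varnothing$, so Theorem~\ref{th:cVGM}(v) yields $\mcV_G(\Omega M)\subseteq\mcV_G(M)$, and similarly for $\Omega^{-1}M$. For (b), apply Theorem~\ref{th:cVGM}(v) directly: in a triangle $A\to B\to C\to\Omega^{-1}A$, if any two of $A,B,C$ lie in $\sfC_\mcV$, the support of the third is contained in the union of the other two supports, which is inside $\mcV$. For (c) and (d), Theorem~\ref{th:cVGM}(iii) gives $\mcV_G(\bigoplus_\alpha M_\alpha)=\bigcup_\alpha\mcV_G(M_\alpha)$, so a coproduct of objects of $\sfC_\mcV$ has support contained in $\mcV$, and any summand has support contained in that of the whole, hence in $\mcV$. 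Finally, for (e), Theorem~\ref{th:cVGM}(iv) gives $\mcV_G(M\otimes_k N)=\mcV_G(M)\cap\mcV_G(N)\subseteq\mcV_G(M)$, so $M\in\sfC_\mcV$ forces $M\otimes_k N\in\sfC_\mcV$ for every $kG$-module $N$.

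There is no real obstacle here — every step is an immediate application of one clause of Theorem~\ref{th:cVGM}. The only mild subtlety, worth pointing out, is that the lemma crucially uses the fact that support in $\StMod(kG)$ is defined via the functors $\Gamma_\fp$ rather than via closures of Zariski-type varieties; this is what allows arbitrary (not merely specialisation-closed) subsets $\mcV$ to index a localising subcategory, and it is what distinguishes this lemma from its finitely generated analogue Lemma~\ref{le:CV}.
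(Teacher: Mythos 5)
Your proof is correct and follows exactly the route the paper has in mind: the lemma is stated without proof immediately after the remark that it is the analogue of Lemma~\ref{le:CV}, whose own proof is simply ``this follows directly from Theorem~\ref{th:VGM},'' so spelling out each closure property from the clauses of Theorem~\ref{th:cVGM} is precisely the intended argument. Your closing observation about why no specialisation-closure hypothesis is needed is a nice touch, though the cleaner way to say it is that the defining condition $\mcV_G(M)\subseteq\mcV$ is already preserved by unions and intersections without any need to pass to irreducible components, whereas in Lemma~\ref{le:CV} the condition on irreducible components of $V_G(M)$ forces one to assume $\mcV$ is specialisation closed.
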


The statement of the classification theorem is as follows.

\begin{theorem}[Benson, Iyengar and Krause \cite{Benson/Iyengar/Krause:bik3}]%
\label{th:localising}
There is a one to one correspondence between tensor ideal localising
subcategories of $\StMod(kG)$ and subsets of $\mcV_G\smallsetminus\{0\}$.
Under this correspondence, $\mcV$ corresponds to $\sfC_\mcV$.
\end{theorem}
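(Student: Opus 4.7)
The plan is to construct an inverse to $\mcV\mapsto\sfC_\mcV$ via $\sigma(\sfL)=\bigcup_{M\in\sfL}\mcV_G(M)$; note that $\sigma(\sfL)\subseteq\mcV_G\smallsetminus\{0\}$ because $\Gamma_\fm=0$ forces $\fm$ out of every support. The identity $\sigma(\sfC_\mcV)=\mcV$ follows immediately from Theorem~\ref{th:cVGM}(vi): for each $\fp\in\mcV$ the module $\Gamma_\fp(k)$ lies in $\sfC_\mcV$ and has support exactly $\{\fp\}$. The inclusion $\sfL\subseteq\sfC_{\sigma(\sfL)}$ is tautological, so the substance of the theorem is the reverse inclusion $\sfC_{\sigma(\sfL)}\subseteq\sfL$.

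I would deduce this reverse inclusion from two structural principles, both developed in subsequent sections of the paper: a \emph{local-global principle}, asserting that every $M\in\StMod(kG)$ lies in the localising subcategory $\Loc(\{\Gamma_\fp(M):\fp\in\mcV_G\})$; and \emph{local minimality}, asserting that for each $\fp\in\mcV_G\smallsetminus\{0\}$ the only tensor ideal localising subcategories of $\Gamma_\fp\StMod(kG)$ are $\{0\}$ and $\Gamma_\fp\StMod(kG)$ itself. Granting both, the argument is formal. Fix $M$ with $\mcV_G(M)\subseteq\sigma(\sfL)$. For each $\fp\in\sigma(\sfL)$ choose $N_\fp\in\sfL$ with $\Gamma_\fp(N_\fp)\ne 0$; since $\sfL$ is tensor ideal, $\Gamma_\fp(N_\fp)\cong\Gamma_\fp(k)\otimes_k N_\fp$ lies in $\sfL$, so $\sfL\cap\Gamma_\fp\StMod(kG)$ is a non-zero tensor ideal localising subcategory of $\Gamma_\fp\StMod(kG)$. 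Local minimality forces this intersection to be all of $\Gamma_\fp\StMod(kG)$, so in particular $\Gamma_\fp(M)\in\sfL$. For $\fp\notin\sigma(\sfL)$ we have $\Gamma_\fp(M)=0$, and applying the local-global principle to $M$ now yields $M\in\sfL$.

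The main obstacle is local minimality. My strategy is a three-stage reduction. First, Quillen stratification decomposes $\mcV_G$ into strata indexed by conjugacy classes of elementary abelian $p$-subgroups $E\le G$, and restriction/induction between $\StMod(kG)$ and $\StMod(kE)$ reduces the minimality statement at a prime in the stratum of $E$ to the analogous statement for $E$. Second, for $E=(\bZ/p)^r$ the Koszul/BGG machinery transfers the problem to the derived category $\sfD(\Mod S)$ of a graded polynomial ring $S\cong k[\zeta_1,\dots,\zeta_r]$, matching $\Gamma_\fp\StMod(kE)$ with $\Gamma_\fq\sfD(\Mod S)$ for the corresponding prime $\fq\subset S$. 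Third, Neeman's classification of localising subcategories of $\sfD(\Mod S)$ by arbitrary subsets of $\Spec S$ makes minimality immediate, since the only subsets of the singleton $\{\fq\}$ are $\varnothing$ and $\{\fq\}$. The technical core of the theorem is thus Neeman's commutative-algebra classification, transported to modular representation theory by BGG and distributed over $\mcV_G$ by Quillen stratification.
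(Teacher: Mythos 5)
Your proposal is correct and follows essentially the same route as the paper: recast the theorem as stratification (local-global principle plus minimality of each $\Gamma_\fp$-local subcategory), then establish minimality by reducing to elementary abelian subgroups via Quillen stratification and Chouinard-type detection, passing through the Koszul construction and the BGG correspondence to a graded polynomial dg algebra, where a dg version of Neeman's classification finishes the job. The only cosmetic differences are that the paper runs the argument in $\KInj(kG)$ (where the origin causes no trouble) before transferring to $\StMod(kG)$, and it flags technical points you gloss over (e.g.\ the two Hopf structures on $kE$ and the induction/restriction bookkeeping via $k(G/E)$), but the strategy is identical.
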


The proof of this classification theorem
is much harder than the corresponding proof for 
tensor ideal thick subcategories of $\stmod(kG)$.
If one tries to mimic the arguments of Neeman \cite{Neeman:1992a},
there is a basic obstruction, which is the lack of appropriate
field objects. Specifically, given a prime $\fp\subseteq H^*(G,k)$,
a field object for $\fp$ would be a module $M$ such that
$\hat H^*(G,M)=\sHom^*_{kG}(k,M)$ is isomorphic to the graded field
of fractions of $H^*(G,k)/\fp$. Such an object does not always exist,
as can be seen using the obstruction theory of Benson, Krause 
and Schwede \cite{Benson/Krause/Schwede:2004a,Benson/Krause/Schwede:2005a}.

There is one case where there are field objects, and that is the
case of an elementary abelian $2$-group $E$. The point here is that
the group algebra of an elementary abelian $2$-group is the same as
an exterior algebra in characteristic two. The basic property of
the cohomology in this case is that it is ``formal'', in the sense that
the cochains and the cohomology are equivalent---there is no higher order
information. This is made precise by the Bernstein--Gelfand--Gelfand 
(BGG) correspondence \cite{Bernstein/Gelfand/Gelfand:1978a}, which
gives a correspondence between appropriate categories of modules
for the exterior algebra and for a polynomial algebra, where the
polynomial algebra is regarded as the cohomology of the exterior algebra
and \emph{vice versa}. In this case, one can perform the classification
for the graded polynomial ring $H^*(E,k)$ and then use 
the BGG correspondence to prove the classification for $\StMod(kE)$; see 
\cite{Benson/Iyengar/Krause:bik6} for details.

For a general finite group in characteristic two, the classification
can be achieved using the Quillen stratification theorem and
Chouinard's theorem to reduce to elementary abelian subgroups.
We shall describe these later.

If $E$ is an elementary abelian $p$-group for $p$ odd, the problem
is that the cochains are not formal. However, there is a device
coming from commutative algebra that allows us to reduce to the
formal case. Namely, we regard the group algebra $kE$ as a 
\emph{complete intersection}, and then use a suitable Koszul complex.
We shall describe this construction later.

Before addressing these points, we wish to set up a slightly cleaner
version of the module category, where the origin $\{0\}\in\mcV_G$
is not excluded, as it is in Theorems~\ref{th:thick} and \ref{th:localising}.
This is the category $\KInj(kG)$ described in the next section. It
bears the same relation to $\StMod(kG)$ as the bounded derived
category $\sfD^b(\mmod(kG))$ does to $\stmod(kG)$. 
It might be thought that $\sfD(\Mod(kG))$ would play
this role, but the compact objects in $\sfD(\Mod(kG))$ are just the
\emph{perfect complexes}, namely the complexes quasiisomorphic to
finite complexes of finitely generated \emph{projective} modules.
So we would get $\sfK^b(\proj(kG))$ rather than the desired $\sfD^b(\mmod(kG))$.

\section{The category $\KInj(kG)$}

The objects of $\KInj(kG)$ are the complexes of injective 
(or equivalently projective, see Lemma~\ref{le:pr=inj})
$kG$-modules. We should emphasise that this means \emph{unbounded} complexes
of not necessarily finitely generated injective modules.
The arrows are homotopy classes of degree preserving maps of complexes.
This is a triangulated category in which the triangles come from the
mapping cone construction. This category is investigated in detail in
Krause \cite{Krause:2005a}, Benson and Krause \cite{Benson/Krause:2008a}.

Let $\KacInj(kG)$ be the full subcategory of $\KInj(kG)$ whose objects
are the acyclic complexes. These objects can be described as 
\emph{Tate resolutions} of modules.

\begin{defn}
If $M$ is a $kG$-module then a \emph{Tate resolution} of $M$ is formed
by splicing together an injective resolution and a projective resolution
of $M$:
\[ \xymatrix@=5mm{\cdots\ar[r]&P_1\ar[r]&P_0\ar[rr]\ar[dr]&&I_0\ar[r]&
I_1\ar[r]&\cdots\\
&&&M\ar[ur]\ar[dr]\\&&0\ar[ur]&&0} \]
\end{defn}

Every acyclic complex of injectives 
\[ \cdots \to P_1\to P_0 \to P_{-1} \to P_{-2} \to \cdots \]
can be regarded as a Tate resolution of the image of $P_0\to P_{-1}$.
Furthermore, given a module homomorphism $M\to M'$, it can be extended
to a map of Tate resolutions. Such extensions are homotopic if and
only if the homomorphisms differ by an element of $\PHom_{kG}(M,M')$.
It follows that Tate resolutions give an equivalence of categories
between $\StMod(kG)$ and $\KacInj(kG)$.

We write $tk$ for a Tate resolution of the trivial module $k$, as
an object in $\KInj(kG)$, $ik$ for an injective resolution, and
$pk$ for a projective resolution. So there is a triangle in $\KInj(kG)$
of the form
\[ pk \to ik \to tk \]
expressing $tk$ as the mapping cone of the map $pk\to ik$. 
We can make projective, injective and Tate resolutions of any module
by tensoring:
\[ M \otimes_k pk  \to M\otimes_k ik \to M \otimes_k tk. \]

Tensor products in $\KInj(kG)$ are taken to be tensor products over $k$ of
complexes of modules. Note that $ik$ is the tensor identity of $\KInj(kG)$.

Now $\KacInj(kG)$ is a localising subcategory of $\KInj(kG)$, and the
quotient category $\KInj(kG)/\KacInj(kG)$ 
is the unbounded derived category $\sfD(\Mod(kG))$. The inclusion 
\[ \KacInj(kG)\to\KInj(kG) \] 
and the quotient functor 
\[ \KInj(kG)\to\sfD(\Mod(kG)) \]
each have both a left and a right adjoint, so that we get a 
diagram of categories and functors
\[ \StMod(kG) \simeq \KacInj(kG) \ \begin{smallmatrix} \Hom_k(tk,-) \\
\hbox to 50pt{\leftarrowfill} \\ \hbox to 50pt{\rightarrowfill} \\ 
\hbox to 50pt{\leftarrowfill} \\ - \otimes_k tk
\end{smallmatrix} \ \KInj(kG) \ \begin{smallmatrix} \Hom_k(pk,-) \\
\hbox to 50pt{\leftarrowfill} \\ \hbox to 50pt{\rightarrowfill} \\ 
\hbox to 50pt{\leftarrowfill} \\ - \otimes_k pk
\end{smallmatrix} \ \sfD(\Mod(kG)). \]

The compact objects in these categories are only preserved by
the left adjoints, and give us back Rickard's sequence
\[ \stmod(kG) \leftarrow \sfD^b(\mmod(kG)) \leftarrow \sfK^b(\proj(kG)). \]

\begin{lemma}
The only tensor ideal localising subcategories of the unbounded derived category $\sfD(\Mod(kG))$
are zero and the whole category.
\end{lemma}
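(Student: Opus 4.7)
The plan is to show that if $\mathsf{L}$ is a non-zero tensor ideal localising subcategory of $\sfD(\Mod(kG))$ then $\mathsf{L}$ contains $kG$; the conclusion will then follow because $kG$ is a compact generator of $\sfD(\Mod(kG))$, the isomorphism $\Hom_{\sfD(\Mod(kG))}(kG, Y[n]) \cong H^n(Y)$ showing that $kG$ detects non-zero objects, so that $\Loc(kG) = \sfD(\Mod(kG))$.

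Pick a non-zero $X \in \mathsf{L}$. The tensor ideal hypothesis gives $X \otimes_k kG \in \mathsf{L}$, where $kG$ is the regular module in degree zero and the tensor product carries the diagonal $G$-action. The crucial first step is the standard ``untwisting'' isomorphism: the map
\[
\phi(x \otimes g) = g^{-1} x \otimes g
\]
is a chain isomorphism of $kG$-modules from $X \otimes_k kG$ with diagonal action to $X \otimes_k kG$ with action only on the second tensor factor. The latter is a complex of free $kG$-modules.

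Because $k$ is a field, the underlying complex of $k$-vector spaces of $X$ decomposes, up to contractible summands, as $\bigoplus_n H^n(X)[-n]$. After the untwisting, the $G$-action sits entirely on the $kG$-factor, so this decomposition persists after $\otimes_k kG$ to give an isomorphism
\[
X \otimes_k kG \;\simeq\; \bigoplus_n \bigl( H^n(X) \otimes_k kG \bigr)[-n]
\]
in $\sfD(\Mod(kG))$. Since $X \neq 0$, some $H^n(X)$ is non-zero, and then $H^n(X) \otimes_k kG$ is a non-zero free $kG$-module, containing $kG$ as a summand. Thus $kG[-n]$, and hence $kG$, lies in $\mathsf{L}$, completing the plan.

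The main obstacle is the first step: the observation that after $\otimes_k kG$, any non-zero object becomes a direct sum of shifted copies of $kG$. The untwisting identity renders the verification formal, but this reduction is what drives the whole argument; the remaining ingredients (closure of $\mathsf{L}$ under summands, shifts and coproducts, and compact generation of $\sfD(\Mod(kG))$ by $kG$) are standard.
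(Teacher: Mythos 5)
Your proof is correct and follows essentially the same route as the paper: both arguments tensor a non-zero object $X$ with $kG$, exploit the fact that (after untwisting the diagonal action) $X\otimes_k kG$ is a complex of free modules whose cohomology is free, split off a copy of $kG$, and then invoke compact generation of $\sfD(\Mod(kG))$ by $kG$. You have merely made explicit the untwisting isomorphism and the resulting decomposition $X\otimes_k kG \simeq \bigoplus_n (H^n(X)\otimes_k kG)[-n]$, which the paper leaves implicit in the phrase ``the homology of $kG\otimes_k X$ is thus non-zero and free'' together with the remark that a $kG$-summand of the homology splits off the complex.
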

\begin{proof}
  Let $\sfC$ be a non-zero tensor ideal localising subcategory. If $X$
  is a non-zero object in $\sfC$ then $X$ has non-zero homology. The
  homology of $kG\otimes_k X$ is thus non-zero and free. Any summand
  isomorphic to $kG$ of the homology splits off as a summand of
  $kG\otimes_k X$, and so $kG$ is in $\sfC$. But $kG$ generates the
  whole of $\sfD(\Mod(kG))$.
\end{proof}

It follows from this lemma that corresponding to any 
tensor ideal localising subcategory of $\StMod(kG)$, there are two
tensor ideal localising subcategories of $\KInj(kG)$, one of which
is contained in $\KacInj(kG)$ and the other of which is generated by
this together with the image of $\sfD(\Mod(kG))$ under $-\otimes_k pk$.

Correspondingly, we have a notion of support varieties for objects
in $\KInj(kG)$. The definitions involve exactly the same definitions
of functors $\Gamma_\fc$ and $L_\fc$ as for $\StMod(kG)$, and the
only difference is that if $X$ is an object in $\KInj(kG)$ then 
the origin $\{0\}$ is either in $\mcV_G(X)$ or not according as
the complex $X$ has homology or not.

This allows us to formulate the following version of the classification
theorem for $\KInj(kG)$, where the origin has lost its special role.

\begin{theorem}\label{th:loc-KInj}
There is a one to one correspondence between tensor ideal localising
subcategories of $\KInj(kG)$ and subsets of $\mcV_G$.
Under this correspondence, $\mcV$ corresponds to $\sfC_\mcV$.
\end{theorem}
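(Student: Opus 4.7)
The plan is to bootstrap from Theorem~\ref{th:localising} using the recollement
\[ \KacInj(kG) \to \KInj(kG) \to \sfD(\Mod(kG)) \]
together with the preceding lemma, which says the only tensor ideal localising subcategories of $\sfD(\Mod(kG))$ are zero and the whole category. The idea is that a tensor ideal localising subcategory $\sfS$ of $\KInj(kG)$ is determined by the pair consisting of its intersection $\sfS \cap \KacInj(kG)$ with the acyclic part and its image $j^*(\sfS)$ in $\sfD(\Mod(kG))$. Theorem~\ref{th:localising} parametrises the first by subsets of $\mcV_G \smallsetminus \{0\}$, while the lemma forces the second to be a binary datum corresponding to whether $\{0\}$ lies in the support.

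First, I would verify that $\sfC_\mcV$ is a tensor ideal localising subcategory of $\KInj(kG)$ for any $\mcV \subseteq \mcV_G$; this follows from the $\KInj$-analogue of Theorem~\ref{th:cVGM}. For injectivity of the correspondence, one needs for each $\fp \in \mcV_G$ a witness object in $\KInj(kG)$ with support exactly $\{\fp\}$. For $\fp \ne \{0\}$, the Tate equivalence $\StMod(kG) \simeq \KacInj(kG)$ transports $\Gamma_\fp(k)$ to such a witness. For $\fp = \{0\}$, the projective resolution $pk$ has nonzero homology, so $\{0\} \in \mcV_G(pk)$, while the triangle $pk \to ik \to tk$ with $tk \in \KacInj(kG)$ lets us compute $\Gamma_\fq(pk) = 0$ for $\fq \ne \{0\}$, so $\mcV_G(pk) = \{0\}$.

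For surjectivity, let $\sfS$ be a tensor ideal localising subcategory of $\KInj(kG)$ and consider its image $j^*(\sfS)$ under the quotient. By the preceding lemma this is either zero or all of $\sfD(\Mod(kG))$. In the first case $\sfS \subseteq \KacInj(kG)$, and Theorem~\ref{th:localising} gives a unique $\mcV \subseteq \mcV_G \smallsetminus \{0\}$ with $\sfS = \sfC_\mcV$. In the second case, I would argue that some $X \in \sfS$ has $j^*(X) \simeq k$, so that tensor ideality and the projection formula yield $pk \in \sfS$. Then for any $Y \in \sfS$, the recollement triangle $j_! j^*(Y) \to Y \to i_* i^*(Y)$ has first term in the localising subcategory $\langle pk\rangle$ generated by $pk$ (since $\sfD(\Mod(kG))$ is generated by $k$) and last term in $\sfS \cap \KacInj(kG)$ (since both $Y$ and $j_! j^*(Y)$ are in $\sfS$). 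Hence $\sfS = \langle pk \rangle \vee (\sfS \cap \KacInj(kG))$, and Theorem~\ref{th:localising} identifies this with $\sfC_{\mcV' \cup \{0\}}$ for a unique $\mcV' \subseteq \mcV_G \smallsetminus \{0\}$.

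The main obstacle will be verifying rigorously that a surjection of $\sfS$ onto $\sfD(\Mod(kG))$ forces $pk$ itself to lie in $\sfS$, and that the recollement triangle keeps all its terms inside $\sfS$ once this is known. This relies on $\KacInj(kG)$ being a smashing subcategory of $\KInj(kG)$, so that the functors $j_! j^*$ and $i_* i^*$ preserve coproducts and carry $\sfS$ into itself as soon as $pk \in \sfS$.
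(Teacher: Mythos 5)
Your route is genuinely different from the paper's. In the survey, Theorem~\ref{th:loc-KInj} is exactly the statement whose proof occupies Sections~8--15: it is restated as Theorem~\ref{th:strat} and proved by stratifying $\KInj(kG)$ directly, via stratification of $\sfD(S)$ for a graded polynomial ring, the BGG correspondence, the Koszul construction for elementary abelian $p$-groups, and Quillen stratification together with Chouinard's theorem. You instead consume Theorem~\ref{th:localising} as a black box and deduce the $\KInj$ statement from it using the recollement $\KacInj(kG)\to\KInj(kG)\to\sfD(\Mod(kG))$ and the lemma that $\sfD(\Mod(kG))$ has no nontrivial tensor ideal localising subcategories. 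Within the survey's narrative this is legitimate (Theorem~\ref{th:localising} is quoted from \cite{Benson/Iyengar/Krause:bik3}), and it is essentially an elaboration of the paper's own remark following that lemma, which already pairs each tensor ideal localising subcategory of $\StMod(kG)$ with two of $\KInj(kG)$. But note what each approach buys: the paper's route produces the stratification itself, and in the source papers the implication runs the other way ($\KInj(kG)$ is handled first and the $\StMod(kG)$ classification is read off on the acyclics); your argument is a reduction, a clean bookkeeping of the role of the origin, not an independent proof. You should also record that the Tate equivalence $\StMod(kG)\simeq\KacInj(kG)$ is monoidal with unit $tk$ (using $tk\otimes_k tk\cong tk$) and compatible with support, so that ``tensor ideal'' and the sets $\mcV\subseteq\mcV_G\smallsetminus\{0\}$ transport correctly, and that for $Y$ in $\KInj(kG)$ one has $\supp(Y)\smallsetminus\{0\}=\supp(Y\otimes_k tk)$, which is what pins down the subset attached to $\sfS\cap\KacInj(kG)$.

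Two steps need repair, both fixable. First, $\sfD(\Mod(kG))$ is generated by $kG$, not by $k$: the localising subcategory generated by $k$ is everything only when $G$ is a $p$-group. Consequently $j_!j^*(Y)=Y\otimes_k pk$ need not lie in the plain localising subcategory generated by $pk$. You do not need it to: $Y\otimes_k pk\in\sfS$ is immediate from tensor ideality, and for the reverse inclusion $\sfC_{\mcV'\cup\{0\}}\subseteq\sfS$ it suffices that $W\otimes_k pk$ lies in the tensor ideal localising subcategory generated by $pk$, which is obvious once $pk\in\sfS$. Second, your flagged ``main obstacle'' should not be approached by producing $X\in\sfS$ with $j^*(X)\simeq k$; indeed $j^*(\sfS)$ is not a priori a localising subcategory of $\sfD(\Mod(kG))$, so the lemma cannot be applied to it directly. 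Instead repeat the untwisting argument of the lemma inside $\KInj(kG)$: if some $X\in\sfS$ has nonzero homology, then $kG\otimes_k X\in\sfS$ is, up to a contractible summand, a coproduct of shifted copies of $kG$, so $kG\in\sfS$; since $k$ lies in $\Loc(kG)$ in $\sfD(\Mod(kG))$ and $j_!=-\otimes_k pk$ preserves coproducts and exact triangles, $pk=j_!(k)\in\Loc(j_!(kG))=\Loc(kG)\subseteq\sfS$. With these adjustments your two-case analysis (either $\sfS\subseteq\KacInj(kG)$, giving $\sfC_{\mcV'}$, or $pk\in\sfS$, giving $\sfC_{\mcV'\cup\{0\}}$), together with the witnesses $\Gamma_\fp(k)$ for $\fp\neq\fm$ and $pk$ for the origin, does yield the stated bijection.
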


This is the theorem whose proof we shall outline in the sections to follow.

\section{Support for triangulated categories}

At this stage, it is appropriate to describe the general setup
introduced in \cite{Benson/Iyengar/Krause:2008a} for discussing
support for objects in triangulated categories. This setup allows us
to move from one category to another without too much effort. Then the
game plan, which is inspired by the work of Avramov, Buchweitz,
Iyengar, and Miller~\cite{Avramov/Buchweitz/Iyengar/Miller:2010a}, is
as follows:
\begin{enumerate}
\item Reduce from a finite group to its elementary abelian subgroups.
\item Use a Koszul construction to move from complexes of modules over an
  elementary abelian group to differential graded modules over a
  graded exterior algebra.
\item Use a version of the BGG correspondence to move from a graded
  exterior algebra to a graded polynomial algebra.
\item Use a version of Neeman's classification \cite{Neeman:1992a} to
  deal directly with differential graded modules over graded
  polynomial algebras.
\end{enumerate}

\begin{defn}
  Let $\sfT$ be a compactly generated triangulated category with small
  coproducts. We write $\Sigma$ for the shift in $\sfT$ and $\sfT^c$
  for the full subcategory of compact objects in $\sfT$.

We write $Z^*(\sfT)$ for the
\emph{graded centre} of $\sfT$. Namely, $Z^*(\sfT)$ is the graded 
ring whose degree $n$ component $Z^n(\sfT)$ 
is the set of natural transformations
\[ \eta\colon\Id_\sfT\to\Sigma^n \] 
satisfying 
\[ \eta\Sigma=(-1)^n\Sigma\eta. \]
Then $Z^*(\sfT)$ is a 
\emph{graded commutative ring}, in the
sense that for $x,y\in Z^*(\sfT)$ we have
\[ yx=(-1)^{|x||y|}xy. \] 

Let $R$ be a graded commutative Noetherian ring.
We say that $\sfT$ is an \emph{$R$-linear triangulated category} if
we are given a homomorphism of graded commutative rings 
$\phi\colon R\to Z^*(\sfT)$. 
This amounts to giving, for each object $X$ in $\sfT$, a homomorphism
of graded rings 
\[ 
\phi_X\colon R\to \End_\sfT^*(X), 
\]
such that for $X$ and $Y$ objects in $\sfT$,  the two induced actions of an element $r\in R$ on 
$\alpha\in\Hom_\sfT^*(X,Y)$ via $\phi_{X}$ and $\phi_{Y}$ are related by
\[ 
\phi_Y(r)\alpha=(-1)^{|r||\alpha|}\alpha\phi_X(r). 
\]
\end{defn}

\begin{eg}
If $\sfT=\StMod(kG)$ then $\sfT^c=\stmod(kG)$ and $\Sigma=\Omega^{-1}$.
In this case, we take $R=H^*(G,k)$. Although it might seem more 
natural to use Tate cohomology $\hat H^*(G,k)=\sHom^*_{kG}(k,k)$,
the problem is that this ring is usually not Noetherian. This
is related to the awkward role of the origin in the theory of 
the support varieties for $\StMod(kG)$.

If $\sfT=\KInj(kG)$ then $\sfT^c=\sfD^b(\mmod(kG))$ and $\Sigma$ is 
the usual shift. In this case, we also take $R=H^*(G,k)$. In this
case, $R$ is just the graded endomorphism ring of the tensor identity $ik$,
and the origin no longer has an awkward role.
\end{eg}

We write $\Spec R$ for the set of \emph{homogeneous} prime ideals in
$R$.  For each prime $\fp\in\Spec R$ and each graded $R$-module $M$,
we denote by $M_\fp$ the graded localisation of $M$ at $\fp$.

If $\mcV$ is a specialisation closed subset of $\Spec R$, we set
\[ \sfT_\mcV = \{X\in \sfT\mid \Hom_\sfT^*(C,X)_\fp=0 \text{ for all } 
C\in\sfT^c,\,\fp\in\Spec R\smallsetminus \mcV\}. \]
This is a localising subcategory of $\sfT$, and there is a
localisation functor $L_\mcV\colon\sfT\to\sfT$ such that $L_\mcV(X)=0$
if and only if $X$ is in $\sfT_\mcV$. We then define $\Gamma_\mcV(X)$
by completing $X\to L_\mcV(X)$ to a triangle:
\[ \Gamma_\mcV(X) \to X \to L_\mcV(X). \]

\begin{eg}\label{eg:StMod}
In the case where $\sfT$ is either 
$\StMod(kG)$ or $\KInj(kG)$, and $R=H^*(G,k)$, 
this is exactly Rickard's triangle for the subset $\mcV$ of $\Spec H^*(G,k)$.
\end{eg}

As in Section~\ref{se:Rickard}, if $\fp\in\Spec R$ we choose
specialisation closed subsets
$\mcV$ and $\mcW$ of $\Spec R$ satisfying $\mcV\not\subseteq\mcW$ and
$\mcV\subseteq\mcW\cup{\fp}$ and define
\[ \Gamma_\fp=L_\mcW\Gamma_\mcV=\Gamma_\mcV L_\mcW. \]
Again, this is independent of choice of $\mcV$ and $\mcW$ with the
given properties.

\begin{defn}
If $X$ is an object in $\sfT$ then the \emph{support} of $X$ is
the subset
\[ \supp_R(X)=\{\fp\in\Spec R \mid \Gamma_\fp(X)\ne 0\}. \]
This is a subset of
\[ \supp_R(\sfT)=\{\fp\in\Spec R\mid \Gamma_\fp(\sfT)\ne 0\}\,
\subseteq \,\Spec R. \]
\end{defn}

\begin{eg}
Continuing Example~\ref{eg:StMod}, this notion of support  agrees
with $\mcV_G(X)$ as defined in Section~\ref{se:Rickard}. We have
\begin{align*} 
\supp_{H^*(G,k)}(\StMod(kG))&=\mcV_G\smallsetminus \{0\}, \\
\supp_{H^*(G,k)}(\KInj(kG))&=\mcV_G. 
\end{align*} 
\end{eg}

The following theorem summarises the properties of support for
$R$-linear triangulated categories; confer Theorems~\ref{th:VGM} and
\ref{th:cVGM}. Proofs can be found in
\cite{Benson/Iyengar/Krause:2008a}.

\begin{theorem}
Let $X$, $Y$ and $Z$ be objects in $\sfT$. Then
\begin{enumerate}
\item $X= 0$ if and only if $\supp_R(X)=\varnothing$.
\item $\supp_R(\Sigma X)=\supp_R(X)$.
\item For a small family of objects $X_\alpha$ we have
\[ \supp_R(\bigoplus_\alpha X_\alpha)=\bigcup_\alpha \supp_R(X_\alpha). \]
\item If $X\to Y \to Z$ is a triangle in $\sfT$ then
\[ \supp_R(Y) \subseteq \supp_R(X) \cup \supp_R(Z). \]
\item For $\mcV\subseteq\Spec R$ we have
\begin{align*}
\supp_R(\Gamma_\mcV(X))&=\mcV\cap\supp_R(X)\\
\supp_R(L_\mcV(X))&=(\Spec R\smallsetminus\mcV)\cap \supp_R(X).
\end{align*}
\item If $\cl(\supp_R(X))\cap \supp_R(Y)=\varnothing$
then $\Hom_\sfT(X,Y)=0$. 
\end{enumerate}
\end{theorem}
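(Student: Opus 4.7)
The plan is to derive parts (ii)--(v) from the defining properties of $\Gamma_\fp$ together with the standard Bousfield localisation structure of $\Gamma_\mcV$ and $L_\mcV$, to handle (i) by a compact-object detection argument, and to obtain (vi) as an immediate consequence of (i), (v), and localisation orthogonality. The main obstacle will be (i), since everything else rests on it.

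First I would record that $\Gamma_\mcV$ and $L_\mcV$ are exact functors on $\sfT$ preserving small coproducts, the standard structure of a Bousfield localisation on a compactly generated triangulated category, cf.\ \cite[Corollary~6.5]{Benson/Iyengar/Krause:2008a}. Hence so is $\Gamma_\fp=L_\mcW\Gamma_\mcV$. Part (ii) is immediate from the commutation of $\Gamma_\fp$ with $\Sigma$; part (iii) from coproduct preservation; and part (iv) by applying $\Gamma_\fp$ to the triangle $X\to Y\to Z$ and using that two vanishing vertices force the third.

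For part (v), I would establish the orthogonality
\[
\Gamma_\fp\Gamma_\mcV\cong\begin{cases}\Gamma_\fp & \text{if } \fp\in\mcV,\\ 0 & \text{if } \fp\notin\mcV.\end{cases}
\]
Writing $\Gamma_\fp=L_\mcW\Gamma_{\mcV'}$ and exploiting the freedom in $\mcV',\mcW$, one handles $\fp\in\mcV$ by arranging $\mcV'\subseteq\mcV$, so that $\Gamma_{\mcV'}\Gamma_\mcV\cong\Gamma_{\mcV'}$, and $\fp\notin\mcV$ by arranging $\mcV\subseteq\mcW$, so that $L_\mcW\Gamma_\mcV=0$ and hence $\Gamma_\fp\Gamma_\mcV\cong\Gamma_{\mcV'}L_\mcW\Gamma_\mcV=0$ after commuting $L_\mcW$ past $\Gamma_{\mcV'}$. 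The equality $\supp_R\Gamma_\mcV(X)=\mcV\cap\supp_R(X)$ follows at once, and the analogue for $L_\mcV(X)$ drops out of its defining triangle together with part (iv).

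The hard part is (i). The direction $X=0\Rightarrow\supp_R(X)=\varnothing$ is clear, so assume $\Gamma_\fp(X)=0$ for every $\fp\in\Spec R$. Since $\sfT$ is compactly generated, $X=0$ will follow once $\Hom^*_\sfT(C,X)=0$ for every $C\in\sfT^c$. If not, this is a nonzero graded $R$-module and, as $R$ is Noetherian, admits an associated prime $\fp$. The key technical input, proved in \cite{Benson/Iyengar/Krause:2008a}, identifies $\Hom^*_\sfT(C,\Gamma_\fp X)$ with the $\fp$-local cohomology of the $\fp$-localisation of $\Hom^*_\sfT(C,X)$; since $\fp$ is associated to the latter module, this group is nonzero, whence $\Gamma_\fp(X)\ne 0$, contradicting the hypothesis. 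Setting up this compact-object detection principle --- linking localisation in $\sfT$ with localisation of graded $R$-modules --- is the crux. Once it is in place, (vi) follows quickly. Setting $\mcV=\cl(\supp_R(X))$, part (v) gives $\supp_R L_\mcV(X)=\supp_R(X)\setminus\mcV=\varnothing$, so (i) yields $L_\mcV(X)=0$ and thus $X\cong\Gamma_\mcV(X)$. The hypothesis forces $\mcV\cap\supp_R(Y)=\varnothing$, so by the same argument $Y\cong L_\mcV(Y)$. The Bousfield structure provides $\Hom_\sfT(Z,L_\mcV W)=0$ for every $Z\in\sfT_\mcV$ and every $W$; applied with $Z=\Gamma_\mcV(X)$ and $W=Y$ this yields $\Hom_\sfT(X,Y)=0$.
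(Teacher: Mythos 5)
Your argument reconstructs the proof that the paper outsources to \cite{Benson/Iyengar/Krause:2008a}; the survey itself records the statement without proof, and your route — exactness and coproduct preservation of $\Gamma_\fp$ for (ii)--(iv), the orthogonality relations $\Gamma_\fp\Gamma_\mcV\cong\Gamma_\fp$ or $0$ for (v), compact-object detection via associated primes for (i), and Bousfield orthogonality for (vi) — is exactly the one used there.

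The only place you are a little too quick is the $L_\mcV$ half of (v): part (iv) together with the defining triangle $\Gamma_\mcV X\to X\to L_\mcV X$ gives only the containment $\supp_R(L_\mcV X)\subseteq\supp_R(X)$, which is not the asserted equality. You still need the companion orthogonality, namely $\Gamma_\fp L_\mcV=0$ for $\fp\in\mcV$ and $\Gamma_\fp L_\mcV\cong\Gamma_\fp$ for $\fp\notin\mcV$. This does follow from what you proved, but via the \emph{natural} isomorphism $\Gamma_\fp\Gamma_\mcV\xrightarrow{\ \sim\ }\Gamma_\fp$ (for $\fp\in\mcV$) rather than the mere equality of supports: applying $\Gamma_\fp$ to the defining triangle and using that the first map is an isomorphism (respectively zero) identifies $\Gamma_\fp L_\mcV$ as $0$ (respectively $\Gamma_\fp$). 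It is worth spelling this out, since ``support of two terms agree'' does not in general force the third term of a triangle to vanish.
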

Here, $\cl(\mcV)$ denotes the specialisation closure of a subset $\mcV$.

\section{Tensor triangulated categories}
\label{se:tensor}

In this section we consider compactly generated triangulated
categories with some extra structure. Namely, we want to consider an
internal tensor product
\[
\otimes\colon \sfT\times\sfT\to\sfT, 
\] 
exact in each variable, preserving small coproducts, and with a unit $\one$.  It then follows (using the
Brown representability theorem) that there are \emph{function objects}
$\fHom(X,Y)$ in $\sfT$ with natural isomorphisms
\[ \Hom_\sfT(X\otimes Y,Z)\cong \Hom_\sfT(X,\fHom(Y,Z)). \]
We define the \emph{dual} of an object $X$ in $\sfT$ to be
\[ X^\vee=\fHom(X,\one). \]

\begin{defn}
We say that $(\sfT,\otimes,\one)$ is 
a \emph{tensor triangulated category} if the following hold:
\begin{enumerate}
\item $\sfT$ is a compactly generated triangulated category with
small coproducts.
\item The tensor product $\otimes$ and unit $\one$ make $\sfT$ 
a symmetric monoidal category.
\item The tensor product is exact in each variable and preserves
small coproducts.
\item The unit $\one$ is compact.
\item Compact objects are \emph{strongly dualisable} in the
sense that if $C$ is compact and $X$ is any object in $\sfT$ then
the canonical map
\[ C^\vee\otimes X \to \fHom(C,X) \]
is an isomorphism.
\end{enumerate}
\end{defn}
See \cite{Benson/Iyengar/Krause:2008a,Benson/Iyengar/Krause:bik4} for further discussion of this structure. 
 
\begin{eg}
Both $\StMod(kG)$ and $\KInj(kG)$ are tensor triangulated categories.
\end{eg}

The symmetric monoidal structure ensures that the endomorphism ring of the tensor identity $\End^*_\sfT(\one)$ is a graded commutative ring. Given any object $X$, this ring acts on $\End^*_\sfT(X)$ via
\[ 
\End^*_\sfT(\one)\xrightarrow{X\otimes -}\End^*_\sfT(X). 
\]
So if $R$ is a graded commutative Noetherian  ring, a homomorphism  $R\to\End^*_\sfT(\one)$ gives $\sfT$ a structure of an $R$-linear category. 

\begin{defn}
We say that the action of $R$ on $\sfT$ is
\emph{canonical} if it arises from a homomorphism $R\to\End^*_\sfT(\one)$
as described in the previous paragraph.
\end{defn}

\begin{prop}
If the action of $R$ on $\sfT$ is canonical then for each specialisation
closed subset $\mcV\subseteq\Spec R$ and each prime $\fp\in\Spec R$
there are natural isomorphisms
\[ \Gamma_\mcV(X)\cong X\otimes\Gamma_\mcV(\one),\qquad
L_\mcV(X)\cong X \otimes L_\mcV(\one),\qquad
\Gamma_\fp(X)\cong X \otimes \Gamma_\fp(\one). \]
\end{prop}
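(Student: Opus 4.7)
The isomorphisms will be obtained by recognising the triangle produced by tensoring the defining triangle of $\Gamma_\mcV(\one)$ with $X$ as the Rickard localisation triangle for $X$, and then reducing the $\Gamma_\fp$ case formally.

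To begin, I would apply the exact functor $X\otimes-$ to the defining triangle $\Gamma_\mcV(\one)\to\one\to L_\mcV(\one)$. Using the unit isomorphism $X\otimes\one\cong X$, this yields a triangle
\[ X\otimes\Gamma_\mcV(\one)\longrightarrow X\longrightarrow X\otimes L_\mcV(\one). \]
Uniqueness of the Bousfield decomposition guarantees that any triangle $A\to X\to B$ with $A\in\sfT_\mcV$ and $B$ right orthogonal to $\sfT_\mcV$ is canonically isomorphic to $\Gamma_\mcV(X)\to X\to L_\mcV(X)$. So it suffices to check that (a) $X\otimes\Gamma_\mcV(\one)$ lies in $\sfT_\mcV$, and (b) $\Hom_\sfT(Z,X\otimes L_\mcV(\one))=0$ for every $Z\in\sfT_\mcV$.

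Both (a) and (b) follow once one knows that $\sfT_\mcV$ is a tensor ideal in $\sfT$ (which also forces its right orthogonal to be a tensor ideal). This is where the canonical hypothesis is essential: because the $R$-action factors through $\End^*_\sfT(\one)$, the two $R$-module structures on $\Hom^*_\sfT(C,X\otimes Y)$ induced via the tensor factors coincide, so localising at a prime commutes with the tensor structure. Concretely, for a compact $C$ strong dualisability gives $\fHom(C,X\otimes Y)\cong C^\vee\otimes X\otimes Y$, and the resulting identification $\Hom^*_\sfT(C,X\otimes Y)\cong\Hom^*_\sfT(\one,C^\vee\otimes X\otimes Y)$ permits the vanishing $\Hom^*_\sfT(C,Y)_\fp=0$ defining $Y\in\sfT_\mcV$ to be transported to $X\otimes Y$, yielding (a). For (b), the subcategory of objects right orthogonal to $\sfT_\mcV$ contains $L_\mcV(\one)$ by construction and is a tensor ideal by the same mechanism, hence contains $X\otimes L_\mcV(\one)$.

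With the isomorphisms $\Gamma_\mcV(X)\cong X\otimes\Gamma_\mcV(\one)$ and $L_\mcV(X)\cong X\otimes L_\mcV(\one)$ established, the third isomorphism is purely formal. Choose specialisation closed sets $\mcV,\mcW$ with $\mcV\smallsetminus\mcW=\{\fp\}$, so that $\Gamma_\fp=L_\mcW\Gamma_\mcV$. Then applying the first two isomorphisms in turn gives
\[ \Gamma_\fp(X)=L_\mcW\Gamma_\mcV(X)\cong L_\mcW(X\otimes\Gamma_\mcV(\one))\cong X\otimes L_\mcW\Gamma_\mcV(\one)=X\otimes\Gamma_\fp(\one). \]
The main obstacle is the tensor ideal property of $\sfT_\mcV$: its definition only tests against compact objects whereas $X$ is arbitrary, so the argument must combine strong dualisability, which moves the compact test object past $X$, with the canonical compatibility between the $R$-action and the tensor product, which is precisely the hypothesis of the proposition.
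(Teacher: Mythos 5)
Your high-level strategy---tensor the idempotent triangle for $\one$ with $X$, then invoke uniqueness of the Bousfield decomposition---is the right starting point and is essentially the approach taken in \cite{Benson/Iyengar/Krause:2008a}. However, the verification that $X\otimes\Gamma_\mcV(\one)\in\sfT_\mcV$ and that $X\otimes L_\mcV(\one)$ is $L_\mcV$-local has a genuine gap. Your ``transport'' argument for (a) rewrites $\Hom^*_\sfT(C,X\otimes Y)$ as $\Hom^*_\sfT(\one,C^\vee\otimes X\otimes Y)$, but this does not reduce to the known vanishing $\Hom^*_\sfT(D,Y)_\fp=0$ defining $Y\in\sfT_\mcV$: the object $C^\vee\otimes X$ is not compact for general $X$, so the defining condition never applies. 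The transport closes only when $X$ itself is compact (then $C^\vee\otimes X$ is compact and one may test against it). Similarly, the claim that the right orthogonal of $\sfT_\mcV$ is ``a tensor ideal by the same mechanism'' is unfounded: the orthogonal of a tensor ideal localising subcategory is in general not a tensor ideal, and the mechanism you describe again requires moving an arbitrary $X$ across a $\Hom$, which strong dualisability does not license.

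What is missing are two ingredients that carry the actual weight of the proof. First, for $\mcV$ specialisation closed, $\sfT_\mcV$ is compactly generated by $\sfT^c\cap\sfT_\mcV$, so $L_\mcV$ is a smashing localisation; in particular $\Gamma_\mcV$ and $L_\mcV$ preserve coproducts. Second, canonicality enters via endomorphism rings of compacts: for $C\in\sfT^c\cap\sfT_\mcV$ the ring $\End^*_\sfT(C)$ is $\mcV$-torsion (take $D=C$ in the definition of $\sfT_\mcV$), and the $R$-action on $\Hom^*_\sfT(D,X\otimes C)$ factors through $\phi_C\colon R\to\End^*_\sfT(C)$ because $\phi_{X\otimes C}=\Id_X\otimes\phi_C$; hence $X\otimes C\in\sfT_\mcV$. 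This proves the desired isomorphisms when $X$ is compact, and the class of $X$ for which the natural comparison maps are isomorphisms is then a localising subcategory (using that $\Gamma_\mcV$, $L_\mcV$ preserve coproducts) containing the compacts, hence all of $\sfT$. Without the compact-generation/smashing step, neither the passage from compact to arbitrary $X$ nor the tensor-ideal property of $\sfT_\mcV$ itself can be established, so the argument as written does not close.
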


\section{The local-global principle}
\label{se:loc-glob}

If $X$ is an object or collection of objects in a triangulated category $\sfT$, we write $\Loc_\sfT(X)$ for the localising subcategory of $\sfT$ generated by $X$. 

\begin{defn}
Let $\sfT$ be an $R$-linear triangulated category with small coproducts. We say that the \emph{local-global principle holds} for $\sfT$ if for each object $X$ in $\sfT$ we have
\[ 
\Loc_\sfT(X)=\Loc_\sfT(\{\Gamma_\fp(X)\mid \fp\in\supp_R(\sfT)\}). 
\]
\end{defn}

This condition is usually satisfied, because of the following theorem
\cite[Corollary~3.5]{Benson/Iyengar/Krause:bik2}:

\begin{theorem}
The local-global principle holds provided $R$ has finite Krull dimension.
\end{theorem}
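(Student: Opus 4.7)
The plan is to prove both inclusions of the desired equality, writing $\sfC := \Loc_\sfT(\{\Gamma_\fp(X) \mid \fp\in\supp_R(\sfT)\})$. The inclusion $\sfC\subseteq\Loc_\sfT(X)$ is immediate: each $\Gamma_\fp(X) = L_\mcW\Gamma_\mcV(X)$ is assembled from $X$ by two applications of the defining triangle $\Gamma_\mcU(Y)\to Y\to L_\mcU(Y)$, and both the fibre and cofibre of that triangle lie in $\Loc_\sfT(Y)$. So the real content of the theorem is the reverse inclusion $X\in\sfC$, and this is where the finite Krull dimension hypothesis enters.

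Set $d=\dim R$ and filter the spectrum by the specialisation closed subsets
\[
\mcV_i=\{\fq\in\Spec R\mid \dim R/\fq\le i\},\qquad -1\le i\le d,
\]
with $\mcV_{-1}=\varnothing$ and $\mcV_d=\Spec R$. This filtration is finite precisely because $d<\infty$, and the associated triangles
\[
\Gamma_{\mcV_{i-1}}(X) \to \Gamma_{\mcV_i}(X) \to L_{\mcV_{i-1}}\Gamma_{\mcV_i}(X)
\]
assemble $X = \Gamma_{\mcV_d}(X)$ from $0 = \Gamma_{\mcV_{-1}}(X)$ in $d+1$ steps. Downward induction on $i$ reduces the problem to showing that each cofibre $Y_i := L_{\mcV_{i-1}}\Gamma_{\mcV_i}(X)$ lies in $\sfC$. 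The support of $Y_i$ sits inside the antichain $W_i = \mcV_i\smallsetminus\mcV_{i-1}$ of primes $\fp$ with $\dim R/\fp = i$; a short calculation using the commutation $L_\mcV\Gamma_\mcW = \Gamma_\mcW L_\mcV$ for specialisation closed $\mcV,\mcW$ gives $\Gamma_\fp(Y_i)=\Gamma_\fp(X)$ for each $\fp\in W_i$, so that it would be enough to know that $Y_i$ is built in $\sfT$ from its local pieces $\Gamma_\fp(Y_i)$.

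This last point is the main obstacle and the heart of the argument: the antichain decomposition asserting that if $Y\in\sfT$ has $\supp_R(Y)$ contained in an antichain $W\subseteq\Spec R$, then $Y\in\Loc_\sfT(\{\Gamma_\fp(Y)\mid \fp\in W\})$. The key observation is that for each $\fp\in W$ the set $\cl(\{\fp\})$ meets $W$ only in $\fp$, so in the triangle $\Gamma_{\cl(\{\fp\})}(Y)\to Y\to L_{\cl(\{\fp\})}(Y)$ the fibre is forced to be supported at $\{\fp\}$ and in fact equals $\Gamma_\fp(Y)$, while the cofibre is supported on $W\smallsetminus\{\fp\}$. For finite $W$ one simply peels off primes one at a time; for general $W$ one expresses $Y$ as a homotopy colimit over the directed system of finite subsets of $W$, using that $\Gamma_\mcU$ and $L_\mcU$ preserve small coproducts (\cite[Corollary~6.5]{Benson/Iyengar/Krause:2008a}) and that incomparable primes do not interact in the support calculus. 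Combined with the dimension filtration, this places every $Y_i$ in $\sfC$, hence $X\in\sfC$, completing the proof.
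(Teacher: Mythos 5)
Your overall architecture is the right one, and in fact it is close in spirit to the argument behind the cited result \cite[Corollary~3.5]{Benson/Iyengar/Krause:bik2} (the survey itself gives no proof): an induction along the filtration of $\Spec R$ by coheight, reducing to objects whose support lies in an antichain of primes. The easy inclusion and the finite-antichain peeling are fine (though note that even the ``immediate'' direction silently uses that $\Gamma_\mcV(X)$ and $L_\mcV(X)$ lie in $\Loc_\sfT(X)$, which is true but not formal --- it rests on the Koszul-object constructions of \cite{Benson/Iyengar/Krause:2008a}, not merely on the defining triangle). The genuine gap is exactly at the step you yourself call the heart of the argument: the sets $W_i$ of primes with $\dim R/\fp=i$ are typically infinite (all height-one primes of a two-dimensional ring, say), and for infinite $W$ you assert that $Y$ is ``a homotopy colimit over the directed system of finite subsets of $W$''. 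In a triangulated category, homotopy colimits over an arbitrary directed poset are not part of the available structure (only sequential ones are, and the poset of finite subsets of an uncountable $W$ has no countable cofinal chain); and even after fixing some construction, the claim that the comparison map to $Y$ is an isomorphism is precisely the content to be proved. The phrase ``incomparable primes do not interact in the support calculus'' is not an argument, so as written the crucial case is asserted rather than established.

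The good news is that the antichain case has a direct proof requiring no colimits and no finiteness of $W$, so your skeleton can be completed. Suppose $\supp_R(Y)\subseteq W$ with $W$ an antichain, and consider the natural map $\bigoplus_{\fp\in W}\Gamma_{\cl(\{\fp\})}(Y)\to Y$ given by the counits. Since $\supp_R(\Gamma_{\cl(\{\fp\})}Y)=\cl(\{\fp\})\cap\supp_R(Y)\subseteq\{\fp\}$ (this is where the antichain condition enters), applying $\Gamma_\fq$ --- which is exact and preserves coproducts by \cite[Corollary~6.5]{Benson/Iyengar/Krause:2008a} --- annihilates every summand with $\fp\ne\fq$, while on the summand $\fp=\fq$ it gives the canonical isomorphism $\Gamma_\fq\Gamma_{\cl(\{\fq\})}(Y)\cong\Gamma_\fq(Y)$; for $\fq\notin W$ both source and target vanish because $\supp_R(Y)\subseteq W$. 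Hence the cone of the map has empty support and is therefore zero by the detection property of support, so $Y\cong\bigoplus_{\fp\in W}\Gamma_\fp(Y)$, which visibly lies in $\Loc_\sfT(\{\Gamma_\fp(X)\})$ once you combine it with your identification $\Gamma_\fp(Y_i)\cong\Gamma_\fp(X)$. Substituting this for the homotopy-colimit step, your dimension filtration finishes the proof; note also that running the induction on $\dim\supp_R(X)$ rather than on $\dim R$ gives the stronger object-wise statement proved in \cite{Benson/Iyengar/Krause:bik2}.
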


For a tensor triangulated category $\sfT$, we modify the definition slightly.
If $X$ is an object or collection of objects in $\sfT$, we write 
$\Loc^\otimes_\sfT(X)$ for the tensor ideal localising subcategory
of $\sfT$ generated by $X$. The following theorem says that
for a tensor triangulated category the appropriate analogue of
the local-global principle always holds
\cite[Theorem~7.2]{Benson/Iyengar/Krause:bik2}:

\begin{theorem}\label{th:BIK2:7.2}
Let $\sfT$ be an $R$-linear 
tensor triangulated category with canonical $R$-action. 
Then for each object $X$ in $\sfT$ the following holds:
\[ \Loc^\otimes_\sfT(X)=\Loc^\otimes_\sfT(\{\Gamma_\fp(X)\mid\fp\in\Spec R\}). \]
\end{theorem}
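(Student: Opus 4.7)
The plan is to establish the equality by proving both inclusions, with the first immediate and the second following from a support calculation. For ``$\supseteq$'', the proposition in Section~\ref{se:tensor} gives $\Gamma_\fp(X) \cong X \otimes \Gamma_\fp(\one)$, and the tensor-ideal localising subcategory generated by $X$ is by definition closed under tensoring with any object of $\sfT$; hence each $\Gamma_\fp(X)$ lies in $\Loc^\otimes_\sfT(X)$.

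For the non-trivial inclusion write $\sfS=\Loc^\otimes_\sfT(\{\Gamma_\fp(X) : \fp \in \Spec R\})$, and consider the Bousfield triangle
\[ \Gamma_\sfS(X) \to X \to L_\sfS(X). \]
The key structural input I would invoke is that, because $\sfS$ is tensor-ideal in a compactly generated tensor triangulated category whose unit is compact, the localisation $L_\sfS$ is \emph{smashing}: there is an idempotent $L_\sfS(\one)$ with $L_\sfS(Y) \cong Y \otimes L_\sfS(\one)$ for every $Y \in \sfT$. For a compact (hence strongly dualisable) $Y$ this is immediate: for any $Z \in \sfS$ one has $\Hom_\sfT(Z, Y \otimes L_\sfS(\one)) \cong \Hom_\sfT(Z \otimes Y^\vee, L_\sfS(\one))$, and $Z \otimes Y^\vee$ lies in $\sfS$ by tensor-ideality while $L_\sfS(\one) \in \sfS^\perp$, so the group vanishes; the extension to arbitrary $Y$ is then a standard consequence of $\sfT$ being compactly generated.

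Granted the smashing property, the proof concludes quickly. For each $\fp$ we have $\Gamma_\fp(X) \in \sfS$ by construction, and therefore $L_\sfS \Gamma_\fp(X) = 0$. Since the action of $R$ is canonical, the previous proposition together with the smashing formula yields
\[ \Gamma_\fp(L_\sfS X) \cong L_\sfS(X) \otimes \Gamma_\fp(\one) \cong L_\sfS\bigl(X \otimes \Gamma_\fp(\one)\bigr) \cong L_\sfS \Gamma_\fp(X) = 0. \]
Hence $\supp_R(L_\sfS X) = \varnothing$, and by part~(i) of the support theorem of Section~8 this forces $L_\sfS X = 0$. Consequently $X \cong \Gamma_\sfS(X) \in \sfS$, which is what was to be shown.

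The main obstacle is the smashing assertion for non-compact test objects. The dualisable case is a one-line computation, but upgrading it to all of $\sfT$ is where the hypotheses of compact generation and compactness of the unit genuinely do work; everything else in the proof is then a formal manipulation of the canonical $R$-action and the support principle that a non-zero object must have non-empty support.
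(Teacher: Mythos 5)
The proposal contains a genuine gap at the step you yourself flag as ``the main obstacle'': the claim that $L_\sfS$ is smashing because $\sfS$ is a tensor ideal localising subcategory of a compactly generated tensor triangulated category with compact unit. That implication is precisely the (generalised) telescope conjecture, and it is false in this generality. Already in $\sfD(\Mod R)$ for $R$ commutative Noetherian of positive Krull dimension, \emph{every} localising subcategory is tensor ideal (since the unit $R$ is a compact generator), yet by Neeman's classification only those corresponding to specialisation closed subsets of $\Spec R$ are smashing; for instance $\Loc(k(\fp))$ at a non-closed point $\fp$ is a tensor ideal localising subcategory that is not smashing. So the hypotheses you invoke do not yield the conclusion.

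The ``extension to arbitrary $Y$'' step is where this surfaces. Your argument for compact $Y$ is fine: dualisability gives $Y\otimes L_\sfS(\one)\in\sfS^\perp$, and tensor ideality gives $Y\otimes\Gamma_\sfS(\one)\in\sfS$, so $L_\sfS(Y)\cong Y\otimes L_\sfS(\one)$. But to pass from compact $Y$ to arbitrary $Y$ you would need $\sfS^\perp$ to be closed under coproducts, and that closure is exactly what ``smashing'' means --- the extension step is circular rather than a formal consequence of compact generation. Without it, the crucial isomorphism $L_\sfS(X)\otimes\Gamma_\fp(\one)\cong L_\sfS(X\otimes\Gamma_\fp(\one))$ is unavailable, and the computation $\Gamma_\fp(L_\sfS X)=0$ does not follow.

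The paper's route (Theorem~7.2 of \cite{Benson/Iyengar/Krause:bik2}) avoids any smashing claim: one reduces to showing $\one\in\Loc(\{\Gamma_\fp\one\mid\fp\in\Spec R\})$, which exploits the fact that $\one$ is \emph{compact} (so its support behaves like that of a perfect complex and admits a Noetherian-induction style argument over $\Spec R$), and then transfers to an arbitrary $X$ by applying the exact coproduct-preserving functor $X\otimes-$, using $\Gamma_\fp X\cong X\otimes\Gamma_\fp\one$ from the canonical action. Your overall strategy of ``show $\supp_R(L_\sfS X)=\varnothing$'' is a reasonable-looking reduction, but the ingredient needed to run it is not available; replacing the smashing claim by the compact-unit reduction is the missing idea.
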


When the local-global principle holds, the classification of localising
subcategories can be achieved one prime at a time. The way to express this
is via the following maps:
\[ \left\{\begin{gathered}
\text{Localising}\\ \text{subcategories of $\sfT$}
\end{gathered}\; \right\} 
\xymatrix@C=3pc {\ar@<1ex>[r]^-{{\sigma}} & \ar@<1ex>[l]^-{{\tau}}}
\left\{
\begin{gathered}
  \text{Families $(\sfS(\fp))_{\fp\in\supp_R(\sfT)}$ with $\sfS(\fp)$ a}\\
  \text{localising subcategory of $\Gamma_\fp(\sfT)$}
\end{gathered}\;\right\} \] 
where $\sigma(\sfS)=(\sfS\cap\Gamma_\fp(\sfT))$ and 
$\tau(\sfS(\fp))=\Loc_\sfT(\sfS(\fp)\mid\fp\in\supp_R(\sfT))$. The following
is \cite[Proposition~3.6]{Benson/Iyengar/Krause:bik2}.

\begin{theorem}\label{th:BIK2:3.6}
If the local-global principle holds then the maps $\sigma$ and
$\tau$ are mutually inverse bijections.
\end{theorem}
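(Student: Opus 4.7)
The plan is to verify that $\tau\sigma$ and $\sigma\tau$ are both identities, making essential use of the local-global principle and the two basic facts about the functors $\Gamma_\fp$: first, that each $\Gamma_\fp$ is exact and preserves small coproducts; and second, that $\Gamma_\fq\Gamma_\fp=0$ for $\fp\neq \fq$, while $\Gamma_\fq$ restricts to the identity on $\Gamma_\fq(\sfT)$.

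For $\tau\sigma=\Id$, fix a localising subcategory $\sfS$. The inclusion $\tau\sigma(\sfS)\subseteq\sfS$ is immediate, since each $\sfS\cap\Gamma_\fp(\sfT)$ lies in $\sfS$. For the reverse inclusion, I take $X\in\sfS$ and apply the local-global principle, obtaining
\[
  X\in\Loc_\sfT(X)=\Loc_\sfT(\Gamma_\fp X\mid\fp\in\supp_R(\sfT)).
\]
In particular each $\Gamma_\fp X$ belongs to $\Loc_\sfT(X)$, hence to $\sfS$. Since $\Gamma_\fp X$ also lies in $\Gamma_\fp(\sfT)$, it is a member of $\sfS\cap\Gamma_\fp(\sfT)$. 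Therefore $X$ lies in $\Loc_\sfT$ of the union of these intersections, which is $\tau\sigma(\sfS)$.

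For $\sigma\tau=\Id$, fix a family $(\sfS(\fp))_\fp$ and set $\sfS=\Loc_\sfT(\bigcup_\fp\sfS(\fp))$. The inclusion $\sfS(\fq)\subseteq\sfS\cap\Gamma_\fq(\sfT)$ is immediate. The reverse inclusion requires the following subsidiary claim: if $Y\in\Loc_\sfT(\mcC)$ for some class $\mcC$, then $\Gamma_\fq Y\in\Loc_\sfT(\Gamma_\fq\mcC)$. This follows by considering $\{Y\in\sfT:\Gamma_\fq Y\in\Loc_\sfT(\Gamma_\fq\mcC)\}$, which is a localising subcategory of $\sfT$ (here exactness of $\Gamma_\fq$ and its preservation of coproducts are essential) and evidently contains $\mcC$. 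Applying this with $\mcC=\bigcup_\fp\sfS(\fp)$ gives
\[
  \Gamma_\fq Y\in\Loc_\sfT\bigl(\,\textstyle\bigcup_\fp \Gamma_\fq\sfS(\fp)\,\bigr).
\]
Since $\sfS(\fp)\subseteq\Gamma_\fp(\sfT)$, we have $\Gamma_\fq\sfS(\fp)=0$ for $\fp\neq\fq$ and $\Gamma_\fq\sfS(\fq)=\sfS(\fq)$. Thus $\Gamma_\fq Y\in\Loc_\sfT(\sfS(\fq))=\sfS(\fq)$, the last equality because $\sfS(\fq)$ is already localising. Now for $X\in\sfS\cap\Gamma_\fq(\sfT)$ we have $X\cong\Gamma_\fq X\in\sfS(\fq)$, completing the argument.

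The main obstacle is the subsidiary claim that $\Gamma_\fq$ transports localising subcategories generated by a class $\mcC$ into localising subcategories generated by $\Gamma_\fq\mcC$; once this is in hand, the proof is a formal exercise in combining the orthogonality relations $\Gamma_\fq\Gamma_\fp=0$ ($\fp\neq\fq$) with the local-global principle. I expect no technical surprises beyond the careful invocation of exactness and coproduct preservation of $\Gamma_\fp$, both of which follow from the triangle $\Gamma_\mcV\to\Id\to L_\mcV$ and the fact that $L_\mcV$ is a localisation functor.
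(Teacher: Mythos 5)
Your proof is correct and is essentially the standard argument (the survey itself only cites \cite[Proposition~3.6]{Benson/Iyengar/Krause:bik2} rather than proving the statement): $\tau\sigma=\Id$ comes from reading the local-global principle as an equality of localising subcategories, and $\sigma\tau=\Id$ from the observation that $\{Y\mid \Gamma_\fq Y\in\Loc_\sfT(\Gamma_\fq\mcC)\}$ is localising together with the orthogonality relations $\Gamma_\fq\Gamma_\fp=0$ for $\fp\neq\fq$ and idempotence of $\Gamma_\fq$. The only point you leave tacit is that $\sfS(\fq)$, a localising subcategory of $\Gamma_\fq(\sfT)$, is also localising as a subcategory of $\sfT$ (since $\Gamma_\fq(\sfT)$ is closed under triangles and coproducts in $\sfT$), which is what justifies the equality $\Loc_\sfT(\sfS(\fq))=\sfS(\fq)$.
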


In other words, specifying a localising subcategory of $\sfT$ is
equivalent to specifying a localising subcategory of $\Gamma_\fp(\sfT)$
for each prime $\fp\in\supp_R(\sfT)$.

Since the tensor ideal version of the local-global principle always
holds in a tensor triangulated category, we have the following theorem.

\begin{theorem}
Let $\sfT$ be an $R$-linear tensor triangulated category with
canonical $R$-action. Then the maps $\sigma$ and $\tau$ above 
induce mutually 
inverse bijections between the 
tensor ideal localising subcategories of $\sfT$
and families $(\sfS(\fp))_{\fp\in\supp_R(\sfT)}$ with $\sfS(\fp)$ a
tensor ideal localising subcategory of $\Gamma_\fp(\sfT)$.
\end{theorem}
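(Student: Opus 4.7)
The plan is to follow the template of Theorem~\ref{th:BIK2:3.6} (which was proved assuming the plain local-global principle holds) but substitute the tensor ideal version of the local-global principle given by Theorem~\ref{th:BIK2:7.2}, which is available unconditionally in any $R$-linear tensor triangulated category with canonical action. Since Theorem~\ref{th:BIK2:7.2} is stated for the tensor ideal closure $\Loc^\otimes_\sfT$, I expect everything to go through in the tensor ideal setting with the same formal scaffolding.

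First I would check that $\sigma$ and $\tau$ restrict correctly. For $\sigma$: if $\sfS$ is a tensor ideal localising subcategory of $\sfT$, then for $X\in\sfS\cap\Gamma_\fp(\sfT)$ and $Y\in\sfT$, the proposition of Section~\ref{se:tensor} gives $X\otimes Y\cong Y\otimes\Gamma_\fp(X)\cong\Gamma_\fp(X\otimes Y)$, which lies in $\sfS\cap\Gamma_\fp(\sfT)$; so this intersection is tensor ideal inside $\Gamma_\fp(\sfT)$. For $\tau$: if each $\sfS(\fp)$ is tensor ideal in $\Gamma_\fp(\sfT)$ (in the sense of being closed under tensoring with objects of $\sfT$), then the union $\bigcup_\fp\sfS(\fp)$ is closed under tensor with objects of $\sfT$; since $\otimes$ preserves coproducts and triangles, the localising subcategory $\Loc_\sfT\bigl(\bigcup_\fp\sfS(\fp)\bigr)$ is automatically tensor ideal, and therefore coincides with $\Loc^\otimes_\sfT\bigl(\bigcup_\fp\sfS(\fp)\bigr)$.

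Next, for $\tau\circ\sigma=\Id$: if $\sfS$ is tensor ideal localising, then trivially $\tau(\sigma(\sfS))\subseteq\sfS$. For the reverse inclusion, take $X\in\sfS$; by Theorem~\ref{th:BIK2:7.2},
\[ X\in\Loc^\otimes_\sfT\bigl(\{\Gamma_\fp(X)\mid\fp\in\Spec R\}\bigr). \]
Each $\Gamma_\fp(X)\cong X\otimes\Gamma_\fp(\one)$ lies in $\sfS$ (because $\sfS$ is tensor ideal) and in $\Gamma_\fp(\sfT)$, hence in $\sigma(\sfS)(\fp)$; so $X\in\tau(\sigma(\sfS))$. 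For $\sigma\circ\tau=\Id$: given a family $(\sfS(\fp))$ of tensor ideal localising subcategories, the inclusion $\sfS(\fq)\subseteq\tau(\sfS)\cap\Gamma_\fq(\sfT)$ is clear. For the reverse, take $Y\in\tau(\sfS)\cap\Gamma_\fq(\sfT)$. Applying $\Gamma_\fq$ kills $\Gamma_\fp(\sfT)$ for $\fp\ne\fq$ (since $\Gamma_\fq\Gamma_\fp=0$ for $\fp\ne\fq$), so $\Gamma_\fq$ sends $\tau(\sfS)$ into the localising subcategory of $\Gamma_\fq(\sfT)$ generated by $\sfS(\fq)$, which is $\sfS(\fq)$ itself; since $Y\cong\Gamma_\fq(Y)$, we conclude $Y\in\sfS(\fq)$.

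The main subtlety I anticipate is the careful justification that $\Loc_\sfT$ of a union of tensor ideals equals $\Loc^\otimes_\sfT$ of that union, so that Theorem~\ref{th:BIK2:7.2} can be invoked in the form needed; this hinges on the canonical action hypothesis, because it is through the isomorphism $\Gamma_\fp(X\otimes Y)\cong X\otimes\Gamma_\fp(Y)$ that tensor products interact compatibly with the local pieces $\Gamma_\fp(\sfT)$. Everything else is a straightforward transcription of the bookkeeping in the proof of Theorem~\ref{th:BIK2:3.6}.
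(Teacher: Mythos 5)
Your proposal is correct and follows essentially the same route as the paper, which deduces the theorem by running the bookkeeping of Theorem~\ref{th:BIK2:3.6} with the tensor ideal local-global principle of Theorem~\ref{th:BIK2:7.2} substituted for the plain one. The details you fill in (the identification $\Loc_\sfT=\Loc^\otimes_\sfT$ on unions of tensor ideal pieces via the canonical action, and $\Gamma_\fq\Gamma_\fp=0$ for $\fp\ne\fq$) are exactly the steps the paper leaves implicit.
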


\begin{remark}
If the tensor identity $\one$ generates $\sfT$ then every
localising subcategory of $\sfT$ is tensor ideal. In the case
of $\StMod(kG)$ and $\KInj(kG)$ this holds if and only
if $G$ is a finite $p$-group.
\end{remark}

\section{Stratifying triangulated categories}

In the last section, we showed how to classify localising subcategories
of an $R$-linear triangulated category $\sfT$ one prime at a time. The
easiest case is where each $\Gamma_\fp(\sfT)$ with $\fp\in\supp_R(\sfT)$,
is a minimal subcategory.

\begin{defn}
We say that a localising subcategory of $\sfT$ is \emph{minimal} if
it is non-zero and has no proper non-zero localising subcategories.

We say that $\sfT$ is \emph{stratified} by $R$ if the local-global
principle holds, and each $\Gamma_\fp(\sfT)$ with $\fp\in\supp_R\sfT$ 
is a minimal localising subcategory.
\end{defn}

Suppose that $\sfT$ is stratified by $R$, and consider the maps
$\sigma$ and $\tau$ of Section~\ref{se:loc-glob}. 
To name a family 
$(\sfS(\fp))_{\fp\in\supp_R(\sfT)}$ with $\sfS(\fp)$ a localising
subcategory of $\Gamma_\fp(\sfT)$, we just have to name the
set of $\fp\in\supp_R(\sfT)$ for which $\sfS(\fp)=\Gamma_\fp(\sfT)$, since the
remaining primes $\fp$ will satisfy $\sfS(\fp)=0$.
So the following is a direct consequence of Theorem~\ref{th:BIK2:3.6}.

\begin{theorem}
Let $\sfT$ be an $R$-linear triangulated category. If $\sfT$ is
stratified by $R$ then the maps $\sigma$ and $\tau$  
establish a bijection between the
localising subcategories of $\sfT$ and the subsets of $\supp_R(\sfT)$.
\end{theorem}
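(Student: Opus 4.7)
The plan is to combine the definition of stratification with the preceding Theorem~\ref{th:BIK2:3.6} in a completely formal way. Since $\sfT$ is stratified by $R$, the local-global principle holds, so Theorem~\ref{th:BIK2:3.6} already provides a bijection
\[
\sigma\colon\{\text{localising subcategories of }\sfT\}\;\longleftrightarrow\;\bigl\{\text{families }(\sfS(\fp))_{\fp\in\supp_R(\sfT)}\text{ with }\sfS(\fp)\subseteq\Gamma_\fp(\sfT)\text{ localising}\bigr\}\,,
\]
with inverse $\tau$. The only remaining task is to identify the right-hand side with the power set $2^{\supp_R(\sfT)}$ using the minimality hypothesis.

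First I would observe that, by the minimality clause in the definition of stratification, for every $\fp\in\supp_R(\sfT)$ the category $\Gamma_\fp(\sfT)$ admits exactly two localising subcategories, namely $0$ and $\Gamma_\fp(\sfT)$ itself. Consequently, each family $(\sfS(\fp))_{\fp\in\supp_R(\sfT)}$ appearing on the right-hand side is completely specified by the subset
\[
U(\sfS):=\{\fp\in\supp_R(\sfT)\mid \sfS(\fp)=\Gamma_\fp(\sfT)\}\subseteq\supp_R(\sfT),
\]
and conversely every such subset $U$ produces a legitimate family by setting $\sfS(\fp)=\Gamma_\fp(\sfT)$ for $\fp\in U$ and $\sfS(\fp)=0$ otherwise. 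This gives a bijection between families and subsets of $\supp_R(\sfT)$.

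Composing with the bijection of Theorem~\ref{th:BIK2:3.6} yields the required bijection between localising subcategories of $\sfT$ and subsets of $\supp_R(\sfT)$. Under this composite, a localising subcategory $\sfS$ is sent to $\{\fp\in\supp_R(\sfT)\mid \sfS\cap\Gamma_\fp(\sfT)\neq 0\}$, and a subset $U$ is sent back to $\Loc_\sfT(\Gamma_\fp(\sfT)\mid\fp\in U)$; this is exactly the content of the maps $\sigma$ and $\tau$ as already described before the statement.

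There is no real obstacle: the entire content has been packaged into Theorem~\ref{th:BIK2:3.6} and the definition of stratification, so the proof reduces to the observation that minimality collapses the parameter space of local families to a power set. The only mild care needed is to check that the bijection obtained by composition coincides with $\sigma$ and $\tau$ as written, but this is immediate from unwinding the definitions.
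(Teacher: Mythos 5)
Your argument is correct and is essentially the paper's own: the paper likewise deduces the statement directly from Theorem~\ref{th:BIK2:3.6}, noting that since each $\Gamma_\fp(\sfT)$ is minimal, a family $(\sfS(\fp))_{\fp\in\supp_R(\sfT)}$ is determined by the set of primes with $\sfS(\fp)=\Gamma_\fp(\sfT)$, the rest being zero. Your identification of the composite maps with $\sigma$ and $\tau$ matches the paper's description as well.
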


In the case of an $R$-linear tensor triangulated category $\sfT$, because the
corresponding local-global principle is automatic (Theorem~\ref{th:BIK2:7.2}),
we say that $\sfT$ is \emph{stratified} by $R$ if for each 
$\fp\in\supp_R(\sfT)$,
$\Gamma_\fp(\sfT)$ is minimal as a tensor ideal localising subcategory.

\begin{theorem}
Let $\sfT$ be an $R$-linear tensor triangulated category. If $\sfT$ is
stratified by $R$ then the maps $\sigma$ and $\tau$  
establish a bijection between the tensor ideal
localising subcategories of $\sfT$ and the subsets of $\supp_R(\sfT)$.
\end{theorem}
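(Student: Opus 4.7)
The plan is to derive this directly from the tensor ideal version of Theorem~\ref{th:BIK2:3.6} that was stated in the previous section, namely the statement that for an $R$-linear tensor triangulated category $\sfT$ with canonical $R$-action, the maps $\sigma$ and $\tau$ restrict to mutually inverse bijections between the tensor ideal localising subcategories of $\sfT$ and families $(\sfS(\fp))_{\fp\in\supp_R(\sfT)}$ where $\sfS(\fp)$ is a tensor ideal localising subcategory of $\Gamma_\fp(\sfT)$. That bijection itself rests on Theorem~\ref{th:BIK2:7.2} and so is automatic in the tensor triangulated setting.

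Granting this, the stratification hypothesis does all the remaining work. By definition, $\sfT$ stratified by $R$ means that for every $\fp\in\supp_R(\sfT)$ the category $\Gamma_\fp(\sfT)$ is minimal as a tensor ideal localising subcategory. Therefore the only tensor ideal localising subcategories of $\Gamma_\fp(\sfT)$ are $0$ and $\Gamma_\fp(\sfT)$ itself. Consequently, a family $(\sfS(\fp))_{\fp\in\supp_R(\sfT)}$ of the kind parametrised above is determined by the set
\[ U=\{\fp\in\supp_R(\sfT)\mid \sfS(\fp)\ne 0\}, \]
and conversely every subset $U\subseteq\supp_R(\sfT)$ arises from exactly one such family, given by $\sfS(\fp)=\Gamma_\fp(\sfT)$ for $\fp\in U$ and $\sfS(\fp)=0$ otherwise. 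Thus the set of admissible families is in canonical bijection with the power set of $\supp_R(\sfT)$.

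Composing the two bijections yields the assignment
\[ \sfS\longmapsto \{\fp\in\supp_R(\sfT)\mid \sfS\cap\Gamma_\fp(\sfT)\ne 0\}=\supp_R(\sfS), \]
with inverse $U\mapsto \Loc^\otimes_\sfT(\Gamma_\fp(\one)\mid \fp\in U)$. Since the argument is entirely formal once the two inputs (the tensor ideal local-global bijection and the minimality built into the definition of stratification) are in hand, there is no real obstacle: the only point to check is that the passage between a family and the associated subset is inverse to itself, and this is immediate from the dichotomy $\sfS(\fp)\in\{0,\Gamma_\fp(\sfT)\}$. The proof of the non-tensor analogue in the previous theorem proceeds by exactly the same formal reduction, so no separate argument is needed here.
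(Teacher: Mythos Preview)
Your argument is correct and is exactly the one the paper has in mind: the paper states the theorem as an immediate consequence of the tensor ideal bijection between localising subcategories and families $(\sfS(\fp))$, together with the minimality of each $\Gamma_\fp(\sfT)$ built into the definition of stratification, and you have spelled out precisely this reduction.
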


In order to prove stratification in a given situation, we need
a criterion for minimality of localising subcategories. The
following can be found in Lemma~4.1 of \cite{Benson/Iyengar/Krause:bik2}
and Lemma~3.9 of \cite{Benson/Iyengar/Krause:bik3}.

\begin{lemma}\label{le:test}
\begin{enumerate}
\item  Let $\sfT$ be an $R$-linear triangulated category.
A non-zero localising subcategory $\sfS$ of $\sfT$ is minimal if and
only if for all non-zero objects $X$ and $Y$ in $\sfS$ we have
$\Hom^*_\sfT(X,Y)\ne 0$.

\item Let $\sfT$ be an $R$-linear tensor triangulated category.
A non-zero tensor ideal localising subcategory $\sfS$ of 
$\sfT$ is minimal if and only if for all non-zero objects $X$ and $Y$
in $\sfS$ there exists an object $Z$ in $\sfT$ such that 
$\Hom^*_\sfT(X\otimes Z,Y)\ne 0$. The object $Z$ may be taken to be
a compact generator for $\sfT$.
\end{enumerate}
\end{lemma}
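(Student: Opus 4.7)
The plan is to prove the two statements by the same pattern, constructing auxiliary localising subcategories defined by Hom-orthogonality and then invoking the minimality hypothesis.

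For the forward direction of (i), fix non-zero $X,Y\in\sfS$ and consider
\[
\sfS_Y=\{W\in\sfS\mid \Hom^*_\sfT(W,Y)=0\}.
\]
This is closed under shifts and cones (from the long exact sequence) and under coproducts (since a Hom out of a coproduct is a product of Homs), so it is a localising subcategory of $\sfS$. If $X$ lay in $\sfS_Y$ then $\sfS_Y$ would be non-zero, minimality would force $\sfS_Y=\sfS$, hence $Y\in\sfS_Y$ and $\Hom^*_\sfT(Y,Y)=0$, so $Y=0$, a contradiction. For the reverse direction, let $\sfS'$ be any non-zero localising subcategory of $\sfS$, pick a non-zero $X\in\sfS'$, and set $\sfS''=\Loc_\sfT(X)\subseteq\sfS'$. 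Since $\sfS''$ is generated by a single object, the Bousfield localisation functor $L$ with kernel $\sfS''$ exists and produces, for each $Y\in\sfT$, a triangle $\Gamma(Y)\to Y\to L(Y)$ with $\Gamma(Y)\in\sfS''$ and $\Hom^*_\sfT(W,L(Y))=0$ for all $W\in\sfS''$. For $Y\in\sfS$ we have $L(Y)\in\sfS$ because $\sfS$ is triangulated and contains $\Gamma(Y)$. Then $\Hom^*_\sfT(X,L(Y))=0$ together with the hypothesis forces $L(Y)=0$, i.e.\ $Y\in\sfS''\subseteq\sfS'$. Hence $\sfS'=\sfS$.

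For (ii), the forward direction uses instead
\[
\sfS_Y=\{W\in\sfS\mid \Hom^*_\sfT(W\otimes Z,Y)=0\text{ for every }Z\in\sfT\}.
\]
This is a tensor ideal localising subcategory of $\sfS$: closure under coproducts is as before (using that $\otimes$ preserves coproducts in each variable), and closure under $-\otimes Z'$ is immediate since $(W\otimes Z')\otimes Z=W\otimes(Z'\otimes Z)$. Minimality then gives $\sfS_Y=\sfS$ whenever $X\in\sfS_Y$, which upon choosing $Z=\one$ yields $\Hom^*_\sfT(Y,Y)=0$, absurd. Thus there exists some $Z$ with $\Hom^*_\sfT(X\otimes Z,Y)\ne 0$. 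To upgrade $Z$ to a compact generator $Z_0$, observe that $\{Z\in\sfT\mid\Hom^*_\sfT(X\otimes Z,Y)=0\}$ is a localising subcategory of $\sfT$ (again since $\otimes$ preserves coproducts); if it contained $Z_0$ it would equal $\sfT$, contradicting the line just proved. Hence $\Hom^*_\sfT(X\otimes Z_0,Y)\ne 0$. The reverse direction of (ii) is the tensor-ideal analogue of the reverse direction of (i): replace $\Loc_\sfT(X)$ by $\Loc^\otimes_\sfT(X)$, use the associated Bousfield localisation, note $L(Y)\in\sfS$ and $\Hom^*_\sfT(X\otimes Z_0,L(Y))=0$ (since $X\otimes Z_0\in\Loc^\otimes_\sfT(X)$), and invoke the hypothesis to conclude $L(Y)=0$.

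The main point to take care of is the existence of the Bousfield localisation attached to $\Loc_\sfT(X)$ (respectively $\Loc^\otimes_\sfT(X)$); this is where the standing framework of compactly generated $R$-linear triangulated categories is used, since a localising subcategory generated by a set of objects automatically admits such a localisation. Everything else reduces to routine verifications that the candidate subcategories $\sfS_Y$ are closed under the relevant operations, together with a careful application of the fact that the tensor identity $\one$ is compact and that $-\otimes Z$ preserves coproducts and triangles.
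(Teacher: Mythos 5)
Your proof is correct and is essentially the argument of the paper's cited sources (Lemma~4.1 of \cite{Benson/Iyengar/Krause:bik2} and Lemma~3.9 of \cite{Benson/Iyengar/Krause:bik3}): Hom-orthogonal subcategories inside $\sfS$ for the forward directions, and the Bousfield localisation attached to $\Loc_\sfT(X)$, respectively $\Loc^\otimes_\sfT(X)$, for the converses. The only step worth making explicit is that $\Loc^\otimes_\sfT(X)$ coincides with $\Loc_\sfT(\{X\otimes C\mid C\in\sfT^c\})$, hence is generated by a set, so the localisation functor you invoke does exist; note also that your orthogonality gives $\Hom^*_\sfT(X\otimes Z,L(Y))=0$ for \emph{every} $Z$, which is what the existential hypothesis in (ii) actually requires.
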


We can now restate Theorems~\ref{th:localising} and \ref{th:loc-KInj}.

\begin{theorem}\label{th:strat}
The tensor triangulated categories $\StMod(kG)$ and
$\KInj(kG)$ are stratified by $H^*(G,k)$.
\end{theorem}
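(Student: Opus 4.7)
The plan is to use the strategy outlined at the start of Section~8. Since $\StMod(kG)$ and $\KInj(kG)$ are $R$-linear tensor triangulated categories with canonical $R$-action for $R=H^*(G,k)$, the tensor ideal local-global principle holds automatically by Theorem~\ref{th:BIK2:7.2}. It therefore suffices to show that for each $\fp\in\supp_R(\sfT)$ the subcategory $\Gamma_\fp(\sfT)$ is minimal among tensor ideal localising subcategories. By Lemma~\ref{le:test}(ii) this reduces to verifying: for any two non-zero $X,Y\in\Gamma_\fp(\sfT)$, there is an object $Z$, taken to be a compact generator, with $\Hom^*_\sfT(X\otimes Z,Y)\ne 0$. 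The statement for $\StMod(kG)$ will follow from the one for $\KInj(kG)$ via the equivalence $\StMod(kG)\simeq\KacInj(kG)$, since this identifies the primes $\fp\ne\{0\}$ of $\supp_R\KInj(kG)$ with those of $\supp_R\StMod(kG)$.

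First I would reduce to elementary abelian $p$-subgroups. Quillen stratification expresses each $\fp\in\Spec H^*(G,k)$ as the image of some $\fq\in\Spec H^*(E,k)$ under restriction for an elementary abelian $p$-subgroup $E\le G$, and Chouinard's theorem detects projectivity on such subgroups. Using that the restriction functor $\res^G_E\colon\KInj(kG)\to\KInj(kE)$ is a faithful test for vanishing modulo projectives, is tensor-compatible, and intertwines the respective $\Gamma_\fp$ functors with $\Gamma_\fq$ (summed over primes lying over $\fp$), the minimality question for $(G,\fp)$ reduces to the same question for $(E,\fq)$.

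Second, I would handle an elementary abelian $p$-group $E=(\bZ/p)^r$ via a Koszul/BGG model. In characteristic two, $kE$ is an exterior algebra, the cochains are formal, and the BGG correspondence yields a tensor-compatible equivalence between $\KInj(kE)$ and an appropriate homotopy category of DG modules over the polynomial algebra $H^*(E,k)=k[x_1,\dots,x_r]$ with generators in degree one. In odd characteristic, $kE$ is a complete intersection but its cochains are not formal; here the Koszul construction alluded to in Section~6 replaces $kE$ by an exterior DG algebra on which the BGG correspondence can be applied. Either way the problem becomes one of stratifying the derived category of DG modules over the graded polynomial ring $R=H^*(E,k)$ by $R$ itself. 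For this, one constructs a genuine graded residue field $k(\fp)$ at each $\fp\in\Spec R$, uses these field objects as in Neeman~\cite{Neeman:1992a}, and verifies the non-vanishing required by Lemma~\ref{le:test}(ii) by tensoring with the compact generator $R$ and reading off a non-zero map from the residue field structure.

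The main obstacle is the odd-characteristic elementary abelian case: because the cochains on $kE$ are not formal, one cannot apply BGG directly, and the Koszul-type replacement must be carried out carefully so that it respects both the symmetric monoidal structure and the canonical $H^*(E,k)$-action used to define support, so that the subsequent appeal to Neeman's argument over the polynomial ring transports back to the required non-vanishing of $\Hom^*_{\KInj(kE)}(X\otimes Z,Y)$. Once this is secured, the Quillen--Chouinard reduction of the first step propagates the minimality statement from elementary abelian subgroups to an arbitrary finite group $G$, yielding Theorem~\ref{th:strat}.
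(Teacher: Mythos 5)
Your proposal follows essentially the same route as the paper: the tensor ideal local-global principle via Theorem~\ref{th:BIK2:7.2}, reduction to minimality of $\Gamma_\fp(\sfT)$ via Lemma~\ref{le:test}(ii), the Quillen--Chouinard descent to elementary abelian $p$-subgroups (using Frobenius reciprocity with an injective resolution of $k(G/E)$, Theorem~\ref{th:Quillen}, Theorem~\ref{th:subgroup-E} and Theorem~\ref{th:Chouinard}), and the Koszul--BGG--Neeman chain $\KInj(kE)\leftrightarrow\KInj(A)\leftrightarrow\KInj(\Lambda)\leftrightarrow\sfD(S)$ with field objects over the graded polynomial ring. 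The one subtlety you correctly anticipate but do not resolve is that the Koszul replacement forces the restricted--Lie diagonal $\Delta(z_i)=z_i\otimes 1+1\otimes z_i$ on $kE$ rather than the group diagonal $\Delta(g_i)=g_i\otimes g_i$, and these give different canonical actions; the paper handles this by Lemma~\ref{le:two-actions}, which shows the two stratification statements coincide because $\one$ generates $\KInj(kE)$ and the two actions agree on $\End^*(\one)$.
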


In the next few sections, we outline the proof of this theorem.
A lot of details will be swept under the carpet here, but are spelt out
in \cite{Benson/Iyengar/Krause:bik3}.

\section{Graded polynomial algebras}

The first step in proving Theorem~\ref{th:strat} is to stratify the
derived category of differential graded modules a polynomial ring. Let
$S$ be a graded polynomial algebra over the field $k$ on a finite
number of indeterminates. If $k$ does not have characteristic two, we
assume that the degrees of the indeterminates are even, so that $S$ is
graded commutative.

We view $S$ as a \emph{differential graded}
(abbreviated to dg) algebra with zero differential,
and we write $\sfD(S)$ for the derived category of dg $S$-modules. 
The objects of this category are the dg $S$-modules, 
and the morphisms are homotopy classes of degree preserving
chain maps with quasi-isomorphisms inverted. See for example 
Keller \cite{Keller:1994a} for further details.

The category $\sfD(S)$ is a tensor triangulated category in which the
tensor product is the derived tensor product over $S$, $X\otimes^\bL_S
Y$ and the tensor identity is $S$, which is a compact generator for
$\sfD(S)$. In particular, every localising subcategory is tensor
ideal. The canonical action of $S$ on $\sfD(S)$ makes $\sfD(S)$ into
an $S$-linear tensor triangulated category. The following theorem is a
dg analogue of the theorem of Neeman \cite{Neeman:1992a}, and is
proved in Theorem~5.2 of \cite{Benson/Iyengar/Krause:bik3}. The
existence of field objects plays a crucial role in the proof.

\begin{theorem}
The category $\sfD(S)$ is stratified by the canonical $S$-action. So the
maps $\sigma$ and $\tau$ of Section~\ref{se:loc-glob} are mutually
inverse bijections between the localising subcategories of $\sfD(S)$
and subsets of $\Spec S$.
\end{theorem}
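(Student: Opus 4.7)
The plan is to verify the two conditions defining stratification: that the local-global principle holds for $\sfD(S)$ with its canonical $S$-action, and that $\Gamma_\fp(\sfD(S))$ is a minimal localising subcategory for each $\fp \in \Spec S$. The local-global principle is immediate, because $S$ is a graded polynomial ring on finitely many generators, so has finite Krull dimension, and the theorem from Section~\ref{se:loc-glob} applies. Since the tensor unit $S$ is a compact generator of $\sfD(S)$, every localising subcategory is automatically tensor ideal, so the two notions of minimal localising subcategory coincide and Lemma~\ref{le:test} applies to either formulation.

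For minimality, I need to show that for each $\fp \in \Spec S$ and each pair of non-zero objects $X, Y$ in $\Gamma_\fp(\sfD(S))$, the graded Hom $\Hom^*_{\sfD(S)}(X, Y)$ is non-zero. The main construction is a \emph{residue field} object $k(\fp) \in \sfD(S)$ for each prime. It is built by forming the Koszul complex $K = K(x_1, \ldots, x_r; S)$ on a homogeneous generating sequence for $\fp$, and then graded-localising at $\fp$. The homotopy of $k(\fp)$ is then the graded residue field $\kappa(\fp) = S_\fp/\fp S_\fp$, which is a graded field because $S$ is graded polynomial.

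The properties to verify are: (a) $k(\fp)$ is supported precisely on $\{\fp\}$, so lies in $\Gamma_\fp(\sfD(S))$; (b) $k(\fp)$ generates $\Gamma_\fp(\sfD(S))$ as a localising subcategory; and (c) for any non-zero $X$ in $\Gamma_\fp(\sfD(S))$, the homotopy $H^*(X \otimes^\bL_S k(\fp))$ is a non-zero graded $\kappa(\fp)$-module, hence free, so that $X \otimes^\bL_S k(\fp)$ is equivalent in $\sfD(S)$ to a non-zero coproduct of shifts of $k(\fp)$. Granting these, for non-zero $X, Y$ in $\Gamma_\fp(\sfD(S))$ both $X \otimes^\bL_S k(\fp)$ and $Y \otimes^\bL_S k(\fp)$ are non-zero coproducts of shifts of $k(\fp)$, so their Hom is non-zero, and a short devissage using the tensor--hom adjunction over the graded field $\kappa(\fp)$ yields $\Hom^*_{\sfD(S)}(X, Y) \neq 0$.

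The main obstacle is the construction of $k(\fp)$ and the verification of (a)--(c). The delicate point is (c): one must show that non-vanishing of $X \otimes^\bL_S k(\fp)$ is controlled by the graded-field structure on $\kappa(\fp)$, which requires care with graded localisation and, in characteristic two, with Koszul generators of odd degree. Once these technicalities are settled, the argument follows the pattern of Neeman~\cite{Neeman:1992a}.
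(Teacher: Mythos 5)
Your overall strategy --- residue field objects at each homogeneous prime, splitting of $X\otimes^\bL_S k(\fp)$ into shifts, plus the observation that the local-global principle is free (finite Krull dimension) and that all localising subcategories are tensor ideal because the unit $S$ is a compact generator --- is exactly the route of the proof this survey defers to (Theorem~5.2 of \cite{Benson/Iyengar/Krause:bik3}, whose reliance on field objects the paper explicitly flags). But there is a genuine gap in your construction of $k(\fp)$. A homogeneous prime of a graded polynomial ring is in general not a complete intersection, so a homogeneous generating sequence of $\fp$ has more elements than the height of $\fp$; the Koszul complex on such a sequence, localised at $\fp$, does not have homotopy $\kappa(\fp)=S_\fp/\fp S_\fp$ but rather $\kappa(\fp)$ tensored with an exterior algebra on the excess generators (the localised Koszul complex on the surviving minimal generators resolves $\kappa(\fp)$, and the redundant generators contribute exterior classes). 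This is fatal for your step (c) as written: the decomposition of $X\otimes^\bL_S k(\fp)$ into shifts of $k(\fp)$ needs $k(\fp)$ to be a genuine field object, i.e.\ a dg algebra whose homotopy is a graded field; over a dg algebra with exterior-algebra homotopy the derived category is emphatically not ``simple'' --- that is the whole point of the BGG section of this paper --- so dg modules there do not split into shifts of the free module.

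The repair is precisely the feature that makes the polynomial case the base case of the whole programme: since $S$ carries the zero differential, the graded residue field $\kappa(\fp)$ is an honest graded $S$-algebra, and one may take $k(\fp)$ to be $\kappa(\fp)$ itself, viewed as a dg $S$-module by restriction along $S\to\kappa(\fp)$ (equivalently, if you insist on a Koszul construction, choose homogeneous elements of $\fp$ whose images form a regular system of parameters of the graded-local ring $S_\fp$, not an arbitrary generating sequence of $\fp$). With that object, your (a)--(c) are correct, because every dg module over a graded field is a coproduct of shifts of the field. One further caution: your closing ``short devissage using the tensor--hom adjunction'' is where the real work lies, and the adjunction alone does not carry you from $\Hom^*_{\sfD(S)}(X\otimes^\bL_S k(\fp),\,Y\otimes^\bL_S k(\fp))\neq0$ to $\Hom^*_{\sfD(S)}(X,Y)\neq0$. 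The standard way to finish is: the nonzero sum of shifts $X\otimes^\bL_S\kappa(\fp)$ lies in $\Loc_{\sfD(S)}(X)$ (tensor-ideality), hence $\kappa(\fp)\in\Loc_{\sfD(S)}(X)$; combined with the fact that every object of $\Gamma_\fp\sfD(S)$ lies in $\Loc_{\sfD(S)}(\kappa(\fp))$ (your (b)), this shows every nonzero object of $\Gamma_\fp\sfD(S)$ generates it, which is minimality directly, and the Hom-nonvanishing of Lemma~\ref{le:test} then comes along for free.
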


\section{A BGG correspondence}

The second step in the proof of Theorem~\ref{th:strat} is to use
a version of the BGG correspondence to transfer the stratification
from  polynomial rings to exterior algebras.

Let $k$ be a field and let $\Lambda$ be an exterior algebra
on indeterminates $\xi_1,\dots,\xi_c$ of negative odd degrees.
We view $\Lambda$ as a dg algebra with zero differential. 
Let $S$ be a graded polynomial algebra on variables
$x_1,\dots,x_c$ with $|x_i|=-|\xi_i|+1$. Let $J$ be the
dg $\Lambda\otimes_k S$-module with
\[ J^\natural = \Hom_k(\Lambda,k)\otimes_k S \]
and 
\[ d(f\otimes s)=\sum_i \xi_i f \otimes x_i s. \]
If $M$ is a dg $\Lambda$-module then $\Hom_\Lambda(J,M)$
is in a natural way a dg $S$-module.

In general, for a dg algebra $A$, we write $A^\natural$ for the
underlying graded algebra of $A$, forgetting the differential.
If $M$ is a dg $A$-module, we write $M^\natural$ for the
underlying graded $A^\natural$-module. We say that a dg $A$-module $I$ is
\emph{graded-injective} if $I^\natural$ is injective in the category of
graded $A^\natural$-module. Finally, we write $\KInj(A)$ for the
homotopy category of graded-injective dg $A$-modules.

The following version of the BGG correspondence
\cite{Bernstein/Gelfand/Gelfand:1978a} extends the one from
\cite{Avramov/Buchweitz/Iyengar/Miller:2010a}.

\begin{theorem}\label{th:BGG}
The functor 
\[ \Hom_\Lambda(J,-) \colon \KInj(\Lambda)\to \sfD(S) \]
is an equivalence of categories.
\end{theorem}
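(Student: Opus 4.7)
The plan is to proceed via derived Morita/tilting theory: exhibit $J$ as a compact generator of $\KInj(\Lambda)$ whose dg endomorphism algebra is quasi-isomorphic to $S$, and then invoke a general criterion to conclude that $\Hom_\Lambda(J,-)$ is an equivalence.

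First I would verify that $J$ represents a graded-injective resolution of the trivial module $k$ in $\KInj(\Lambda)$. The underlying graded $\Lambda$-module $J^\natural = \Hom_k(\Lambda,k)\otimes_k S$ is a direct sum of shifts of the indecomposable injective $\Hom_k(\Lambda,k)$ (the injective envelope of $k$, since $\Lambda$ is finite-dimensional and self-injective), so $J$ is graded-injective. The inclusion $k\hookrightarrow J$, $1\mapsto e_\emptyset\otimes 1$, where $e_\emptyset$ is the socle generator of $\Hom_k(\Lambda,k)$, is a quasi-isomorphism: the resulting complex is a version of the Koszul resolution of $k$ over $\Lambda$ augmented with $S$-indexing, and acyclicity is a direct Koszul computation. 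Because $\Lambda$ is graded local with unique simple module $k$, standard results (cf.\ \cite{Krause:2005a}) then show that $J$ is a compact generator of $\KInj(\Lambda)$.

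The central computation is that the dg $S$-algebra $\End_\Lambda(J)$ is quasi-isomorphic to $S$ with zero differential. Since $J$ is graded-injective, $\Hom_\Lambda(J,J)$ represents the derived hom and is quasi-isomorphic to $\Hom_\Lambda(k,J)$ via the quasi-isomorphism $k\hookrightarrow J$. The latter is the subcomplex of $J$ of elements annihilated by every $\xi_i$; this is precisely $k\cdot e_\emptyset\otimes_k S$. The differential on $J$ restricted here is $d(e_\emptyset\otimes s)=\sum_i (\xi_i\cdot e_\emptyset)\otimes x_i s$, which vanishes because $\xi_i\cdot e_\emptyset=0$ for every $i$. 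Thus $\Hom_\Lambda(k,J)\cong S$ as dg $S$-modules with trivial differential, and the canonical map $S\to \End_\Lambda(J)$ induced by the $S$-action built into $J$ is a quasi-isomorphism of dg algebras.

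With compactness, generation, and the identification of the endomorphism algebra in hand, the theorem follows from a standard tilting criterion: a coproduct-preserving triangulated functor between compactly generated triangulated categories that sends a compact generator to a compact generator and induces an isomorphism on graded endomorphism rings is an equivalence. The functor $\Hom_\Lambda(J,-)$ preserves coproducts (as $J$ is compact), preserves triangles, sends $J$ to $S$ under the identification above, and induces the identity on graded endomorphism rings. The main obstacle I expect is the formality assertion at the dg level, namely that the map $S\to\End_\Lambda(J)$ is a quasi-isomorphism of dg \emph{algebras} (not merely of graded modules); this is where the explicit $\Lambda$-$S$-bimodule structure of $J$ and the precise form of its differential are used essentially. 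Once this dg-algebra identification is firmly established, the rest is bookkeeping.
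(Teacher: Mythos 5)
This survey states Theorem~\ref{th:BGG} without proof, referring to the sources it cites (the stratification paper \cite{Benson/Iyengar/Krause:bik3} and \cite{Avramov/Buchweitz/Iyengar/Miller:2010a}), and the argument given there is essentially the one you propose: $J$ is a homotopically injective resolution of $k$, hence a compact generator of $\KInj(\Lambda)$; the map $S\to\End_\Lambda(J)$ coming from the $S$-action on $J$ is a quasi-isomorphism of dg algebras, computed exactly by restricting along $k\to J$ to the socle subcomplex $S\cong\Hom_\Lambda(k,J)$ with zero differential; and then tilting theory (Keller \cite{Keller:1994a}, for which your coproduct-preserving d\'evissage criterion is an adequate substitute) gives the equivalence realised by $\Hom_\Lambda(J,-)$. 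So the route is the intended one, and your identification of the dg-algebra quasi-isomorphism as the crux is accurate.

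The one step whose justification does not hold as written is ``since $J$ is graded-injective, $\Hom_\Lambda(J,J)$ represents the derived hom and is quasi-isomorphic to $\Hom_\Lambda(k,J)$.'' Graded-injectivity of $J^\natural$ is not enough: for unbounded dg modules a termwise injective object need not be homotopically injective (K-injective), and only K-injectivity guarantees that $\Hom_\Lambda(-,J)$ takes the quasi-isomorphism $k\to J$ to a quasi-isomorphism. Moreover, because $\Lambda$ has generators in negative degree, the usual ``bounded-below complex of injectives is K-injective'' argument does not apply verbatim (the truncations of $J$ by total degree are not dg $\Lambda$-submodules). The statement is true for this particular $J$, but it needs an argument: for instance, using $J^\natural\cong\Hom_k(\Lambda,S)$ one gets $\Hom_\Lambda(M,J)\cong\Hom_k(M,S)$ with a twisted differential, and filtering by the polynomial degree in $S$ (a complete, exhaustive filtration in each total degree, with associated graded $\Hom_k(M,S_p)$ carrying only the differential of $M$) shows this complex is acyclic whenever $M$ is; alternatively one can quote the K-injectivity of $J$ established in \cite{Avramov/Buchweitz/Iyengar/Miller:2010a}. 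The same point is needed to know that $J$ is the compact object $ik$ of $\KInj(\Lambda)$, so it should be stated and proved rather than folded into ``graded-injective.'' With that repair, your sketch matches the actual proof.
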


We give $\Lambda$ a comultiplication
\[ \Delta(\xi_i)=\xi_i\otimes 1+1\otimes \xi_i. \]
This allows us to define a tensor product of dg $\Lambda$-modules,
and this makes $\KInj(\Lambda)$ into a tensor triangulated category.
Its tensor identity is $ik$, which is a compact generator.
We have
\[ S \cong \Ext^*_\Lambda(k,k) \cong \Hom_{\KInj(\Lambda)}(J,J) \]
which makes $\KInj(\Lambda)$ into an $S$-linear tensor triangulated
category with canonical $S$-action. The equivalence described in
Theorem~\ref{th:BGG} allows us to stratify $\KInj(\Lambda)$.

\begin{theorem}\label{th:strat-Lambda}
The category $\KInj(\Lambda)$ is stratified by
the canonical action of $S$. So the maps $\sigma$ and $\tau$
of Section~\ref{se:loc-glob} give a bijection between 
localising subcategories of $\KInj(\Lambda)$ and subsets of
$\Spec S$.
\end{theorem}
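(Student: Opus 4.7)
The plan is to transport the stratification of $\sfD(S)$ along the Bernstein--Gelfand--Gelfand equivalence
\[
F=\Hom_\Lambda(J,-)\colon \KInj(\Lambda)\xrightarrow{\ \sim\ }\sfD(S)
\]
provided by Theorem~\ref{th:BGG}. The crucial input from outside is the previous theorem, which tells us that $\sfD(S)$ is stratified by its canonical $S$-action, so each $\Gamma_\fp(\sfD(S))$ is minimal as a localising subcategory.

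The first step is to verify that $F$ is $S$-linear, where $\KInj(\Lambda)$ carries the canonical $S$-action coming from the isomorphism $S\cong\End^*_{\KInj(\Lambda)}(ik)\cong\End^*_{\KInj(\Lambda)}(J)$, and $\sfD(S)$ carries the canonical action given by its tensor identity $S$. The functor $F$ sends $J$ to (a shift of) $S$ and, by construction, identifies $\End^*(J)$ with $\End^*_{\sfD(S)}(S)=S$; a standard unwinding of definitions then shows that for any $X$ and any $r\in S$, the action of $r$ on $F(X)$ agrees with $F$ applied to the action of $r$ on $X$.

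Once $S$-linearity is secured, the localisation functors $L_\mcV$ and local cohomology functors $\Gamma_\mcV$ attached to a specialisation-closed $\mcV\subseteq\Spec S$ are determined by the $S$-linear triangulated structure alone, and so commute with $F$ up to natural isomorphism; the same holds for $\Gamma_\fp$ for each $\fp\in\Spec S$. Consequently $F$ restricts to an equivalence $\Gamma_\fp(\KInj(\Lambda))\simeq \Gamma_\fp(\sfD(S))$, and the supports of any object match on the two sides.

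To conclude, let $\sfS$ be a non-zero tensor ideal localising subcategory of $\Gamma_\fp(\KInj(\Lambda))$; it is in particular a non-zero localising subcategory, so under $F$ it corresponds to a non-zero localising subcategory of $\Gamma_\fp(\sfD(S))$. By the minimality provided by stratification of $\sfD(S)$, this subcategory equals $\Gamma_\fp(\sfD(S))$, whence $\sfS=\Gamma_\fp(\KInj(\Lambda))$. This is minimality of $\Gamma_\fp(\KInj(\Lambda))$ as a tensor ideal localising subcategory; combined with the tensor ideal local-global principle of Theorem~\ref{th:BIK2:7.2}, it gives the desired stratification, and the claimed bijection with subsets of $\Spec S$ then follows from the tensor ideal version of Theorem~\ref{th:BIK2:3.6}. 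The only real obstacle is the bookkeeping that confirms $F$ intertwines the two canonical $S$-actions; once that is in place, everything else is a formal consequence of stratification downstairs.
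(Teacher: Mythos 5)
Your proposal is correct and follows essentially the same route as the paper, which obtains the stratification of $\KInj(\Lambda)$ precisely by transporting it along the BGG equivalence of Theorem~\ref{th:BGG}, using the identification $S\cong\Ext^*_\Lambda(k,k)\cong\Hom_{\KInj(\Lambda)}(J,J)$ to make the equivalence $S$-linear so that the functors $\Gamma_\fp$ match on both sides. Your extra remark that minimality as a tensor ideal localising subcategory follows from plain minimality downstairs is consistent with the paper's setup, since the unit $ik$ is a compact generator of $\KInj(\Lambda)$ and hence every localising subcategory is automatically tensor ideal.
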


\section{The Koszul construction}

If $E$ is an elementary abelian $2$-group and $k$ has characteristic
two, then $kE$ is an exterior algebra on generators of degree zero. So
we can use Theorem~\ref{th:strat-Lambda} to stratify $\KInj(kE)$.
This proves Theorem~\ref{th:strat} in this case; see
\cite{Benson/Iyengar/Krause:bik6} for details. Elementary abelian
$p$-groups with $p$ odd cannot be treated this way. In this section,
we show how to use a Koszul construction to deal with this case.

So let $p$ be a prime, $k$ be a field of characteristic $p$, and 
\[ E=\langle g_1,\dots,g_r\rangle\cong (\bZ/p)^r \]
be an elementary abelian $p$-group. Let $z_i=g_i-1\in kE$, so that
\[ kE=k[z_1,\dots,z_r]/(z_1^p,\dots,z_r^p). \]

We regard $kE$ as a complete intersection, and form the Koszul construction
as follows. Let $A$ be the dg algebra
\[ A=kE\langle y_1,\dots,y_r\rangle \]
where $kE$ is in degree zero, and
$y_1,\dots,y_r$ are exterior generators of degree $-1$ with
\[ y_i^2=0,\qquad y_iy_j=-y_jy_i,\qquad d(y_i)=z_i,\qquad d(z_i)=0. \]

Let $\Lambda$ be an exterior algebra on generators $\xi_1,\dots,\xi_r$
of degree $-1$, regarded as a dg algebra with zero differential,
and $S$ be a polynomial algebra on generators $x_1,\dots,x_r$ of
degree $2$, again regarded as dg algebra with zero differential.
The following is Lemma~7.1 of \cite{Benson/Iyengar/Krause:bik3}.

\begin{lemma}
The map $\phi\colon\Lambda\to A$ defined by 
\[ \phi(\xi_i)=z_i^{p-1}y_i \]
is a quasi-isomorphism of dg algebras. In particular,
\[ \Ext^*_A(k,k)\cong \Ext^*_\Lambda(k,k)\cong S. \]
\end{lemma}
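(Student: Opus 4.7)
The plan is to verify that $\phi$ is a map of dg algebras, compute $H^*(A)$ directly via a tensor decomposition to see that it coincides with $\Lambda$, observe that $\phi$ induces exactly this identification on homology, and then deduce the $\Ext$ statements from a derived equivalence plus classical Koszul duality.

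First I would check that $\phi$ is well defined. Each element $z_i^{p-1}y_i$ has degree $-1$, matching $|\xi_i|$. Since $d(y_i)=z_i$, $d(z_i)=0$, and $d$ is a (graded) derivation, one computes
\[ d(z_i^{p-1}y_i)=z_i^{p-1}d(y_i)=z_i^{p}=0, \]
so each $\phi(\xi_i)$ is a cycle. The relations $(z_i^{p-1}y_i)^2=0$ and $(z_i^{p-1}y_i)(z_j^{p-1}y_j)=-(z_j^{p-1}y_j)(z_i^{p-1}y_i)$ follow immediately from $y_i^2=0$, $y_iy_j=-y_jy_i$, and the fact that the degree zero elements $z_i$ are central. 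So $\phi$ extends to a morphism of dg algebras.

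Next I would compute $H^*(A)$ using the decomposition $A\cong A_1\otimes_k\cdots\otimes_k A_r$ of dg algebras, where $A_i=k[z_i]/(z_i^p)\otimes_k k\langle y_i\rangle$ is a two-column complex with $d(z_i^{j}y_i)=z_i^{j+1}$ and $d(z_i^j)=0$. A direct inspection gives $H^0(A_i)=k\cdot 1$ and $H^{-1}(A_i)=k\cdot z_i^{p-1}y_i$, so $H^*(A_i)$ is a two-dimensional exterior algebra on a class in degree $-1$. The K\"unneth theorem (applicable since everything in sight is a complex of $k$-vector spaces) then identifies $H^*(A)$ with the exterior algebra over $k$ on the classes of $z_1^{p-1}y_1,\ldots,z_r^{p-1}y_r$ in degree $-1$, and by construction $H^*(\phi)$ realises exactly this identification. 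Hence $\phi$ is a quasi-isomorphism.

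For the $\Ext$ statements, a quasi-isomorphism of dg algebras induces a triangle equivalence between the derived categories of dg modules, and this equivalence identifies the augmentation module $k$ on either side (since $\phi$ carries the augmentation ideal of $\Lambda$ into that of $A$), so $\Ext^*_A(k,k)\cong\Ext^*_\Lambda(k,k)$. The second isomorphism is the classical Koszul duality for an exterior algebra: a minimal semi-free resolution of $k$ over $\Lambda$ is given by $\Lambda\otimes_k S$ with Koszul differential $\sum\xi_i\otimes x_i$, and applying $\Hom_\Lambda(-,k)$ recovers the polynomial ring $S$ on generators $x_i$ with $|x_i|=-|\xi_i|+1$, which matches the conventions fixed in the text. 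The only delicate point in this plan is the K\"unneth step together with the sign and degree bookkeeping that identifies $A$ as a tensor product of dg algebras; once that decomposition is in place, the remainder is mechanical.
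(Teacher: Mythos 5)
Your main computation is correct, and it is essentially the standard argument: the survey itself gives no proof of this statement (it simply cites Lemma~7.1 of \cite{Benson/Iyengar/Krause:bik3}), and the natural proof recorded there proceeds exactly as you do, by checking that $z_i^{p-1}y_i$ is a cycle of square zero so that $\phi$ is a map of dg algebras, decomposing $A$ as a tensor product of the rank-one Koszul pieces $k[z_i]/(z_i^p)\langle y_i\rangle$, and reading off from K\"unneth that $H^*(A)$ is exterior on the classes $[z_i^{p-1}y_i]$. Your deduction that $\Ext^*_A(k,k)\cong\Ext^*_\Lambda(k,k)$, by restriction along the quasi-isomorphism and the observation that the augmentation module $k$ is preserved, is also fine.

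The last step, however, is wrong as written. The complex $\Lambda\otimes_k S$ with differential given by multiplication by $\sum_i\xi_i\otimes x_i$ is \emph{not} a semifree resolution of $k$: its homology is the one-dimensional space spanned by $\xi_1\cdots\xi_r\otimes 1$ (a shift of $k$), and, more seriously, it is not homotopically projective over $\Lambda$, because the differential raises the polynomial degree, so no exhaustive semifree filtration exists. Concretely, for $r=1$ one checks that the induced differential on $\Hom_\Lambda(\Lambda\otimes_k S,\,k)\cong\Hom_k(S,k)$ is zero, so its cohomology is the graded dual of $S$, concentrated in degrees $\le 0$; this is not $\Ext^*_\Lambda(k,k)$, which is nonzero in arbitrarily large positive degrees, so the complex cannot compute $\Ext$. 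The correct Koszul-type resolution is $\Lambda\otimes_k\Hom_k(S,k)$ (the divided power coalgebra side), with the differential acting through the dual variables so that it \emph{lowers} polynomial degree; this is semifree via the filtration by polynomial degree, it is precisely the $k$-linear dual of the object $J=\Hom_k(\Lambda,k)\otimes_k S$ used in the BGG section, and applying $\Hom_\Lambda(-,k)$ gives $S$ as a graded vector space by minimality. Even then the ring structure requires a word in characteristic $p$: $\operatorname{Tor}^\Lambda(k,k)$ is a divided power algebra, and it is its graded dual $\Ext^*_\Lambda(k,k)$ that is polynomial, so one should either dualise the Hopf structure or simply invoke the classical computation of the cohomology of an exterior algebra (equivalently, the identification $S\cong\Hom_{\KInj(\Lambda)}(J,J)$ recorded in the BGG section) rather than the resolution you wrote down.
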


We give $A$ a comultiplication
\[ \Delta(z_i)=z_i\otimes 1 + 1 \otimes z_i,\qquad 
\Delta(y_i)=y_i\otimes 1 + 1 \otimes y_i. \]
This gives $\KInj(A)$ the structure of a tensor triangulated category
with a canonical action of $S$. The following is a special case of 
Proposition~4.6 of \cite{Benson/Iyengar/Krause:bik3}.

\begin{theorem}
The map $\phi\colon\Lambda\to A$ induces an equivalence of 
tensor triangulated categories
\[ \Hom_\Lambda(A,-)\colon \KInj(\Lambda)\to\KInj(A). \]
\end{theorem}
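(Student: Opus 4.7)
The plan is to establish the equivalence in two stages: first produce a triangulated equivalence between $\KInj(\Lambda)$ and $\KInj(A)$ using that $\phi$ is a quasi-isomorphism, and then upgrade this to a tensor equivalence by checking compatibility with the comultiplications on $\Lambda$ and $A$.

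For the triangulated equivalence, I would work through the unbounded derived categories. Restriction of scalars along $\phi$ gives a functor $\res_\phi\colon\sfD(A)\to\sfD(\Lambda)$, and since $\phi$ is a quasi-isomorphism this restriction is an equivalence of triangulated categories, with the right adjoint (the derived functor of $\Hom_\Lambda(A,-)$) as quasi-inverse. To pass to the homotopy categories of graded-injectives, I would invoke standard results identifying $\KInj$ with $\sfD$ for dg algebras that are bounded with finite-dimensional components over a field; both $\Lambda$ and $A$ meet this requirement, living in degrees $-r$ through $0$. The remaining point is to check that on graded-injective inputs the naive functor $\Hom_\Lambda(A,-)$ already represents the derived one. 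For this, one shows that $\Hom_\Lambda(A,-)$ sends graded-injective dg $\Lambda$-modules to graded-injective dg $A$-modules, which reduces to the statement that $A^\natural$ is projective (in fact, free) as a graded $\Lambda^\natural$-module via $\phi$; this can be verified directly from the explicit monomial description of $A$.

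For the tensor compatibility, the required natural transformation is
\[
  \Hom_\Lambda(A,M)\otimes_k \Hom_\Lambda(A,N)\longrightarrow \Hom_\Lambda(A,M\otimes_k N),
\]
to be shown a quasi-isomorphism in $\KInj(A)$, coherently in $M$ and $N$. This is the step I expect to be the main obstacle, because $\phi$ is not a strict map of dg coalgebras. A direct calculation gives $\Delta_A(\phi(\xi_i))=(z_i\otimes 1+1\otimes z_i)^{p-1}(y_i\otimes 1+1\otimes y_i)$, which differs from $(\phi\otimes\phi)(\Delta_\Lambda(\xi_i))=z_i^{p-1}y_i\otimes 1+1\otimes z_i^{p-1}y_i$ by cross terms that are boundaries but are not themselves zero on the nose. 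Consequently the natural isomorphism has to be built via coherent homotopies rather than from a single strict formula. I would handle this by appealing to the general equivalence criterion of Proposition~4.6 in \cite{Benson/Iyengar/Krause:bik3}, which is designed to handle exactly this situation of a quasi-isomorphism between dg Hopf algebras; alternatively, one could strictify by replacing $A$ with a cofibrant resolution admitting a strict Hopf map, or transport the coalgebra structure along $\phi$ up to $A_\infty$-coherence.
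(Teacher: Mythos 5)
The paper itself offers no argument for this statement beyond the remark that it is a special case of Proposition~4.6 of \cite{Benson/Iyengar/Krause:bik3}, so what must be judged is the argument you supply, and it breaks at its first genuine step. It is not true that $\KInj$ coincides with the derived category for bounded dg algebras with degreewise finite-dimensional components: for $\Lambda$ (and likewise for $A$) the canonical functor $\KInj(\Lambda)\to\sfD(\Lambda)$ has a nonzero kernel. Already for $r=1$, so $\Lambda=k[\xi]/(\xi^2)$ with $|\xi|=-1$, the dg $\Lambda$-module with underlying graded module $\bigoplus_{n\in\bZ}\Lambda e_n$, $|e_n|=2n$, and $d(e_n)=\xi e_{n+1}$ is acyclic and graded-free, hence graded-injective; but any $\Lambda^\natural$-linear map $h$ of degree $-1$ satisfies $h(e_n)\in k\,\xi e_n$ and $h(\xi e_n)=0$, whence $dh+hd=0$, so this object is not contractible. (Equivalently: if your identification held, Theorem~\ref{th:BGG} would force $\sfD(\Lambda)\simeq\sfD(S)$, which is false, since every compact object of $\sfD(\Lambda)$ --- a perfect dg module over a finite-dimensional algebra --- has finite-dimensional total endomorphism ring, while the compact object $S$ of $\sfD(S)$ does not.) Homotopy categories of graded-injectives, unlike derived categories, are not invariant under quasi-isomorphism by general nonsense; establishing that invariance for this particular $\phi$ is precisely the nontrivial content of the cited Proposition~4.6, so the step you dispose of by the identification $\KInj\simeq\sfD$ is the whole theorem. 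Since your treatment of the monoidal structure also ends by appealing to that same proposition, the proposal is ultimately circular relative to the paper, and the one new reduction it offers is the false one above.

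Two further points. Your stated reduction for preservation of graded-injectivity is both unnecessary and false: coinduction $\Hom_\Lambda(A,-)$ sends graded-injectives to graded-injectives automatically, being right adjoint to the exact restriction functor along $\phi$; and $A^\natural$ is not projective (equivalently, not free, as $\Lambda^\natural$ is local) over $\Lambda^\natural$ --- for $r=1$ a free module would have rank $p$ and multiplication by $\phi(\xi)=z^{p-1}y$ would then have $p$-dimensional image, whereas on $A^\natural$ one computes $\phi(\xi)\cdot z^a=0$ for $a\ge 1$ and $\phi(\xi)\cdot z^ay=0$, so the image is the one-dimensional span of $z^{p-1}y$. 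On the other hand, your observation that $\phi$ is not a morphism of coalgebras, so the monoidal comparison cannot be a strict formula, is correct and is a genuine subtlety. A repair along your lines would have to prove directly that coinduction is an equivalence on homotopy categories of injectives --- for instance by checking it preserves coproducts and is fully faithful on compact objects, where the quasi-isomorphism does give an equivalence of the bounded derived categories of finite-dimensional dg modules, and that these compacts generate --- and then handle the tensor structure, e.g.\ using that the units generate both categories, so all localising subcategories are tensor ideal, together with Lemma~\ref{le:two-actions}; this is in effect what the cited proposition does.
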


As a consequence, we can stratify $\KInj(A)$. The following is
Theorem~7.2 of \cite{Benson/Iyengar/Krause:bik3}.

\begin{theorem}\label{th:strat-A}
The $S$-linear tensor triangulated category $\KInj(A)$ is stratified 
by the canonical action of $S$. So the maps $\sigma$ and $\tau$ of
Section~\ref{se:loc-glob} give a bijection between localising subcategories
of $\KInj(A)$ and subsets of $\Spec S$.
\end{theorem}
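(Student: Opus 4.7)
The plan is to transport the stratification of $\KInj(\Lambda)$ established in Theorem~\ref{th:strat-Lambda} across the equivalence
\[ F \;=\; \Hom_\Lambda(A,-)\colon \KInj(\Lambda) \longrightarrow \KInj(A) \]
of tensor triangulated categories supplied by the previous theorem. Essentially, once one checks that $F$ is $S$-linear in the appropriate sense, stratification is a property that transports across equivalences of $R$-linear tensor triangulated categories, so the result will follow formally.

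The first step is to verify that $F$ intertwines the two canonical $S$-actions. Both actions arise from homomorphisms into the graded endomorphism ring of the tensor identity: on one side $S \cong \Ext^*_\Lambda(k,k) \cong \End^*_{\KInj(\Lambda)}(\one)$, and on the other $S \cong \Ext^*_A(k,k) \cong \End^*_{\KInj(A)}(\one)$. These two identifications of $\Ext^*$ with $S$ are compatible through the quasi-isomorphism $\phi\colon \Lambda\to A$, by the preceding lemma. Since $F$ is symmetric monoidal it carries the tensor identity to the tensor identity and induces a ring isomorphism between their graded endomorphism rings; this isomorphism is compatible with the two $S$-actions, so $F$ is $S$-linear.

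Next, an $R$-linear equivalence between $R$-linear tensor triangulated categories preserves all the structure used to define stratification: it bijects tensor ideal localising subcategories, and it intertwines the Bousfield localisation functors $\Gamma_\mcV$ and $L_\mcV$ attached to a specialisation closed subset $\mcV\subseteq \Spec R$, since these functors are characterised purely in terms of the $R$-action on morphism groups. Consequently $F$ intertwines the local cohomology functors $\Gamma_\fp$, preserves supports (i.e. $\supp_S(FX) = \supp_S(X)$), and restricts to an equivalence $\Gamma_\fp(\KInj(\Lambda)) \to \Gamma_\fp(\KInj(A))$ for each $\fp\in\Spec S$. The minimality criterion of Lemma~\ref{le:test}(ii) is expressed purely in terms of non-vanishing of $\Hom^*_\sfT(X\otimes Z,Y)$, so minimality of tensor ideal localising subcategories is also transported by $F$.

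Since each $\Gamma_\fp(\KInj(\Lambda))$ is minimal as a tensor ideal localising subcategory by Theorem~\ref{th:strat-Lambda}, the same holds for each $\Gamma_\fp(\KInj(A))$. Combined with the fact that the tensor ideal local-global principle for $\KInj(A)$ is automatic by Theorem~\ref{th:BIK2:7.2}, this proves that $\KInj(A)$ is stratified by the canonical $S$-action, and the asserted bijection with subsets of $\Spec S$ then follows from the general setup of Section~\ref{se:loc-glob}. The only substantive point in the argument is checking carefully that $F$ really is $S$-linear; beyond that, no new geometric or algebraic input is needed.
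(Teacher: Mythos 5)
Your proposal matches the paper's intended argument: the survey presents the equivalence $\Hom_\Lambda(A,-)\colon\KInj(\Lambda)\to\KInj(A)$ and then says explicitly that "as a consequence, we can stratify $\KInj(A)$," citing Theorem~7.2 of the underlying paper. Your careful verification that the equivalence is $S$-linear (via the quasi-isomorphism $\phi$ and the compatible identifications of $\Ext^*_\Lambda(k,k)$ and $\Ext^*_A(k,k)$ with $S$), and that $S$-linear tensor triangulated equivalences transport $\Gamma_\fp$ functors and minimality of tensor ideal localising subcategories, is exactly the formal content behind that "as a consequence."
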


Now the inclusion map $kE\to A$ induces a restriction map
\[ \Ext^*_A(k,k)\to \Ext^*_{kE}(k,k). \]
The structure of $\Ext^*_{kE}(k,k)$ is as follows. 
If $p$ is odd then it is a tensor product of an exterior algebra and
a polynomial algebra
\[ \Ext^*_{kE}(k,k) \cong \Lambda(u_1,\dots,u_r)\otimes 
k[x_1,\dots,x_r] \]
with $|u_i|=1$ and $|x_i|=2$. The elements $x_i$ are the restrictions
of the elements of the same name in $\Ext^*_A(k,k)\cong S$.
If $p=2$ then
\[ \Ext^*_{kE}(k,k)=k[u_1,\dots,u_r] \] 
with $|u_i|=1$. The elements $x_i\in\Ext^*_A(k,k)$ restrict to $u_i^2$.

In both cases, this allows us to regard $S\cong\Ext^*_A(k,k)$ 
as a subring of $H^*(E,k)$
over which it is finitely generated as a module. 
The restriction map from $\Ext^*_A(k,k)$ to
$H^*(E,k)=\Ext^*_{kE}(k,k)$ induces a bijection
\[ \Spec H^*(E,k) \to \Spec S. \]

Using the induction and restriction functors,
the criterion of Lemma~\ref{le:test} 
is essential in deducing
the following theorem from Theorem~\ref{th:strat-A}.
See Theorem~4.11 of \cite{Benson/Iyengar/Krause:bik3} for details.

\begin{theorem}\label{th:strat-kE}
The tensor triangulated category $\KInj(kE)$ is stratified
by the canonical action of $H^*(E,k)$, or equivalently
of $S\subseteq H^*(E,k)$. So the maps $\sigma$ and $\tau$ of
Section~\ref{se:loc-glob} give a bijection between localising
subcategories of $\KInj(kE)$ and subsets of $\Spec H^*(E,k)$.
\end{theorem}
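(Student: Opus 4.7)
The plan is to deduce Theorem~\ref{th:strat-kE} from the stratification of $\KInj(A)$ in Theorem~\ref{th:strat-A} by transferring minimality along the dg algebra inclusion $\iota\colon kE\hookrightarrow A$, using induction and restriction together with the criterion of Lemma~\ref{le:test}(ii). As a first reduction, since $H^*(E,k)$ is finite as a module over $S\cong\Ext^*_A(k,k)$, the restriction map $S\to H^*(E,k)$ induces a bijection $\Spec H^*(E,k)\to\Spec S$, and the functors $\Gamma_\fp$ computed from either action correspond under it; so it suffices to stratify $\KInj(kE)$ as an $S$-linear category. Because $S$ has finite Krull dimension, the local-global principle (Theorem~\ref{th:BIK2:7.2}) applies, and by Theorem~\ref{th:BIK2:3.6} stratification reduces to showing, for each $\fp\in\Spec S$, that $\Gamma_\fp\KInj(kE)$ is a minimal tensor ideal localising subcategory.

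I would next set up an adjunction between $\KInj(A)$ and $\KInj(kE)$ using $\iota$. Since $A$ is finite free as a $kE$-module and $\iota$ is a map of cocommutative dg Hopf algebras, the restriction functor $\res\colon\KInj(A)\to\KInj(kE)$ is tensor exact and $S$-linear, and it admits a left adjoint $\operatorname{ind}$ that preserves compactness (because $\res$ preserves coproducts). The composite $\res\circ\operatorname{ind}$ is naturally isomorphic to $A\otimes_{kE}(-)$, and any $kE$-basis of $A$ containing $1$ exhibits the identity functor as a direct summand of $\res\circ\operatorname{ind}$; hence $\operatorname{ind}$ is faithful. Both $\res$ and $\operatorname{ind}$ are $S$-linear and therefore intertwine the $\Gamma_\fp$'s; in particular, $\operatorname{ind}$ sends non-zero objects of $\Gamma_\fp\KInj(kE)$ to non-zero objects of $\Gamma_\fp\KInj(A)$.

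To verify minimality, fix non-zero $X,Y\in\Gamma_\fp\KInj(kE)$. Then $\operatorname{ind}(X)$ and $\operatorname{ind}(Y)$ are non-zero objects of $\Gamma_\fp\KInj(A)$, which by Theorem~\ref{th:strat-A} is a minimal tensor ideal localising subcategory; so Lemma~\ref{le:test}(ii) produces a compact $\widetilde Z\in\KInj(A)$ with
\[
\Hom^*_{\KInj(A)}\bigl(\operatorname{ind}(X)\otimes_k\widetilde Z,\operatorname{ind}(Y)\bigr)\ne 0.
\]
Unwinding this via the adjunction $\operatorname{ind}\dashv\res$, the strong monoidality of $\res$, and the identification of $\res\operatorname{ind}(X)$ as a direct sum of shifted copies of $X$, the non-vanishing propagates to $\Hom^*_{\KInj(kE)}(X\otimes_k Z,Y)\ne 0$ for a compact object $Z\in\KInj(kE)$ assembled from $\res(\widetilde Z)$. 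A second application of Lemma~\ref{le:test}(ii), in $\KInj(kE)$ this time, delivers minimality of $\Gamma_\fp\KInj(kE)$.

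The main obstacle will be the last transfer step: exhibiting $Z\in\KInj(kE)$ together with the non-zero morphism $X\otimes_k Z\to Y$ requires combining the adjunction $\operatorname{ind}\dashv\res$ with a projection formula for $\iota$ and the Hopf-algebra structure controlling the $\otimes_k$ product, so that non-vanishing in $\KInj(A)$ really does descend to non-vanishing in $\KInj(kE)$. A subsidiary point, easier but essential, is $S$-linearity of $\operatorname{ind}$; this follows formally from $S$-linearity of $\res$, which in turn records the fact that the generators $x_i\in S$ restrict to the classes of the same name in $\Ext^*_{kE}(k,k)$ (or to $u_i^2$ in characteristic two) under the map $\Ext^*_A(k,k)\to\Ext^*_{kE}(k,k)$.
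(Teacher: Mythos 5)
Your reduction to Theorem~\ref{th:strat-A} via induction and restriction along $kE\hookrightarrow A$, combined with Lemma~\ref{le:test}, is exactly the route the paper takes (the paper only gestures at it and defers to Theorem~4.11 of \cite{Benson/Iyengar/Krause:bik3}). However, there is a substantive point you do not address. In order for $\iota\colon kE\to A$ to be a map of cocommutative dg Hopf algebras, as you assume, $kE$ must carry the ``restricted Lie'' coproduct $\Delta(z_i)=z_i\otimes 1+1\otimes z_i$, not the usual group-like coproduct $\Delta(g_i)=g_i\otimes g_i$. These two Hopf structures give two different canonical actions $\phi,\phi'\colon H^*(E,k)\to Z^*(\KInj(kE))$, and they are genuinely different as maps into the graded centre. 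Your argument therefore proves stratification with respect to the restricted Lie action, whereas the theorem is asserted for the canonical action coming from the group structure. The paper explicitly flags this obstacle and resolves it with Lemma~\ref{le:two-actions}: the two maps agree after composing with $Z^*(\sfT)\to\End^*_\sfT(\one)$, and since $\one=ik$ generates $\KInj(kE)$, stratification through one action is equivalent to stratification through the other. Without this (or an equivalent observation) the argument is incomplete.

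A second, smaller remark: you can sidestep the projection-formula machinery in your ``main obstacle'' paragraph. Because $\one$ generates $\KInj(kE)$, every localising subcategory is tensor ideal, so you may take $Z=\one$ in Lemma~\ref{le:test}(ii) and the minimality test collapses to Lemma~\ref{le:test}(i): it suffices to show $\Hom^*_{kE}(X,Y)\ne 0$. That follows directly from the adjunction isomorphism
\[
\Hom^*_{A}(\operatorname{ind}X,\operatorname{ind}Y)\cong\Hom^*_{kE}(X,\res\operatorname{ind}Y)
\]
together with the fact that $\res\operatorname{ind}Y=A\otimes_{kE}Y$ is a finite direct sum of shifts of $Y$ (as $A$ is finite free over $kE$), so nonvanishing on the left forces $\Hom^*_{kE}(X,Y)\ne 0$. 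This avoids the Hopf-theoretic projection formula entirely. The $S$-linearity of $\res$ and $\operatorname{ind}$, needed so that both functors carry $\Gamma_\fp$-objects to $\Gamma_\fp$-objects, still has to be verified as you indicate.
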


There is an issue here with the tensor structure. The usual diagonal
map $\Delta(g_i)=g_i\otimes g_i$ 
on $kE$ is not compatible with the inclusion $kE\to A$.
There is another diagonal map on $kE$ given by 
$\Delta(z_i)=z_i\otimes 1 + 1\otimes z_i$ coming from regarding
$kE$ as a restricted universal enveloping algebra, and this 
diagonal map is compatible with the inclusion.
Each diagonal map gives rise to a canonical action of 
$H^*(E,k)$ on $\KInj(kE)$, but the two actions are not the same.
Lemma~3.10 of \cite{Benson/Iyengar/Krause:bik3} helps us
out at this point:

\begin{lemma}\label{le:two-actions}
Let $\sfT$ be a triangulated category admitting two tensor triangulated
structures with the same unit $\one$, and assume that $\one$ generates $\sfT$.
Let
\[ \phi,\phi'\colon R\to Z^*(\sfT) \] 
be two actions of $R$ on $\sfT$.
If the maps $R\to\End^*_\sfT(\one)$ induced by $\phi$ and $\phi'$ agree
then $\sfT$ is stratified by $R$ through $\phi$ if and only if it is
stratified by $R$ through $\phi'$.
\end{lemma}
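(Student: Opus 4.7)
The plan is to show that, although the two tensor products on $\sfT$ may differ, the hypothesis forces every piece of data entering the definition of stratification to coincide for $\phi$ and $\phi'$, so the two stratification conditions become literally identical. Three observations do the job.

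First, since $\one$ generates $\sfT$, the remark closing Section~\ref{se:loc-glob} applies under either tensor structure: for any localising subcategory $\sfS\subseteq\sfT$, the full subcategory $\{Y\in\sfT\mid \sfS\otimes Y\subseteq\sfS\}$ is localising and contains $\one$, hence equals $\sfT$, so $\sfS$ is automatically tensor ideal. Thus ``tensor ideal localising'' and ``localising'' pick out the same collection of subcategories regardless of which $\otimes$ is used, and minimality in either sense is intrinsic to the triangulated structure alone.

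Second, I would show that for every specialisation closed $\mcV\subseteq\Spec R$ the subcategory $\sfT_\mcV$ is the same whether computed through $\phi$ or $\phi'$. Since $\one$ is a compact generator, $\sfT^c$ coincides with the thick closure of $\one$. For any fixed $X\in\sfT$ and $\fp\notin\mcV$, the full subcategory $\{C\in\sfT^c\mid \Hom^*_\sfT(C,X)_\fp=0\}$ is closed under shifts, summands and triangles, because localisation at $\fp$ is exact and preserves the long exact sequence obtained by applying $\Hom^*_\sfT(-,X)$ to a triangle. So as soon as this subcategory contains $\one$ it is all of $\sfT^c$, giving
\[ \sfT_\mcV=\{X\in\sfT\mid \Hom^*_\sfT(\one,X)_\fp=0\text{ for all }\fp\in\Spec R\smallsetminus\mcV\}. \]
By naturality of $\phi(r)$, the $R$-action on $\Hom^*_\sfT(\one,X)$ via $\phi$ is $r\cdot\alpha=(-1)^{|r||\alpha|}\alpha\circ\phi(r)_\one$ for $\alpha\in\Hom^*_\sfT(\one,X)$, and the analogous formula holds for $\phi'$. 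Since $\phi(r)_\one=\phi'(r)_\one$ by hypothesis, the two $R$-module structures on $\Hom^*_\sfT(\one,X)$ coincide, as do their localisations at $\fp$, and hence $\sfT_\mcV$ is the same subcategory of $\sfT$ under both actions.

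Third, a localisation functor is determined up to canonical isomorphism by its kernel; therefore $L_\mcV$, $\Gamma_\mcV$, and hence $\Gamma_\fp=L_\mcW\Gamma_\mcV$, are the same functors for $\phi$ and $\phi'$, and consequently the subcategories $\Gamma_\fp(\sfT)$ and the set $\supp_R(\sfT)$ also coincide. Combined with the first observation, the requirement that each $\Gamma_\fp(\sfT)$ be minimal as a tensor ideal localising subcategory of $\sfT$ is literally the same condition under $\phi$ and under $\phi'$, which proves the lemma. The delicate point is the reduction of the defining condition for $\sfT_\mcV$ from all compact objects to the single object $\one$; this rests on Neeman's theorem that a compact generator generates $\sfT^c$ as a thick subcategory, together with the exactness of graded localisation at $\fp$.
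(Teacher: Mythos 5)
Your argument is correct. The survey itself does not include a proof of this lemma---it cites Lemma~3.10 of \cite{Benson/Iyengar/Krause:bik3}---but the three observations you isolate are exactly what the proof requires: (a) when $\one$ generates, every localising subcategory is tensor ideal under either tensor structure, so minimality is intrinsic to the triangulated structure alone; (b) because $\one$ is a compact generator, $\sfT_\mcV$ is detected by $\Hom^*_\sfT(\one,-)$, and the $R$-module structure on $\Hom^*_\sfT(\one,X)$ is governed by $\phi_\one$, which agrees with $\phi'_\one$ by hypothesis; and (c) localisation functors are determined up to natural isomorphism by their kernels, so the $\Gamma_\fp$, the subcategories $\Gamma_\fp(\sfT)$, and the set $\supp_R(\sfT)$ coincide for the two actions, making the stratification condition literally the same.
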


\section{Quillen stratification}\label{se:Qstrat}

There is a general machine for understanding cohomological properties
of a general finite group from its elementary abelian subgroups,
called Quillen stratification. In this section, we explain how
to use this machine to compare localising subcategories of
$\KInj(kE)$ and $\KInj(kG)$.

Let $\mcV_G=\Spec H^*(G,k)$. If $H$ is a subgroup of $G$ then the
restriction map $H^*(G,k)\to H^*(H,k)$ induces a map 
\[ \res^*_{G,H}\colon\mcV_H\to \mcV_G. \]

Quillen \cite{Quillen:1971b,Quillen:1971c} proved the following. 
Given a prime $\fp\in\mcV_G$ 
we say that $\fp$ \emph{originates} in an elementary abelian 
$p$-subgroup $E\le G$ if $\fp$ is in the image of
$\res^*_{G,E}$ but not of $\res^*_{G,E'}$ for $E'$ a proper subgroup
of $E$. 

\begin{theorem}\label{th:Quillen}
Given a prime $\fp\in\mcV_G$, there exists a pair $(E,\fq)$ such
that $\fp$ originates in $E$ and $\res^*_{G,E}(\fq)=\fp$, and all
such pairs are $G$-conjugate. This sets up a bijection between primes
$\fp\in\mcV_G$ and conjugacy classes of pairs $(E,\fq)$ where
$\fq\in\mcV_E$ originates in $E$.
\end{theorem}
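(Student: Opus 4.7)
The plan is to build the argument on Quillen's original $F$-isomorphism theorem, which says that the product of restriction maps to cohomology of elementary abelian $p$-subgroups is an $F$-isomorphism; at the level of spectra this yields a surjection $\coprod_{E}\mcV_E\to\mcV_G$. Combined with the Mackey double coset formula, this will produce both halves of the statement.

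For existence, given $\fp\in\mcV_G$, Quillen's theorem produces at least one elementary abelian $E$ with $\fp\in\operatorname{Im}(\res^*_{G,E})$. I would choose such an $E$ of minimal order and lift $\fp$ to some $\fq\in\mcV_E$. If $\fp$ were also in the image of $\res^*_{G,E'}$ for a proper subgroup $E'<E$, then $E'$ would itself be an elementary abelian subgroup of strictly smaller order admitting a lift of $\fp$, contradicting the minimal choice of $E$; hence $\fp$ originates in $E$.

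For uniqueness, suppose $(E_1,\fq_1)$ and $(E_2,\fq_2)$ are two such pairs with $\res^*_{G,E_i}(\fq_i)=\fp$ and $\fp$ originating in each $E_i$. The key ingredient is the Mackey-type inclusion
\[
\operatorname{Im}(\res^*_{G,E_1})\cap\operatorname{Im}(\res^*_{G,E_2})\;\subseteq\;\bigcup_{g\in E_2\backslash G/E_1}\operatorname{Im}(\res^*_{G,E_2\cap{}^gE_1}),
\]
which follows from the Mackey decomposition of $\res^*_{G,E_2}\circ\operatorname{tr}^*_{E_1,G}$. Since $\fp$ originates in $E_2$, it does not lie in $\operatorname{Im}(\res^*_{G,F})$ for any proper subgroup $F<E_2$; so some coset representative $g$ must give $E_2\cap{}^gE_1=E_2$, i.e.\ $E_2\le{}^gE_1$. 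By the symmetric argument $E_1\le{}^hE_2$ for some $h$, so $|E_1|=|E_2|$ and the two subgroups are $G$-conjugate. Tracking $\fq_2$ through this conjugation identifies it with a $G$-conjugate of $\fq_1$, completing the bijection.

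The main obstacle is establishing the Mackey inclusion above rigorously: one must argue that a prime $\fp$ simultaneously in the images of $\res^*_{G,E_1}$ and $\res^*_{G,E_2}$ necessarily factors through the restriction from some intersection $E_2\cap{}^gE_1$. This relies on a careful analysis of how the transfer map interacts with prime ideals in $H^*(E_2,k)/\sqrt{0}$, together with the explicit polynomial structure of this ring for elementary abelian $E_2$, and is the combinatorial core of Quillen's argument in \cite{Quillen:1971b,Quillen:1971c}.
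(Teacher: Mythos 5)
The paper states this as Quillen's theorem with a reference to \cite{Quillen:1971b,Quillen:1971c} and gives no proof, so there is no argument in the paper to compare against. Your outline follows the spirit of Quillen's approach --- the $F$-isomorphism theorem for surjectivity, a Mackey double-coset analysis for conjugacy --- and the existence half via minimality of $|E|$ is sound. The Mackey inclusion you invoke is also correct; it amounts to the identity $\operatorname{Im}(\res^*_{G,H})=V_G(k(G/H))$ combined with the decomposition of $k(G/E_1)\otimes_k k(G/E_2)$ into permutation modules.

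The uniqueness step, however, has a genuine gap. Your Mackey argument and its symmetric counterpart show only that the \emph{subgroups} $E_1$ and $E_2$ are $G$-conjugate; they say nothing about whether the primes $\fq_1$ and $\fq_2$ correspond under that conjugation. After conjugating so that $E_2={}^gE_1$, one is left with two primes in $\mcV_{E_1}$ --- namely $\fq_1$ and the transport of $\fq_2$ --- both lying in the ``originating'' locus $V^+_{E_1}$ and both restricting to $\fp$. Your final sentence, ``tracking $\fq_2$ through this conjugation identifies it with a $G$-conjugate of $\fq_1$,'' is precisely the assertion that the fibre of $\res^*_{G,E_1}$ over $\fp$ meets $V^+_{E_1}$ in a single $W_G(E_1)=N_G(E_1)/C_G(E_1)$-orbit. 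That is the substantive half of Quillen's stratification theorem (the identification of the stratum with $V^+_E/W_G(E)$ up to inseparable isogeny), and it does not follow from the Mackey inclusion. The standard proof constructs separating cohomology classes via the Evens norm map to distinguish non-conjugate primes in $V^+_E$; your sketch is silent on this step entirely.
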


In order to be able to use this, we first need a version of the
``subgroup theorem'' for elementary abelian $p$-groups. The following
is Theorem~9.5 of \cite{Benson/Iyengar/Krause:bik3}, and its proof
is a fairly straightforward consequence of the Stratification 
Theorem~\ref{th:strat-kE} for elementary abelian $p$-groups.

\begin{theorem}
\label{th:subgroup-E}
Let $E'\le E$ be elementary abelian $p$-groups. If $X$ is an object
in $\KInj(kE)$ then
\[ 
\mcV_{E'}(X{\downarrow_{E'}})=(\res^*_{E,E'})^{-1}\mcV_E(X). 
\]
\end{theorem}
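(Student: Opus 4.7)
The plan is to combine Theorem~\ref{th:strat-kE} with the projection formula for the restriction--induction adjunction. Since $E$ is abelian, $kE$ is free of rank $[E:E']$ as a $kE'$-module, so restriction $(-){\downarrow_{E'}}\colon\KInj(kE)\to\KInj(kE')$ has a two-sided adjoint $\operatorname{ind}:=kE\otimes_{kE'}(-)$ satisfying the projection formula $\operatorname{ind}(Y)\otimes X\cong\operatorname{ind}(Y\otimes X{\downarrow_{E'}})$, and the Mackey isomorphism $\operatorname{ind}(Y){\downarrow_{E'}}\cong Y^{[E:E']}$ makes $\operatorname{ind}$ faithful on $\KInj$. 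The restriction map $\res^*_{E,E'}\colon H^*(E,k)\to H^*(E',k)$ is surjective (it is dual in low degree to the inclusion of $\bZ/p$-vector spaces $E'\hookrightarrow E$), so $\res^*_{E,E'}$ induces an injective closed embedding on $\Spec$; consequently, for any $Z\in\KInj(kE')$ the $H^*(E,k)$-support of $Z$ under $\res^*_{E,E'}$ equals $\res^*_{E,E'}(\mcV_{E'}(Z))$.

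For the containment $\mcV_{E'}(X{\downarrow_{E'}})\subseteq(\res^*_{E,E'})^{-1}\mcV_E(X)$, use that restriction is $H^*(E,k)$-linear and so commutes with each $\Gamma^E_\fp$. Hence $\Gamma^E_\fp(X{\downarrow_{E'}})\cong\Gamma^E_\fp(X){\downarrow_{E'}}$, which vanishes for every $\fp\notin\mcV_E(X)$. Thus the $H^*(E,k)$-support of $X{\downarrow_{E'}}$ is contained in $\mcV_E(X)$, and the support bijection transports this to $\mcV_{E'}(X{\downarrow_{E'}})\subseteq(\res^*_{E,E'})^{-1}\mcV_E(X)$.

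For the reverse containment, fix $\fq$ with $\fp:=\res^*_{E,E'}(\fq)\in\mcV_E(X)$. Canonicity of the $H^*(E',k)$-action together with the projection formula gives
\[
\operatorname{ind}\bigl(\Gamma^{E'}_\fq(X{\downarrow_{E'}})\bigr)\cong\operatorname{ind}(\Gamma^{E'}_\fq(\one_{E'}))\otimes X.
\]
Put $W:=\operatorname{ind}(\Gamma^{E'}_\fq(\one_{E'}))$; this is nonzero by faithfulness of $\operatorname{ind}$, and a further application of the projection formula together with the first containment shows $\mcV_E(W)\subseteq\{\fp\}$, so $W$ is a nonzero object of $\Gamma^E_\fp\KInj(kE)$. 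By Theorem~\ref{th:strat-kE} this latter subcategory is minimal as a tensor ideal localising subcategory, so $W$ generates it, and in particular $\Gamma^E_\fp(\one_E)\in\Loc^\otimes(W)$. Tensoring with $X$ yields
\[
\Gamma^E_\fp(X)\in\Loc^\otimes(W\otimes X)=\Loc^\otimes\bigl(\operatorname{ind}(\Gamma^{E'}_\fq(X{\downarrow_{E'}}))\bigr),
\]
and since $\fp\in\mcV_E(X)$ forces $\Gamma^E_\fp(X)\ne 0$, the induced object is nonzero. Faithfulness of $\operatorname{ind}$ now gives $\Gamma^{E'}_\fq(X{\downarrow_{E'}})\ne 0$.

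The crux is the reverse inclusion. Passing from the support containment $\mcV_E(W)\subseteq\{\fp\}$ to the claim that $W$ generates $\Gamma^E_\fp\KInj(kE)$ uses the full strength of the stratification theorem via the minimality criterion of Lemma~\ref{le:test}; without it, $W$ might conceivably generate only a proper subcategory with the same support. Once generation is in hand, the projection formula and faithfulness of induction do the work of transferring the nonvanishing of $\Gamma^E_\fp(X)$ back to that of $\Gamma^{E'}_\fq(X{\downarrow_{E'}})$.
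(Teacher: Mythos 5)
Your argument is correct, and it follows precisely the route the survey indicates. The paper gives no proof of Theorem~\ref{th:subgroup-E}, pointing instead to \cite[Theorem~9.5]{Benson/Iyengar/Krause:bik3} with the remark that the result is a ``fairly straightforward consequence'' of the stratification theorem for elementary abelian $p$-groups. You have supplied exactly that deduction: the easy inclusion comes from restriction being $H^*(E,k)$-linear (so commuting with the $\Gamma_\fp$'s), while the hard inclusion uses the projection formula, the Mackey isomorphism $\operatorname{ind}(Y){\downarrow_{E'}}\cong Y^{[E:E']}$ to see that induction reflects zero objects, and then minimality of $\Gamma_\fp\KInj(kE)$ from Theorem~\ref{th:strat-kE} to upgrade ``$W$ has one-point support'' to ``$W$ generates the whole stratum.'' You also correctly isolate the crux: without stratification, a one-point-supported $W$ need not generate $\Gamma_\fp\KInj(kE)$, and that is exactly where the theorem is needed. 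This projection-formula-plus-minimality pattern is the same one the survey itself deploys in Section~15 when passing from elementary abelian subgroups to general $G$, so your proof is consistent with the paper's method at every level.

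Two small presentational points. First, $\res^*_{E,E'}$ in the paper denotes the map on spectra $\mcV_{E'}\to\mcV_E$, whereas you also use it for the ring homomorphism $H^*(E,k)\to H^*(E',k)$; worth disambiguating. Second, the step ``$\mcV_E(W)\subseteq\{\fp\}$'' is stated rather tersely. The cleanest justification is the identity $\mcV_E(\operatorname{ind}Y)=\res^*_{E,E'}\bigl(\mcV_{E'}(Y)\bigr)$, which follows from the projection formula (giving $\Gamma^E_{\fp'}(\operatorname{ind}Y)\cong\operatorname{ind}(\Gamma^E_{\fp'}Y)$), conservativity of $\operatorname{ind}$, and the change-of-rings identification of $H^*(E,k)$-support with $H^*(E',k)$-support along the surjection; applied to $Y=\Gamma^{E'}_\fq(\one_{E'})$ this gives $\mcV_E(W)=\{\fp\}$ directly, without detouring through the first containment. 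Either way the fact is available and your proof stands.
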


Next, we need
a version of Chouinard's theorem
\cite{Chouinard:1976a} for $\KInj(kG)$, see Proposition~9.6
of \cite{Benson/Iyengar/Krause:bik3}:

\begin{theorem}\label{th:Chouinard}
An object $X$ in $\KInj(kG)$ is zero if and only if the restriction
of $X$ to every elementary abelian $p$-subgroup of $G$ is zero.
\end{theorem}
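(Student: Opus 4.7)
The ``only if'' direction is immediate. For the converse, suppose $X|_E = 0$ in $\KInj(kE)$ for every elementary abelian $p$-subgroup $E \le G$; the task is to deduce $X = 0$. Note first that restriction does carry $\KInj(kG)$ to $\KInj(kE)$, since $kG$ is free as a $kE$-module and hence injective $kG$-modules remain injective on restriction (cf.\ Lemma~\ref{le:pr=inj}). The plan is to peel $X$ apart using the recollement
\[
\KacInj(kG) \,\longrightarrow\, \KInj(kG) \,\longrightarrow\, \sfD(\Mod(kG))
\]
and to reduce the problem to the classical Chouinard theorem for modules.

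First, I claim $X$ is acyclic. The hypothesis says that $X|_E$ is null-homotopic in $\KInj(kE)$, hence acyclic as a complex of $k$-vector spaces; but the underlying complex of $k$-vector spaces of $X$ and of $X|_E$ are the same, so $X$ itself is acyclic and therefore lies in $\KacInj(kG)$.

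Next, transport along the Tate-resolution equivalence $\StMod(kG)\simeq \KacInj(kG)$ and let $M \in \StMod(kG)$ be the module corresponding to $X$. This equivalence is compatible with restriction to any subgroup, so the object $X|_E$ in $\KacInj(kE)$ corresponds to $M|_E$ in $\StMod(kE)$. The hypothesis $X|_E = 0$ in $\KInj(kE)$, combined with $X|_E \in \KacInj(kE)$, therefore gives that $M|_E$ is zero in $\StMod(kE)$, i.e., $M|_E$ is projective as a $kE$-module. Chouinard's theorem \cite{Chouinard:1976a}, which applies to arbitrary (not necessarily finitely generated) $kG$-modules, then yields that $M$ is projective, and hence $X = 0$ in $\KacInj(kG) \subseteq \KInj(kG)$.

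The only genuine work is bookkeeping: that restriction respects the three-piece structure $(\KacInj, \KInj, \sfD(\Mod))$ and that Tate resolutions behave functorially under restriction. All of this is routine, and the substantive mathematical content is packaged into the classical version of Chouinard's theorem; there is no further obstacle.
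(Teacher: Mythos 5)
Your argument is correct and is essentially the route the paper has in mind: the survey states this as ``a version of Chouinard's theorem'' citing \cite{Chouinard:1976a} and Proposition~9.6 of \cite{Benson/Iyengar/Krause:bik3}, where the reduction is exactly as you describe --- restriction detects acyclicity, so $X$ lies in $\KacInj(kG)$, and the Tate-resolution equivalence with $\StMod(kG)$ (which is compatible with restriction) converts the hypothesis into projectivity of $M{\downarrow_E}$ for all elementary abelian $E$, whence classical Chouinard for arbitrary modules finishes the proof. The only point worth making explicit is that the trivial subgroup counts as elementary abelian (otherwise the acyclicity step, and indeed the theorem itself, would fail for groups of order prime to $p$).
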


We are now ready to outline the proof of Theorem~\ref{th:strat}.
This amounts to showing that for $\fp\in\mcV_G$, the tensor
ideal localising subcategory $\Gamma_\fp\KInj(kG)$ is minimal.
For this purpose, we use the criterion of Lemma~\ref{le:test}.
Let $X$ and $Y$ be non-zero objects in $\Gamma_\fp\KInj(kG)$.
By Theorem~\ref{th:Chouinard} there exists an elementary abelian
subgroup $E_0$ such that $X{\downarrow_{E_0}}$ is non-zero.
Choose a prime $\fq_0\in\mcV_{E_0}(X{\downarrow_{E_0}})$. 
Using standard properties of support under induction and restriction,
we obtain $\res^*_{G,E_0}(\fq_0)=\fp$. So we can choose a pair
$(E,\fq)$ with $E_0\ge E$ and $\fq_0=\res^*_{E_0,E}(\fq)$, so
that the conjugacy class of $(E,\fq)$ corresponds to $\fp$ under
the bijection of Theorem~\ref{th:Quillen}. By Theorem~\ref{th:subgroup-E}
we have $\Gamma_\fq(X{\downarrow_E})\ne 0$. Since the pair $(E,\fq)$
is determined up to conjugacy by $\fp$, the object $Y\in
\Gamma_\fp\KInj(kG)$ also has $\Gamma_\fq(Y{\downarrow_E})\ne 0$.

Let $Z$ be an injective resolution of the permutation module $k(G/E)$,
as an object in $\KInj(kG)$. This has the property that 
\[ X \otimes_k Z \cong X \otimes_k k(G/E)\cong X{\downarrow_E}{\uparrow^G}. \]
Thus using Frobenius reciprocity we have 
\[ \Hom^*_{kG}(X\otimes_k Z,Y)\cong \Hom^*_{kG}(X{\downarrow_E}{\uparrow^G},Y)
\cong \Hom^*_{kE}(X{\downarrow_E},Y{\downarrow_E}). \]
Since $\Gamma_\fq\KInj(kE)$ is minimal,
using the first part of 
Lemma~\ref{le:test} in one direction shows that the right hand side
is non-zero. Using the other part in the other direction then shows that 
$\Gamma_\fp\KInj(kG)$ is minimal. Thus $\KInj(kG)$ is stratified
as a tensor triangulated category by $H^*(G,k)$.

\section{Applications}

In this section, we give some applications of the classification of
localising subcategories of $\StMod(kG)$ and $\KInj(kG)$
(Theorem~\ref{th:strat}). To illustrate
the methods, we give the proof in the case of the tensor product theorem.
The remaining proofs can be found in 
Section~11 of \cite{Benson/Iyengar/Krause:bik3}.

\subsection*{The tensor product theorem}

The tensor product theorem states that if $X$ and $Y$ are 
objects in $\StMod(kG)$, or more generally 
in the larger category $\KInj(kG)$, then
\[ \mcV_G(X\otimes_k Y)=\mcV_G(X)\cap\mcV_G(Y). \]
This was first proved by
Benson, Carlson and Rickard \cite[Theorem~10.8]{Benson/Carlson/Rickard:1996a}
for $\StMod(kG)$ 
in the case where $k$ is algebraically closed. The method of
proof was to reduce to elementary abelian subgroups and then
use the version of Dade's lemma given in Theorem~\ref{th:Dade}.

The proof of the Stratification Theorem~\ref{th:strat} does not
involve any form of Dade's lemma, and so we get a new proof of
the tensor product theorem as follows.  Since 
\[ \Gamma_\fp(X\otimes_k Y)\cong\Gamma_\fp(\one) \otimes_k X \otimes_k Y
\cong \Gamma_\fp(X)\otimes_k Y \cong X \otimes \Gamma_\fp(Y), \]
if either $\Gamma_\fp(X)$ or $\Gamma_\fp(Y)$ is zero then so is
$\Gamma_\fp(X\otimes_k Y)$. This shows that
\[ \mcV_G(X\otimes_k Y)\subseteq \mcV_G(X) \cap \mcV_G(Y). \]
For the reverse containment, suppose that $\fp\in\mcV_G(X)\cap\mcV_G(Y)$.
Thus $\Gamma_\fp(X)\ne 0$ and $\Gamma_\fp(Y)\ne 0$. It follows from
Theorem~\ref{th:strat} and $\Gamma_\fp(X)\ne 0$ that $\Gamma_\fp(\one)$ is in
$\Loc^\otimes(\Gamma_\fp(X))$, and hence that 
$\Gamma_\fp(Y)$ is in 
$\Loc^\otimes(\Gamma_\fp(X \otimes_k Y))$. Since $\Gamma_\fp(Y)\ne 0$, 
this implies that $\Gamma_\fp(X\otimes_k Y)\ne 0$.

\subsection*{The subgroup theorem}

The Subgroup Theorem~\ref{th:subgroup-E} for elementary abelian groups
was proved using the stratification theorem in that context, 
and was used in order
to prove the stratification theorem for general finite groups.
The following general version of the subgroup theorem follows in
the same way from the stratification theorem for finite groups.

\begin{theorem}\label{th:subgroup-G}
Let $H\le G$ be finite groups. If $X$ is an object in $\KInj(kG)$ then
\[ 
\mcV_H(X{\downarrow_H})=(\res^*_{G,H})^{-1}\mcV_G(X). 
\]
\end{theorem}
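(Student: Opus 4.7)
The plan is to prove the two inclusions separately, essentially repeating for a general subgroup the argument that gave the elementary-abelian version (Theorem~\ref{th:subgroup-E}). Both directions rely on the fact that restriction $(-){\downarrow_H}\colon \KInj(kG)\to\KInj(kH)$ is tensor-exact, coproduct-preserving, and $R$-linear, where $R=H^*(G,k)$ acts on $\KInj(kH)$ via the ring map $\res\colon H^*(G,k)\to H^*(H,k)$.

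For the easy inclusion $\mcV_H(X{\downarrow_H})\subseteq(\res^*_{G,H})^{-1}(\mcV_G(X))$, fix $\fq\in\mcV_H$ and set $\fp=\res^*_{G,H}(\fq)$. For any compact object $C$ in $\sfD^b(\mmod(kH))$, its induction $C{\uparrow^G}$ is compact in $\sfD^b(\mmod(kG))$, and Frobenius reciprocity gives an $R$-linear isomorphism $\Hom^*_{kH}(C,X{\downarrow_H})\cong\Hom^*_{kG}(C{\uparrow^G},X)$. If $\fp\notin\mcV_G(X)$, then localising the right-hand side at $\fp$ over $R$ yields zero; since the localisation of any $H^*(H,k)$-module at $\fq$ factors through its $R$-localisation at $\fp$, the left-hand side also vanishes after localisation at $\fq$. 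Hence $\fq\notin\mcV_H(X{\downarrow_H})$.

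For the reverse inclusion, suppose $\fp=\res^*_{G,H}(\fq)\in\mcV_G(X)$ and assume for contradiction that $\Gamma_\fq(X{\downarrow_H})=0$. Since the $H^*(H,k)$-action on $\KInj(kH)$ is canonical, $\Gamma_\fq(X{\downarrow_H})\cong\Gamma_\fq(\one_H)\otimes_k X{\downarrow_H}$. Applying induction to $G$ and using the projection formula, which is valid because induction and coinduction agree for finite-index subgroups, I obtain
\[ 0=\Gamma_\fq(X{\downarrow_H}){\uparrow^G}\cong \Gamma_\fq(\one_H){\uparrow^G}\otimes_k X. \]
The tensor product theorem, just established, then forces $\mcV_G(\Gamma_\fq(\one_H){\uparrow^G})\cap\mcV_G(X)=\varnothing$. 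To derive the contradiction it suffices to show that $\fp$ lies in $\mcV_G(\Gamma_\fq(\one_H){\uparrow^G})$.

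For this last step I would invoke the Mackey decomposition: the trivial double coset contributes $\Gamma_\fq(\one_H)$ itself as a direct summand of $\Gamma_\fq(\one_H){\uparrow^G}{\downarrow_H}$, so $\fq\in\mcV_H(\Gamma_\fq(\one_H){\uparrow^G}{\downarrow_H})$. Feeding the object $Y=\Gamma_\fq(\one_H){\uparrow^G}$ into the easy inclusion already proved gives $\res^*_{G,H}(\fq)=\fp\in\mcV_G(Y)$, the required contradiction. The main obstacle is this last step: one needs a $G$-level support witness for a prime in the fibre over $\fp$, and the availability of the tensor product theorem (and thus implicitly the stratification Theorem~\ref{th:strat}) is exactly what makes the combination of induction, Mackey, and the easy inclusion sharp enough to close the argument.
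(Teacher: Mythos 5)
Your reverse inclusion is sound: the chain ``$\Gamma_\fq(X{\downarrow_H})=0$ $\Rightarrow$ $\Gamma_\fq(\one_H){\uparrow^G}\otimes_k X=0$ (canonical action plus projection formula) $\Rightarrow$ $\mcV_G(\Gamma_\fq(\one_H){\uparrow^G})\cap\mcV_G(X)=\varnothing$ (tensor product theorem, i.e.\ stratification) $\Rightarrow$ contradiction via Mackey and the easy inclusion'' is correct and is in the spirit of how the cited papers deduce the subgroup theorem from Theorem~\ref{th:strat}. The genuine gap is in your ``easy'' inclusion. You claim that $\fp\notin\mcV_G(X)$ forces $\Hom^*_{kG}(C{\uparrow^G},X)_\fp=0$ for every compact $C$. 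That step fails: the support $\mcV_G(X)=\{\fp:\Gamma_\fp(X)\ne0\}$ is not detected by localising $\Hom^*$ against compact objects; what localisation sees is only the specialisation closure, $\bigcup_C\Supp_R\Hom^*_\sfT(C,X)=\cl(\mcV_G(X))$. Concretely, take $X=\Gamma_\fq(ik)$ in $\KInj(kG)$ for a non-maximal prime $\fq$ and $\fp=\fm$: then $\mcV_G(X)=\{\fq\}$, so $\fm\notin\mcV_G(X)$, yet for some compact $C$ one has $\Hom^*_{kG}(C,X)_{\fm}=\Hom^*_{kG}(C,X)\ne0$, since graded localisation at $\fm$ inverts only degree-zero units. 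So your contrapositive argument proves nothing about $\Gamma_\fq(X{\downarrow_H})$. (The converse half of that paragraph, deducing $\fq\notin\mcV_H(X{\downarrow_H})$ from vanishing of all $\Hom^*_{kH}(C,X{\downarrow_H})_\fq$, is fine.)

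The standard repair, and the route taken in the sources, is functorial rather than Hom-theoretic: restriction $(-){\downarrow_H}\colon\KInj(kG)\to\KInj(kH)$ is exact, preserves coproducts and is $R$-linear for $R=H^*(G,k)$, hence commutes with the functors $\Gamma_\mcV$, $L_\mcV$ and $\Gamma_\fp$ formed over $R$; combining $(\Gamma^R_\fp X){\downarrow_H}\cong\Gamma^R_\fp(X{\downarrow_H})$ with the change-of-rings comparison $\supp_R(Y)=\res^*_{G,H}\bigl(\supp_{H^*(H,k)}(Y)\bigr)$ for $Y$ in $\KInj(kH)$ (which uses Evens--Venkov finiteness of $H^*(H,k)$ over the image of $H^*(G,k)$) gives exactly $\res^*_{G,H}\mcV_H(X{\downarrow_H})\subseteq\mcV_G(X)$. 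With the easy inclusion established this way, your Mackey/projection-formula argument for the hard inclusion goes through and the proof is complete; note that this corrected version is essentially the argument the paper alludes to when it says the theorem ``follows in the same way from the stratification theorem for finite groups.''
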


\subsection*{Thick subcategories}

Theorem~\ref{th:strat} also gives a new proof for the classification
of tensor ideal thick subcategories of $\stmod(kG)$ 
(Theorem~\ref{th:thick}) and $\sfD^b(\mmod(kG))$, 
avoiding the use of the version of Dade's
lemma given in Theorem~\ref{th:Dade}.

\subsection*{Localising subcategories closed under products and duality}

We state the following theorem for $\StMod(kG)$. A similar statement
holds for $\KInj(kG)$.

\begin{theorem}
Under the bijection given in Theorem~\ref{th:localising}, the following
properties of a tensor ideal localising subcategory $\sfC_\mcV$ of
$\StMod(kG)$ are equivalent.
\begin{enumerate}
\item $\sfC_\mcV$ is closed under products.
\item $\mcV_G\smallsetminus\mcV$ is specialisation closed.
\item There exists a set $\mcX$ of finitely generated $kG$-modules
such that $\sfC_\mcV$ is the full subcategory of modules $M$
satisfying $\sHom_{kG}(N,M)$ for all $N$ in $\mcX$.
\item When a $kG$-module $M$ is in $\sfC_\mcV$ so is $\Hom_{k}(M,k)$.
\end{enumerate}
\end{theorem}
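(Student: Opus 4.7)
The plan is to establish the cycle (iii) $\Rightarrow$ (i) $\Rightarrow$ (ii) $\Rightarrow$ (iii) and treat (ii) $\Leftrightarrow$ (iv) separately by duality. The implication (iii) $\Rightarrow$ (i) is formal: for each fixed $N$ the functor $\sHom_{kG}(N,-)$ commutes with products, so the class of $M$ satisfying $\sHom_{kG}(N,M)=0$ is closed under products, and hence so is the intersection over all $N\in\mcX$.

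For (ii) $\Rightarrow$ (iii), set $\mcV'=\mcV_G\smallsetminus\mcV$ and take $\mcX$ to be a set of compact representatives generating the tensor ideal thick subcategory $\fc_{\mcV'}$ of $\stmod(kG)$ produced by Theorem~\ref{th:thick}. The inclusion of $\sfC_\mcV$ into $\{M : \sHom^*_{kG}(N,M)=0\text{ for all }N\in\mcX\}$ follows from the orthogonality property of support: for a compact $N$ the set $\supp(N)$ is closed and disjoint from $\supp(M)\subseteq\mcV$, forcing $\sHom^*_{kG}(N,M)=0$. For the reverse inclusion, suppose $\fp\in\supp(M)\cap\mcV'$, so $\Gamma_\fp(M)\ne 0$, and apply the Rickard triangle $\Gamma_{\mcV''}(M)\to M\to L_{\mcV''}(M)$ for $\mcV''=\cl(\{\fp\})\subseteq\mcV'$. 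Since $\sHom^*_{kG}(N,L_{\mcV''}(M))=0$ for all $N\in\fc_{\mcV''}$, morphisms from such $N$ into $M$ agree with those into $\Gamma_{\mcV''}(M)$. The non-zero object $\Gamma_{\mcV''}(M)$ lies in $\Loc(\fc_{\mcV''})$, and since $\fc_{\mcV''}\subseteq\mcX$ is a set of compact generators for this localising subcategory, compact generation supplies some $N\in\mcX$ with $\sHom^*_{kG}(N,M)\ne 0$.

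The implication (i) $\Rightarrow$ (ii) is the deep step, and I would handle it by invoking the costratification of $\StMod(kG)$ from \cite{Benson/Iyengar/Krause:bik4} (surveyed in Section~17): tensor ideal colocalising subcategories are classified by subsets of $\mcV_G$ via cosupport. A tensor ideal localising subcategory closed under products is simultaneously colocalising, so it is of the form $\{M : \cosupp(M)\subseteq\mcW\}$ for some $\mcW\subseteq\mcV_G$. Testing on the family $\{\Gamma_\fp(\one)\}$ with $\supp(\Gamma_\fp(\one))=\cosupp(\Gamma_\fp(\one))=\{\fp\}$ identifies $\mcW$ with $\mcV$; but the compatibility of the support and cosupport parametrisations of the same subcategory forces $\mcV$ to be closed under generalisation, equivalently, $\mcV_G\smallsetminus\mcV$ to be specialisation closed.

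Finally, (ii) $\Leftrightarrow$ (iv) follows from the duality $\supp(\Hom_k(M,k))=\cosupp(M)$: condition (iv) becomes the requirement that $\cosupp(M)\subseteq\mcV$ whenever $\supp(M)\subseteq\mcV$, and under costratification this is again equivalent to $\mcV_G\smallsetminus\mcV$ being specialisation closed. The main obstacle is (i) $\Rightarrow$ (ii): whereas the other implications reduce to compact-generation and support-orthogonality arguments once the machinery of Sections~\ref{se:loc-glob} and \ref{se:tensor} is in hand, this one genuinely requires the costratification theorem, which is substantially deeper than the stratification theorem sketched for $\StMod(kG)$ earlier in the survey.
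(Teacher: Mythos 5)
Your (iii)$\Rightarrow$(i) is indeed formal, and your (ii)$\Rightarrow$(iii) is essentially the right argument: orthogonality ($\cl(\supp N)\cap\supp M=\varnothing$ forces $\sHom^*_{kG}(N,M)=0$ for compact $N$ supported in the specialisation closed complement), plus the localisation triangle and compact generation for the reverse inclusion. One small repair: run that second half with $\mcV'=\mcV_G\smallsetminus\mcV$ itself (or take $\mcX$ to be a skeleton of all of $\fc_{\mcV'}$), since an $N$ detecting $\Gamma_{\cl\{\fp\}}(M)$ need not lie in a generating set chosen only up to thick closure. Note also that this survey contains no proof of the theorem: it refers to Section~11 of \cite{Benson/Iyengar/Krause:bik3}, which predates the costratification paper \cite{Benson/Iyengar/Krause:bik4} and gets by with the stratification theorem alone, so costratification cannot be ``genuinely required'' for (i)$\Rightarrow$(ii) as you assert.

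The genuine gap is in (i)$\Rightarrow$(ii) and in (ii)$\Leftrightarrow$(iv). The computation on which your (i)$\Rightarrow$(ii) turns, namely $\cosupp(\Gamma_\fp(\one))=\{\fp\}$, is false in general. Since the action is canonical, $\Lambda^\fp\cong\fHom(\Gamma_\fp\one,-)$, and orthogonality only gives $\cosupp(\Gamma_\fp\one)\subseteq\{\fq\mid\fq\subseteq\fp\}$; typically the cosupport contains proper generisations of $\fp$. Already in the $R$-linear model $\sfD(\bZ)$ one has $\Gamma_{(p)}\bZ\simeq\Sigma^{-1}(\bZ/p^\infty)$ and $\fHom(\mathbf{Q},\Sigma^{-1}\bZ/p^\infty)\neq 0$, so the zero ideal lies in $\cosupp(\Gamma_{(p)}\bZ)$; the same phenomenon occurs in $\StMod(kG)$ as soon as $\mcV_G$ has non-minimal primes. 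Consequently testing against the objects $\Gamma_\fp(\one)$ does not identify $\mcW$ with $\mcV$, and your sentence ``the compatibility of the support and cosupport parametrisations forces $\mcV$ to be closed under generalisation'' asserts exactly what has to be proved. The duality step has the same defect: the formal identity is $(\Gamma_\fp M)^{*}\cong\Lambda^\fp(\Hom_k(M,k))$, which yields $\cosupp(\Hom_k(M,k))=\supp(M)$; the formula you invoke, $\supp(\Hom_k(M,k))=\cosupp(M)$, is the non-formal one and is given no argument, and even granting it, passing between (iv) and (ii) again requires knowing which generisations occur in the cosupports of objects supported in $\mcV$ --- the same missing computation. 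A cosupport-based proof can be repaired (show that $\fp\subseteq\fq\in\mcV$ forces $\fp$ into the cosupport of some object of $\sfC_\mcV$, then transport back to support via objects such as $\Lambda^\fp\one$, using $\Gamma_\fp\Lambda^\fp\one\simeq\Gamma_\fp\one\neq0$), but all of that is nontrivial input from \cite{Benson/Iyengar/Krause:bik4}, whereas the direct route to (i)$\Rightarrow$(ii) is the elementary failure of the support formula for products: if $\fp\subset\fq$ with $\fq\in\mcV$ and $\fp\notin\mcV$, one exhibits a product of objects of $\sfC_\mcV$ whose support contains $\fp$ (the phenomenon visible in $\prod_n\bZ/p^n$ acquiring the generic point in its support), contradicting closure under products. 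As written, therefore, (i)$\Rightarrow$(ii) and both directions of (ii)$\Leftrightarrow$(iv) are not established.
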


\subsection*{The telescope conjecture}

Our final application is a statement about certain localising such
categories which are defined as follows.

\begin{defn}
A localising subcategory $\sfC$ of a triangulated category $\sfT$ is said
to be \emph{strictly localising} if 
there is a localisation functor $L\colon\sfT\to\sfT$
such that an object $X$ is in $\sfC$ if and only if $L(X)=0$.

A localising subcategory $\sfC$ is said to be \emph{smashing} if
it is strictly localising, and 
the localisation functor $L$ preserves coproducts.
\end{defn}

The following is the analogue for $\StMod(kG)$ and $\KInj(kG)$ of
the telescope conjecture of algebraic topology 
(Bousfield \cite{Bousfield:1979a}, Ravenel \cite{Ravenel:1984a,Ravenel:1995a}).

\begin{theorem}
A tensor ideal localising subcategory of $\StMod(kG)$ or of $\KInj(kG)$
is smashing if and only if it is generated by compact objects.
\end{theorem}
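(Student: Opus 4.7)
The plan is to use the Stratification Theorem~\ref{th:strat} to translate both conditions, being smashing and being generated by compact objects, into the same condition on the associated subset of $\mcV_G$, namely specialisation closedness.

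One direction, that compactly generated tensor ideal localising subcategories are smashing, is classical and does not require stratification: for any set $\mcX$ of compact objects in a compactly generated triangulated category, the Bousfield localisation with kernel $\Loc(\mcX)$ preserves coproducts. We invoke this as a black box.

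For the converse, Theorem~\ref{th:strat} identifies tensor ideal localising subcategories of $\KInj(kG)$ (resp.\ $\StMod(kG)$) with subsets of $\mcV_G$ (resp.\ $\mcV_G\smallsetminus\{0\}$), via $\mcV\mapsto\sfC_\mcV$. I would first check that $\sfC_\mcV$ is generated by compact objects if and only if $\mcV$ is specialisation closed: in one direction, compact objects lie in $\sfD^b(\mmod(kG))$ and have closed (hence specialisation closed) supports by Theorem~\ref{th:cVGM}(ii), and a union of closed subsets is specialisation closed; conversely, given a specialisation closed $\mcV$, Theorem~\ref{th:VGM}(viii) produces for each irreducible closed subvariety in $\mcV$ a compact object whose support realises it, and the collection of these compact objects generates $\sfC_\mcV$. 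Next I would check that $\sfC_\mcV$ is smashing if and only if $\mcV$ is specialisation closed. The easy half is clear: when $\mcV$ is specialisation closed, $L_\mcV$ as constructed in Section~\ref{se:Rickard} preserves coproducts and equals the Bousfield localisation with kernel $\sfC_\mcV$. Combining the two characterisations will give the theorem.

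The main obstacle is the remaining implication: if $\sfC_\mcV$ is smashing, then $\mcV$ is specialisation closed. To handle this, let $L$ denote the coproduct-preserving localisation with $\ker L=\sfC_\mcV$, and set $e=L(\one)$. Smashing together with the tensor triangulated structure gives $L(X)\cong X\otimes e$, and then the tensor product theorem (an application of stratification, see Section 16) yields $\sfC_\mcV=\{X\mid \supp(X)\cap\supp(e)=\varnothing\}$, so $\mcV=\mcV_G\smallsetminus\supp(e)$. It remains to argue that $\supp(e)$ is generalisation closed, equivalently that its complement $\mcV$ is specialisation closed. For this I would exploit the fact that a smashing idempotent $e$ is built as a homotopy colimit of maps between compact objects in $\sfD^b(\mmod(kG))$; since these compacts have specialisation closed support, $\supp(e)$ is a union of complements of closed subsets of $\mcV_G$ and hence generalisation closed. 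Once this is in place, the first step of the previous paragraph identifies $\sfC_\mcV$ with a compactly generated subcategory, completing the proof.
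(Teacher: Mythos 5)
The paper itself does not prove this theorem (it defers to Section~11 of \cite{Benson/Iyengar/Krause:bik3}), so your proposal has to stand on its own. Most of it does: the classical direction (compactly generated $\Rightarrow$ smashing), the identification of compactly generated tensor ideal localising subcategories with the $\sfC_\mcV$ for $\mcV$ specialisation closed, the fact that $L_\mcV$ from Section~\ref{se:Rickard} is a coproduct-preserving localisation with kernel $\sfC_\mcV$, and the reduction of the hard direction to showing that $\mcV=\mcV_G\smallsetminus\supp(e)$ is specialisation closed (via $L(X)\cong X\otimes e$, which does need a word of justification using strong dualisability of compacts and tensor-ideality of the acyclics, but is standard) are all sound and in the spirit of the actual argument.

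The genuine gap is your last step. From ``$e$ is a homotopy colimit of compact objects'' you cannot conclude that $\supp(e)$ is a union of complements of closed sets: what a homotopy colimit description gives is $\supp(e)\subseteq\bigcup_i\supp(C_i)$, i.e.\ an upper bound by a \emph{specialisation} closed set, which is the wrong direction entirely. Worse, \emph{every} object of a compactly generated category is a homotopy colimit built from compacts, yet most objects fail to have generalisation closed support --- e.g.\ $\Gamma_\fp(\one)$ has support the single non-closed point $\{\fp\}$ by Theorem~\ref{th:cVGM}(vi) --- so your argument would prove a false statement if it worked. Showing that the support of a \emph{smashing} idempotent is generalisation closed is precisely the content of the telescope conjecture, and it needs a genuine input beyond the hocolim description. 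One correct route: the $L$-local objects $e\otimes\sfT$ form a tensor ideal localising subcategory (coproduct closure is the smashing hypothesis; tensor ideality again uses strong dualisability), which is in addition closed under products, being the local class of a Bousfield localisation; by Theorem~\ref{th:strat} it equals $\sfC_{\mcV_G\smallsetminus\mcV}$, and then the unnumbered theorem in the Applications section characterising product-closed $\sfC_\mcW$ (equivalently, an argument with cosupport/costratification as in Section~\ref{sec:Costratification}, or a direct Hom-vanishing argument between $\Gamma_\fp(\one)$ and suitable local objects) forces $\mcV$ to be specialisation closed. Without some such step your proof of the implication ``smashing $\Rightarrow$ compactly generated'' is incomplete.
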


\section{Costratification}
\label{sec:Costratification}
Let $\sfT$ be a compactly generated triangulated category with small
products and coproducts. Recall that a \emph{colocalising subcategory}
of $\sfT$ is a thick subcategory that is closed under products.

If $\sfT$ is tensor triangulated then such a subcategory $\sfC$ is closed under tensor products with simple modules if and only if it is \emph{Hom closed}, in the sense that for all $X$ in $\sfT$ and all $Y$ in $\sfC$, the function object (see Section~\ref{se:tensor}) $\fHom(X,Y)$ is in $\sfC$.

The main theorem of \cite{Benson/Iyengar/Krause:bik4} classifies the Hom closed colocalising subcategories of $\StMod(kG)$ and of $\KInj(kG)$. 

\begin{theorem}
\label{th:coloc}
There is a one to one correspondence between Hom closed colocalising subcategories $\sfD$ of $\StMod(kG)$ (respectively $\KInj(kG)$) and tensor ideal localising subcategories $\sfC$ of $\StMod(kG)$ (respectively $\KInj(kG)$) given by $\sfC={^\perp}\sfD$, the full subcategory of objects $X$ satisfying $\Hom^*(X,Y)=0$ for all $Y$ in $\sfD$.
\end{theorem}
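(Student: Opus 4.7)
The plan is to deduce this statement from a \emph{costratification} theorem that mirrors the stratification Theorem~\ref{th:strat}, and then match the two classifications via orthogonality. Throughout, write $R=H^*(G,k)$.

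\emph{Step 1: cosupport.} For each $\fp\in\Spec R$, introduce a colocalisation functor $\Lambda^\fp$ on $\sfT$, obtained from the right adjoints of the localisations $L_\mcV,L_\mcW$ used to build $\Gamma_\fp=L_\mcW\Gamma_\mcV$. Define
\[ \cosupp_R(X)=\{\fp\in\Spec R\mid \Lambda^\fp X\ne 0\}. \]
A formal adjunction argument gives the key identity $\fp\in\cosupp_R(X)\Longleftrightarrow \Hom^*_\sfT(\Gamma_\fp\one,X)\ne 0$, which translates Hom-vanishing conditions against $\sfC=\Loc^\otimes(\Gamma_\fp\one\mid \fp\in\mcV)$ into conditions on $\cosupp_R$.

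\emph{Step 2: costratification.} Prove that $\sfT$ is costratified by $R$: each $\Lambda^\fp\sfT$ is a minimal Hom closed colocalising subcategory, and the assignment $\sfD\mapsto \bigcup_{Y\in\sfD}\cosupp_R(Y)$ is a bijection
\[ \{\text{Hom closed colocalising }\sfD\subseteq\sfT\} \;\longleftrightarrow\; \{\text{subsets of }\supp_R\sfT\}. \]
The proof parallels the stratification proof outlined in Sections~12--15: reduce via Quillen stratification and a dual Chouinard theorem to elementary abelian $p$-subgroups, transfer to $\KInj(A)$ via the Koszul dg algebra, apply BGG to reach the dg polynomial ring, and handle that case directly via a dual of Neeman's argument. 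The minimality criterion of Lemma~\ref{le:test} has a straightforward Hom closed analogue: $\sfD$ is minimal Hom closed colocalising iff for all nonzero $Y_1,Y_2\in\sfD$ there is a compact $Z$ with $\Hom^*_\sfT(Z,\fHom(Y_1,Y_2))\ne 0$.

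\emph{Step 3: matching.} For $\mcV\subseteq\Spec R$, let $\sfC_\mcV$ be the tensor ideal localising subcategory with $\supp_R(X)\subseteq\mcV$ for all $X\in\sfC_\mcV$, supplied by Theorem~\ref{th:strat}, and let $\sfD_\mcW$ be the Hom closed colocalising subcategory with $\cosupp_R(Y)\subseteq\mcW$ for all $Y\in\sfD_\mcW$, supplied by Step~2. Using the identity of Step~1 together with the local-global principle (Theorem~\ref{th:BIK2:7.2}) and its colocal analogue, one shows
\[ \sfC_\mcV^\perp = \sfD_{\Spec R\smallsetminus\mcV}, \qquad {}^\perp\sfD_{\Spec R\smallsetminus\mcV}=\sfC_\mcV. \]
Since every tensor ideal localising subcategory is of the form $\sfC_\mcV$ and every Hom closed colocalising subcategory is of the form $\sfD_\mcW$, the maps $\sfC\mapsto\sfC^\perp$ and $\sfD\mapsto{}^\perp\sfD$ are mutually inverse bijections, as claimed.

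\emph{Main obstacle.} The hard step is Step~2, the costratification itself. The stratification argument made essential use of Frobenius reciprocity to transport minimality from $\Gamma_\fq\KInj(kE)$ up to $\Gamma_\fp\KInj(kG)$; dualising this requires a systematic study of $\Lambda^\fp$ and its compatibility with the internal $\fHom$, and in particular a dual Chouinard statement detecting vanishing of an object in $\KInj(kG)$ by coinduction/restriction to elementary abelian $p$-subgroups. Once costratification is established, Step~3 is essentially formal: orthogonality converts the subset parametrisation of Theorem~\ref{th:strat} into the subset parametrisation of Step~2 by taking complements.
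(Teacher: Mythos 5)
Your proposal follows the same route as the paper: introduce cosupport via the right adjoint $\Lambda^\fp$ of $\Gamma_\fp$, prove costratification of $\KInj(kG)$ and then $\StMod(kG)$ by an argument parallel to the stratification theorem (reduction to elementary abelian subgroups, Koszul construction, BGG, graded polynomial rings), and then match the two subset-parametrisations through the orthogonality $\Hom^*(X,Y)=0\iff\supp_R(X)\cap\cosupp_R(Y)=\varnothing$. The survey itself gives only this outline and defers the substance to \cite{Benson/Iyengar/Krause:bik4}, so yours is a somewhat more detailed rendering of the same plan; the one place to be cautious is your ``formal adjunction'' identity $\fp\in\cosupp_R(X)\iff\Hom^*(\Gamma_\fp\one,X)\neq 0$, which is not purely formal---adjunction gives $\Hom^*(\Gamma_\fp\one,X)\cong\Hom^*(\one,\Lambda^\fp X)$, and passing from non-vanishing of $\Lambda^\fp X$ to non-vanishing of this Hom requires the stratification hypothesis (or the identification $\Lambda^\fp X\cong\fHom(\Gamma_\fp\one,X)$ together with a detection argument), not just Brown representability.
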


The proof of this theorem goes via the notions of cosupport and costratification. We explain these concepts and  fix to this end an $R$-linear tensor triangulated category $\sfT$, as in Section~\ref{se:tensor}. In addition to the axioms listed there, we assume also $\Hom(?, Y )$ is exact for each object $Y\in\sfT$; all tensor triangulated categories encountered in this work have this property.

For each prime $\fp\in\Spec R$ denote by $\Lambda^\fp$ the right adjoint of the functor $\Gamma_\fp$ which exists by the Brown representability theorem.

\begin{defn}
If $X$ is an object in $\sfT$ then the \emph{cosupport} of $X$ is
the subset
\[ \cosupp_R(X)=\{\fp\in\Spec R \mid \Lambda^\fp(X)\ne 0\}. \]
\end{defn}

Note that $\Lambda^\fp$ and $\Gamma_\fp$ provide mutually inverse
equivalences between $\Gamma_\fp(\sfT)$ and $\Lambda^\fp(\sfT)$. Thus
$\cosupp_R(X)$ is a subset of $\supp_R(\sfT)$.  

We say that the tensor triangulated category $\sfT$ is
\emph{costratified} by $R$ if for each $\fp\in\Spec R$ the category
$\Lambda^\fp(\sfT)$ is either zero or minimal among all Hom closed
colocalising subcategories of $\sfT$.

As for localising subcategories, the classification of colocalising
subcategories can be achieved one prime at a time. The way to express
this is via the following maps:
\[ \left\{\begin{gathered}
\text{Colocalising}\\ \text{subcategories of $\sfT$}
\end{gathered}\; \right\} 
\xymatrix@C=3pc {\ar@<1ex>[r]^-{{\sigma}} & \ar@<1ex>[l]^-{{\tau}}}
\left\{
\begin{gathered}
  \text{Families $(\sfS(\fp))_{\fp\in\supp_R(\sfT)}$ with $\sfS(\fp)$ a}\\
  \text{colocalising subcategory of $\Lambda^\fp(\sfT)$}
\end{gathered}\;\right\} \] 
where $\sigma(\sfS)=(\sfS\cap\Lambda^\fp(\sfT))$ and 
$\tau(\sfS(\fp))=\Coloc_\sfT(\sfS(\fp)\mid\fp\in\supp_T(\sfT))$. The following
is \cite[Corollary~9.2]{Benson/Iyengar/Krause:bik4}.

\begin{theorem}
Let $\sfT$ be an $R$-linear tensor triangulated category. If $\sfT$ is
costratified by $R$ then the maps $\sigma$ and $\tau$  
establish a bijection between the Hom closed
colocalising subcategories of $\sfT$ and the subsets of $\supp_R(\sfT)$.
\end{theorem}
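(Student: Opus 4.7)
The plan is to mimic the strategy behind Theorem~\ref{th:BIK2:3.6}, replacing $\Gamma_\fp$ with $\Lambda^\fp$ throughout and working with Hom closed colocalising subcategories in place of tensor ideal localising ones. The argument splits into three steps: establish a colocal-global principle, deduce that $\sigma$ and $\tau$ are mutually inverse bijections onto families of colocalising subcategories of the $\Lambda^\fp(\sfT)$, and finally use costratification to parametrise such families by subsets of $\supp_R(\sfT)$.

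First I would prove the colocal-global principle for Hom closed colocalising subcategories: for every $X$ in $\sfT$,
\[
\Coloc^{\fHom}_\sfT(X)=\Coloc^{\fHom}_\sfT(\{\Lambda^\fp(X)\mid \fp\in \Spec R\}),
\]
where $\Coloc^{\fHom}_\sfT$ denotes the smallest Hom closed colocalising subcategory containing the given objects. The inclusion $\supseteq$ uses only that each $\Lambda^\fp(X)$ lies in the Hom closed colocalising subcategory generated by $X$, which follows from the fact that $\Lambda^\fp$ is an exact functor preserving products and that the relevant adjunctions are compatible with the canonical $R$-action (Proposition on $\Gamma_\fp(X)\cong X\otimes \Gamma_\fp(\one)$ dualises via $\Lambda^\fp(X)\cong \fHom(\Gamma_\fp(\one),X)$). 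The reverse inclusion is the substantive content: it says that $X$ can be recovered, inside any Hom closed colocalising subcategory, from the family of its local cohomology duals $\Lambda^\fp(X)$.

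Assuming the colocal-global principle, $\sigma$ and $\tau$ are mutually inverse. For a Hom closed colocalising subcategory $\sfS$, the inclusion $\tau\sigma(\sfS)\subseteq\sfS$ is immediate since each $\sfS\cap\Lambda^\fp(\sfT)\subseteq\sfS$ and $\sfS$ is closed under the colocalising operations. The reverse inclusion is obtained by applying the colocal-global principle to each $X\in\sfS$: every $\Lambda^\fp(X)$ lies in $\sfS\cap\Lambda^\fp(\sfT)$, hence in $\tau\sigma(\sfS)$, so $X\in\tau\sigma(\sfS)$. Conversely, starting from a family $(\sfS(\fp))$ with $\sfS(\fp)\subseteq\Lambda^\fp(\sfT)$, one checks that $\sigma\tau(\sfS(\fp))=(\sfS(\fp))$ by noting that $\Lambda^\fq\tau(\sfS(\fp))$ is again a Hom closed colocalising subcategory contained in $\Lambda^\fq(\sfT)$, and using the orthogonality $\Lambda^\fq\Lambda^\fp(\sfT)=0$ for $\fp\ne\fq$ together with the equivalence $\Gamma_\fp\colon\Lambda^\fp(\sfT)\rightleftarrows \Gamma_\fp(\sfT)\colon\Lambda^\fp$ to conclude $\Lambda^\fq\tau(\sfS(\fp))=\sfS(\fq)$.

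Finally I would invoke the hypothesis that $\sfT$ is costratified by $R$. This says precisely that each $\Lambda^\fp(\sfT)$ with $\fp\in\supp_R(\sfT)$ is a minimal Hom closed colocalising subcategory of $\sfT$, so it admits only $0$ and itself as Hom closed colocalising subcategories, while $\Lambda^\fp(\sfT)=0$ for $\fp\notin\supp_R(\sfT)$. Thus a family $(\sfS(\fp))_{\fp\in\supp_R(\sfT)}$ of Hom closed colocalising subcategories of $\Lambda^\fp(\sfT)$ is nothing but a choice, for each $\fp\in\supp_R(\sfT)$, of whether $\sfS(\fp)=0$ or $\sfS(\fp)=\Lambda^\fp(\sfT)$, i.e.\ a subset of $\supp_R(\sfT)$. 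Composing this correspondence with the bijection from the second step yields the desired parametrisation of Hom closed colocalising subcategories by subsets of $\supp_R(\sfT)$.

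The main obstacle will be the first step, the colocal-global principle. Its tensor ideal localising analogue (Theorem~\ref{th:BIK2:7.2}) is proved by writing $X\cong X\otimes\one$ and filtering $\one$ through the $\Gamma_\fp(\one)$; here the dual construction goes through the function objects $\fHom(\Gamma_\fp(\one),X)$, and one must verify that the Hom closure condition is exactly what is needed to capture these duals inside a colocalising subcategory. I expect that, once set up correctly, this reduces to the formal properties of $\Gamma_\fp$ and $\Lambda^\fp$ already recorded in \cite{Benson/Iyengar/Krause:bik4}, but the bookkeeping with products versus coproducts is genuinely delicate.
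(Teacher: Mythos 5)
Your architecture — a colocal-global principle for $\Lambda^\fp$, the prime-by-prime bijection via $\sigma$ and $\tau$, then costratification to reduce families to subsets — is exactly the route of the proof in \cite{Benson/Iyengar/Krause:bik4}, which is all the survey itself offers for this statement; your steps two and three are sound. The one substantive item you leave open, the colocal-global principle, is where your proposal stops short of a proof, and you overestimate its difficulty: it is not delicate bookkeeping but a short adjunction argument riding on Theorem~\ref{th:BIK2:7.2}. Fix $X$ and let $\sfS$ be the full subcategory of all $W$ in $\sfT$ such that $\fHom(W,X)$ lies in the Hom closed colocalising subcategory generated by $\{\Lambda^\fp(X)\mid\fp\in\Spec R\}$. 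Since $\fHom(-,X)$ is exact (this is the standing assumption in Section~17), turns coproducts into products, and satisfies $\fHom(W\otimes Z,X)\cong\fHom\bigl(Z,\fHom(W,X)\bigr)$, the subcategory $\sfS$ is a tensor ideal localising subcategory of $\sfT$. It contains every $\Gamma_\fp(\one)$, because the canonical $R$-action gives $\Gamma_\fp\cong\Gamma_\fp(\one)\otimes-$ and hence, by uniqueness of right adjoints, $\Lambda^\fp\cong\fHom(\Gamma_\fp(\one),-)$, so $\fHom(\Gamma_\fp(\one),X)\cong\Lambda^\fp(X)$. Applying Theorem~\ref{th:BIK2:7.2} to the unit yields $\one\in\Loc^\otimes_\sfT(\{\Gamma_\fp(\one)\mid\fp\in\Spec R\})\subseteq\sfS$, and therefore $X\cong\fHom(\one,X)$ lies in the Hom closed colocalising subcategory generated by the $\Lambda^\fp(X)$, which is precisely the reverse inclusion you flag as the main obstacle.

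Two smaller points. First, for your step two to run, $\tau$ must be read as forming the \emph{Hom closed} colocalising subcategory generated by the family (or one must verify afterwards that the subcategory produced is Hom closed, for instance because under costratification it is the full subcategory of objects whose cosupport lies in the chosen subset); with the literal $\Coloc$ appearing in the display of Section~17, the colocal-global principle does not directly put $X$ in $\tau\sigma(\sfS)$. Second, in checking that $\sigma\tau$ is the identity you need $\Lambda^\fq$ to carry the generated subcategory into the Hom closed colocalising subcategory generated by the images of the generators; this follows because $\Lambda^\fq\cong\fHom(\Gamma_\fq(\one),-)$ preserves products and triangles and satisfies $\Lambda^\fq\fHom(W,Y)\cong\fHom(W,\Lambda^\fq Y)$, combined with $\Lambda^\fq\Lambda^\fp=0$ for $\fq\ne\fp$ and the equivalence between $\Gamma_\fp(\sfT)$ and $\Lambda^\fp(\sfT)$, as you indicate. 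With the paragraph above supplying the colocal-global principle, your proposal becomes a complete reconstruction of the cited argument.
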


From this result one deduces Theorem~\ref{th:coloc} by proving
costratification first for $\KInj(kG)$ (see
\cite[Theorem~11.10]{Benson/Iyengar/Krause:bik4}) and then for
$\StMod(kG)$ (see \cite[Theorem~11.13]{Benson/Iyengar/Krause:bik4}).

Let us include an application which justifies the study of support and cosupport; it is 
 \cite[Corollary~9.6]{Benson/Iyengar/Krause:bik4}.

\begin{theorem}
Suppose the tensor triangulated category $\sfT$ is generated
by its unit. Then $\sfT$ is stratified by $R$ if and only if for all
objects $X$ and $Y$ in $\sfT$ one has
\[
\Hom_\sfT^*(X,Y)=0\quad\iff\quad\supp_R(X)\cap\cosupp_R(Y)=\varnothing.
\]
\end{theorem}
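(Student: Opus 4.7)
The plan is to prove both implications via three ingredients: the adjunction identity $\Hom^*_\sfT(\Gamma_\fp X,Y) \cong \Hom^*_\sfT(X,\Lambda^\fp Y)$ coming from $\Gamma_\fp \dashv \Lambda^\fp$; the tensor ideal version of the local-global principle (Theorem~\ref{th:BIK2:7.2}); and the minimality criterion of Lemma~\ref{le:test}. The hypothesis that $\sfT$ is generated by $\one$ is crucial, as it forces every localising subcategory of $\sfT$ to be tensor ideal, so that $\Loc^\otimes_\sfT(X) = \Loc_\sfT(X)$ for every $X$.

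A preliminary identity I would extract is $\Hom^*_\sfT(\Gamma_\fp X, Y) \cong \Hom^*_\sfT(\Gamma_\fp X, \Gamma_\fp Y)$. Both sides equal $\Hom^*_\sfT(X, \Lambda^\fp Y)$ by the adjunction together with $\Lambda^\fp \Gamma_\fp \cong \Lambda^\fp$, which encodes the observation (recorded just after the definition of cosupport) that $\Gamma_\fp$ and $\Lambda^\fp$ restrict to mutually inverse equivalences between $\Gamma_\fp(\sfT)$ and $\Lambda^\fp(\sfT)$. In particular, $\fp \in \cosupp_R(Y)$ if and only if $\Gamma_\fp Y \neq 0$.

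For the forward direction, suppose $\sfT$ is stratified by $R$. If $\supp_R(X) \cap \cosupp_R(Y) = \varnothing$, then for every $\fp$ either $\Gamma_\fp X = 0$ or $\Lambda^\fp Y = 0$, so $\Hom^*_\sfT(\Gamma_\fp X, Y) \cong \Hom^*_\sfT(X,\Lambda^\fp Y) = 0$. Theorem~\ref{th:BIK2:7.2} places $X$ in $\Loc^\otimes_\sfT(\Gamma_\fp X : \fp) = \Loc_\sfT(\Gamma_\fp X : \fp)$, and the class of $Z$ with $\Hom^*_\sfT(Z, Y) = 0$ is a localising subcategory, so $\Hom^*_\sfT(X,Y) = 0$. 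Conversely, if $\fp \in \supp_R(X) \cap \cosupp_R(Y)$, then $\Gamma_\fp X$ and $\Gamma_\fp Y$ are nonzero objects of $\Gamma_\fp(\sfT)$, which is a minimal localising subcategory by stratification (combined with the fact that localising equals tensor ideal localising in $\sfT$); Lemma~\ref{le:test}(i) yields $\Hom^*_\sfT(\Gamma_\fp X, \Gamma_\fp Y) \neq 0$, hence $\Hom^*_\sfT(\Gamma_\fp X, Y) \neq 0$ by the preliminary identity. Since $\Gamma_\fp X \cong X \otimes \Gamma_\fp \one$ lies in $\Loc^\otimes_\sfT(X) = \Loc_\sfT(X)$, the same localising-subcategory argument forces $\Hom^*_\sfT(X, Y) \neq 0$.

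For the converse, I assume the characterization of $\Hom$-vanishing and verify stratification. Fix $\fp \in \supp_R(\sfT)$ and take nonzero $X, Y \in \Gamma_\fp(\sfT)$. Then $X = \Gamma_\fp X \neq 0$ gives $\fp \in \supp_R(X)$, while $\Lambda^\fp Y = \Lambda^\fp \Gamma_\fp Y \neq 0$ by the equivalence $\Gamma_\fp(\sfT) \simeq \Lambda^\fp(\sfT)$, so $\fp \in \cosupp_R(Y)$. By hypothesis, $\Hom^*_\sfT(X, Y) \neq 0$, and Lemma~\ref{le:test}(i) concludes that $\Gamma_\fp(\sfT)$ is minimal. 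The main point to get right throughout is the interplay between $\Hom^*_\sfT(\Gamma_\fp X, Y)$, $\Hom^*_\sfT(\Gamma_\fp X, \Gamma_\fp Y)$, and $\Hom^*_\sfT(X, \Lambda^\fp Y)$; once these identifications are in place, the hypothesis $\sfT = \Loc_\sfT(\one)$ allows one to move between localising and tensor ideal localising subcategories at will, and the rest reduces to routine invocations of Theorem~\ref{th:BIK2:7.2} and Lemma~\ref{le:test}.
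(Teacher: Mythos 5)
Your converse direction and the implication ``$\supp_R(X)\cap\cosupp_R(Y)=\varnothing\Rightarrow\Hom^*_\sfT(X,Y)=0$'' (via Theorem~\ref{th:BIK2:7.2} and the fact that every localising subcategory is tensor ideal when $\one$ generates) are correct, as is your final descent from $\Hom^*_\sfT(\Gamma_\fp X,Y)\neq 0$ to $\Hom^*_\sfT(X,Y)\neq 0$. However, your ``preliminary identity'' is false, and with it the remaining sub-implication breaks. The isomorphism $\Lambda^\fp\Gamma_\fp\cong\Lambda^\fp$ on all of $\sfT$, the identity $\Hom^*_\sfT(\Gamma_\fp X,Y)\cong\Hom^*_\sfT(\Gamma_\fp X,\Gamma_\fp Y)$, and the claim ``$\fp\in\cosupp_R(Y)$ iff $\Gamma_\fp Y\neq 0$'' do not follow from the statement recorded after the definition of cosupport: that statement only says $\Gamma_\fp$ and $\Lambda^\fp$ are mutually inverse \emph{after restriction} to $\Gamma_\fp(\sfT)$ and $\Lambda^\fp(\sfT)$, and it gives $\cosupp_R(Y)\subseteq\supp_R(\sfT)$, not $\cosupp_R(Y)\subseteq\supp_R(Y)$. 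Concretely, take $\sfT=\sfD(\Mod(\bZ))$ with its canonical $\bZ$-action (generated by its unit and stratified by $\bZ$ by \cite{Neeman:1992a}, so it satisfies the hypotheses of the theorem), and $\fp=(0)$, so that $\Gamma_{(0)}=-\otimes\mathbf{Q}$ and $\Lambda^{(0)}=\fHom(\mathbf{Q},-)$. For $Y=\bZ/p^\infty$ one has $\Gamma_{(0)}Y=0$, yet $\Hom_{\bZ}(\mathbf{Q},\bZ/p^\infty)\neq 0$ (e.g.\ $\mathbf{Q}\twoheadrightarrow\mathbf{Q}/\bZ_{(p)}\cong\bZ/p^\infty$), so $(0)\in\cosupp_{\bZ}(Y)$; and with $X=\bZ$ the two sides of your identity are $\Hom^*_\sfT(\mathbf{Q},\bZ/p^\infty)\neq0$ and $0$. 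Hence in the forward direction you cannot pass from $\fp\in\supp_R(X)\cap\cosupp_R(Y)$ to the pair $(\Gamma_\fp X,\Gamma_\fp Y)$: the object $\Gamma_\fp Y$ may vanish.

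The gap is repairable inside your own framework: replace $\Gamma_\fp Y$ by $\Gamma_\fp\Lambda^\fp Y$. Since $\Lambda^\fp Y$ is a nonzero object of $\Lambda^\fp(\sfT)$, the restricted equivalence gives $\Gamma_\fp\Lambda^\fp Y\neq0$ in $\Gamma_\fp(\sfT)$; minimality of $\Gamma_\fp(\sfT)$ and Lemma~\ref{le:test}(i) (legitimate because localising equals tensor ideal localising here) give $\Hom^*_\sfT(\Gamma_\fp X,\Gamma_\fp\Lambda^\fp Y)\neq0$; and the adjunction $\Gamma_\fp\dashv\Lambda^\fp$ together with $\Lambda^\fp\Gamma_\fp\Lambda^\fp Y\cong\Lambda^\fp Y$ --- which \emph{is} valid, since $\Lambda^\fp Y$ lies in $\Lambda^\fp(\sfT)$ --- identifies this group with $\Hom^*_\sfT(X,\Lambda^\fp Y)\cong\Hom^*_\sfT(\Gamma_\fp X,Y)$, after which your localising-subcategory argument concludes $\Hom^*_\sfT(X,Y)\neq0$. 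Note that the survey itself gives no proof of this statement but cites \cite[Corollary~9.6]{Benson/Iyengar/Krause:bik4}; the corrected form of your argument is essentially the intended one.
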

Note that $\StMod(kG)$ (respectively $\KInj(kG)$) is generated by its
unit if and only if $G$ is a $p$-group. We refer to
\cite{Benson/Iyengar/Krause:bik4} for more general results which do
not depend on the fact that $\sfT$ is generated by its unit.

\providecommand{\bysame}{\leavevmode\hbox to3em{\hrulefill}\thinspace}
\providecommand{\MR}{\relax\ifhmode\unskip\space\fi MR }
\providecommand{\MRhref}[2]{%
  \href{http://www.ams.org/mathscinet-getitem?mr=#1}{#2}
}
\providecommand{\href}[2]{#2}


\begin{thebibliography}{10}

\bibitem{Avramov/Buchweitz/Iyengar/Miller:2010a}
L.~L. Avramov, R.-O. Buchweitz, S.~B. Iyengar, and C.~Miller, \emph{{Homology
  of perfect complexes}}, Adv.\ Math. \textbf{223} (2010), 1731--1781; Corrigendum: 
  Adv.\ Math. \textbf{225} (2010) 3576--3578.
  
\bibitem{Benson:1991a}
D.~J. Benson, \emph{{Representations and Cohomology I: Basic representation theory of finite groups
  and associative algebras}}, Cambridge Studies in Advanced Mathematics, vol.~30, Cambridge
  University Press, 1991, reprinted in paperback, 1998.
  
\bibitem{Benson:1991b}
D.~J. Benson, \emph{{Representations and Cohomology II: Cohomology of groups
  and modules}}, Cambridge Studies in Advanced Mathematics, vol.~31, Cambridge
  University Press, 1991, reprinted in paperback, 1998.

\bibitem{Benson:1995a}
\bysame, \emph{{Cohomology of modules in the principal block of a finite
  group}}, New York Journal of Mathematics \textbf{1} (1995), 196--205.

\bibitem{Benson/Carlson/Rickard:1996a}
D.~J. Benson, J.~F. Carlson, and J.~Rickard, \emph{{Complexity and varieties
  for infinitely generated modules, II}}, Math.\ Proc.\ Camb.\ Phil.\ Soc.
  \textbf{120} (1996), 597--615.

\bibitem{Benson/Carlson/Rickard:1997a}
\bysame, \emph{{Thick subcategories of the stable module category}}, Fundamenta
  Mathematicae \textbf{153} (1997), 59--80.

\bibitem{Benson/Carlson/Robinson:1990a}
D.~J. Benson, J.~F. Carlson, and G.~R. Robinson, \emph{{On the vanishing of
  group cohomology}}, J.~Algebra \textbf{131} (1990), 40--73.

\bibitem{Benson/Iyengar/Krause:2008a}
D.~J. Benson, S.~B. Iyengar, and H.~Krause, \emph{{Local cohomology and support
  for triangulated categories}}, Ann.\ Scient.\ \'Ec.\ Norm.\ Sup.\ (4)
  \textbf{41} (2008), 575--621.

\bibitem{Benson/Iyengar/Krause:bik3}
\bysame, \emph{{Stratifying modular representations of finite groups}},
  Preprint, 2008.

\bibitem{Benson/Iyengar/Krause:bik2}
\bysame, \emph{{Stratifying triangulated categories}}, Preprint, 2009.

\bibitem{Benson/Iyengar/Krause:bik4} \bysame, \emph{{Cosupport and
      colocalising subcategories for finite groups}}, J.\ Reine \&\
  Angew.\ Math., To appear.

\bibitem{Benson/Iyengar/Krause:bik6}
\bysame, \emph{{Representations of finite groups: Local cohomology and support}},
Oberwolfach Seminars, In preparation.

\bibitem{Benson/Krause:2008a}
D.~J. Benson and H.~Krause, \emph{{Complexes of injective $kG$-modules}},
  Algebra \& Number Theory \textbf{2} (2008), 1--30.

\bibitem{Benson/Krause/Schwede:2004a}
D.~J. Benson, H.~Krause, and S.~Schwede, \emph{{Realizability of modules over
  Tate cohomology}}, Trans.\ Amer.\ Math.\ Soc. \textbf{356} (2004),
  3621--3668.

\bibitem{Benson/Krause/Schwede:2005a}
\bysame, \emph{{Introduction to realizability of modules over Tate
  cohomology}}, Fields Inst. Comm., vol.~45, American Math.\ Society, 2005,
  pp.~81--97.

\bibitem{Bernstein/Gelfand/Gelfand:1978a}
I.~N. Bern{\v{s}}te{\u\i}n, I.~M. Gel$'$fand, and S.~I. Gel$'$fand,
  \emph{{Algebraic vector bundles on {$\mathbb P^n$} and problems of linear
  algebra}}, Funktsional. Anal. i Prilozhen. \textbf{12} (1978), no.~3, 66--67.

\bibitem{Bousfield:1979a}
A.~K. Bousfield, \emph{{The localization of spectra with respect to homology}},
  Topology \textbf{18} (1979), 257--281.

\bibitem{Buchweitz:1986}
R.-O.~Buchweitz, \emph{Maximal Cohen-Macaulay modules and Tate-cohomology over Gorenstein rings},
preprint, Univ. Hannover 1986; http://hdl.handle.net/1807/16682.


\bibitem{Chouinard:1976a}
L.~Chouinard, \emph{{Projectivity and relative projectivity over group rings}},
  J.~Pure \& Applied Algebra \textbf{7} (1976), 278--302.



\bibitem{Happel:1988a}
D.~Happel, \emph{{Triangulated categories in the representation theory of
  finite dimensional algebras}}, London Math.\ Soc.\ Lecture Note Series, vol.
  119, Cambridge University Press, 1988.

\bibitem{Hopkins:1987a}
M.~J. Hopkins, \emph{{Global methods in homotopy theory}}, Homotopy Theory,
  Durham 1985, Lecture Notes in Mathematics, vol. 117, Cambridge University
  Press, 1987.


\bibitem{Hovey/Palmieri/Strickland:1997a}
M.~Hovey, J.~H. Palmieri, and N.~P. Strickland, \emph{{Axiomatic stable
  homotopy theory}}, Mem.\ AMS, vol. 128, American Math.\ Society, 1997.

\bibitem{Keller:1994a}
B.~Keller, \emph{{Deriving DG categories}}, Ann.\ Scient.\ \'Ec.\ Norm.\ Sup.\
  (4) \textbf{27} (1994), 63--102.

\bibitem{Krause:2005a}
H.~Krause, \emph{{The stable derived category of a noetherian scheme}},
  Compositio Math. \textbf{141} (2005), 1128--1162.

\bibitem{Neeman:1992a}
A.~Neeman, \emph{{The chromatic tower for $D(R)$}}, Topology \textbf{31}
  (1992), 519--532.

\bibitem{Neeman:coloc}
\bysame, \emph{{Colocalizing subcategories of $D(R)$}}, J.\ Reine \&\ Angew.\ Math., To appear. 

\bibitem{Quillen:1971b}
D.~G. Quillen, \emph{{The spectrum of an equivariant cohomology ring, I}},
  Ann.\ of Math. \textbf{94} (1971), 549--572.

\bibitem{Quillen:1971c}
\bysame, \emph{{The spectrum of an equivariant cohomology ring, II}}, Ann.\ of
  Math. \textbf{94} (1971), 573--602.

\bibitem{Ravenel:1984a}
D.~C. Ravenel, \emph{{Localization with respect to certain periodic homology
  theories}}, Amer.\ J.\ Math. \textbf{106} (1984), 351--414.

\bibitem{Ravenel:1995a}
\bysame, \emph{{Some variations on the telescope conjecture}}, The \v {C}ech
  centennial ({B}oston, {MA}, 1993), Contemp. Math., vol. 181, American Math.\
  Society, 1995, pp.~391--403.

\bibitem{Rickard:1989a}
J.~Rickard, \emph{{Derived categories and stable equivalence}}, J.~Pure \&
  Applied Algebra \textbf{61} (1989), 303--317.

\bibitem{Solberg:2006a} \O.~Solberg, \emph{{Support varieties for modules and complexes}}, Trends in representation theory of algebras and related topics, Contemp. Math., vol. 406, Amer. Math. Soc., Providence, RI, 2006, pp. 239--270.
\end{thebibliography}
\end{document}